\documentclass[11pt,letterpaper]{amsart}
\usepackage[english]{babel}

\usepackage[letterpaper,top=2cm,bottom=2cm,left=3cm,right=3cm,marginparwidth=1.75cm]{geometry}

\usepackage{amsmath, amsfonts, amssymb, amsthm}
\usepackage{mathtools}
\usepackage{graphicx}
\usepackage{tikz}
\usetikzlibrary{arrows.meta,calc}
\usepackage[colorlinks=true, allcolors=blue]{hyperref}

\renewcommand{\ge}{\geqslant}
\renewcommand{\le}{\leqslant}

\newcommand{\R}{\mathbb{R}}

\newcommand{\Id}{\mathrm{Id}}

\newtheorem{proposition}{Proposition}
\newtheorem*{proposition*}{Proposition}
\newtheorem{lemma}{Lemma}
\newtheorem*{lemma*}{Lemma}
\newtheorem{theorem}{Theorem}
\newtheorem*{theorem*}{Theorem}
\newtheorem{corollary}{Corollary}
\newtheorem*{corollary*}{Corollary}

\newtheorem*{conjecture*}{Conjecture}

\theoremstyle{definition}
\newtheorem{definition}{Definition}
\newtheorem*{definition*}{Definition}
\newtheorem{remark}{Remark}

\long\def\forget#1\forgotten{} 

\everymath{\displaystyle}

\title{Geodesic nets on the Euclidean plane and closed geodesic nets on Riemannian surfaces}
\author{Ivan Frolov, Denis Gorodkov, Dmitrii Korshunov, Alexander Nabutovsky}

\begin{document}

\maketitle

\begin{abstract} We prove that if $M$ is a closed Riemannian surface of diameter $d$ and area $v$ with sectional curvature in the $[-1,1]$ interval,
then a closed geodesic net of length $l$ has at most $f(l,d,v)$ branch points, where 
$f(l,d,v)=(400\bar{l})^{(180\bar{l})^4}$ for $\bar{l}=\max\{l, {\exp(d)\over \min\{1, {v\over 4}\}}\}$.


This answers a question posed by S. Becker-Kahn. We also prove that for each 
geodesic net in the Euclidean plane with
at most $n$ unbalanced (boundary) vertices such that all its unbalanced vertices have degree $1$, the number
of balanced vertices of degree $\geq 3$ (=branch points) does not exceed
$(25n)^{2n^2}$. This answers a question posed in [GM] and [NP].
\end{abstract}

\section{Introduction}
Let $M$ be a Riemannian manifold. The closed geodesic nets on $M$ are critical points of the length functional on the space of the embedded multigraphs in $M$. They can be regarded as 
a homological generalization of periodic geodesics
or as a $1$-dimensional version of stationary minimal surfaces. From the perspective of geometric measure theory, they can be regarded as especially nice stationary $1$-dimensional varifolds in $M$ (cf. a classical reference paper by
W. Allard and F. Almgren [AA]). One of many reasons to study stationary geodesic nets is that their properties are important for understanding singularities of area minimizing hypersurfaces mod $p$ (cf. [DMS] and [MS]. For example, P. Minter and S. Stanbury observe that Question $5$
in their paper [MS] is closely related to Question $5'$ in [MS] which is a problem about closed geodesic nets on the round $2$-sphere of radius $1$.)

More generally, given
a finite set (possibly empty) $U$ of points of $M$, geodesic nets with the set $U$ of unbalanced vertices are critical points
of the length functional on the space of embedded multigraphs in $M$, where the vertices
of $U$ are fixed during deformation.

There are two equivalent definitions of geodesic nets: 

\begin{definition}

A {\it (stationary) geodesic net} on $M$ is an embedded
multigraph on $M$ (that is, loops and multiple edges are allowed), where edges are allowed to have positive integer multiplicities such that
(1) All edges are nonself-intersecting geodesics on $M$; (2) Let $v$ be an arbitrary vertex of the graph which is
{\it not} in $U$. Consider the set $E_v$ of all the edges incident to $v$. For each edge, consider its unit tangent vector $t_{ev}\in T_vM$ at $v$ directed from the origin. Then the sum $I(v)=\Sigma_{e\in E_v} w(e) t_{ev}=0\in T_vM$, where
$w(e)$ denotes the (positive integer) weight of $e$. 
\end{definition}

If $v$ is an unbalanced vertex, then the vector $I(v)$ will be non-zero, and is called {\it the imbalance vector} at $v$. When $M=\mathbb{R}^n$,
and we can translate and add tangent vectors at different points of $M$, the sum of the imbalance vectors  $I(v)$ over all vertices $v$ of $G$ (including the vertices in $U$) is equal to the zero vector. Indeed,
in this sum for each edge $e=[v_iv_j]$ we are going to encounter terms $w(e)t_{ev_i}$ and $w(e)t_{ev_j}$ that will cancel each other. Denote the norm of $I(v)$ by $i(v)$.
We are going to call $i(v)$ the {\it imbalance} of the geodesic net at $v$.
By definition, $I(v)\not= 0$ if and only if $v\in U$. The sum $I$ of the imbalances $i(v)$ at all the unbalanced vertices $v$ of a geodesic net $N$ is called {\it the imbalance of the geodesic net}.

Equivalently, geodesic nets can be defined
as follows:

\begin{definition}
    A \textit{(stationary) geodesic net} on a Riemannian manifold $M$ is a map $g\colon G \hookrightarrow M$ of a finite $1$-dimensional polyhedron $G$ into $M$ where:
    \begin{itemize}
        \item The restriction of $g$ to each
        $1$-cell is an embedding; 
        \item Consider $G$ as a multigraph. The
        images of interiors of two edges 
        either do not intersect, or coincide.
        This implies that each edge in the embedded multigraph $g(G)\subset M$ has a positive integer multiplicity;
        \item There exists a (possibly) empty set $S$ of vertices of $G$ such that $M$, $g$, and $U=g(S)$ have the following property:
        For each 1-parametric flow $\Phi_t$ of diffeomorphisms of $M$ with $t\in (-\varepsilon, \varepsilon)$, $\Phi_0 = \Id$, fixing all points of $U$, $t = 0$ is the critical point of the function $l(t)$ defined as the length of $\Phi_t(g(G))$.
    \end{itemize}


\end{definition}

\begin{definition}
    Vertices in $U$ are called \textit{unbalanced} vertices, and all other vertices of the geodesic net are called \textit{balanced}. A (stationary) geodesic net without
    unbalanced vertices, ($S=\emptyset$), is called {\it closed} .
\end{definition}



We refer the reader to [NP] for an introduction 
to geodesic nets with an emphasis on geodesic nets
in the Euclidean plane. Observe that edges of a geodesic net in Euclidean space are straight line segments. There are no closed geodesic nets
in the plane or in a higher-dimensional Euclidean space. (Indeed, the furthest from the origin vertex cannot be balanced, as all tangent
vectors to incident edges are in the same half-space.)
Also, note that we can place any number of balanced vertices of degree $2$ in the interior of each edge of a geodesic net. We can also remove
each balanced vertex of degree $2$ of a geodesic net, so that the two edges incident to the removed vertex
will merge into one edge. (The only exception, when we cannot perform this operation, is the case when one of the connected components of a geodesic net is a periodic geodesic regarded as a vertex and a loop incident to this vertex.) Therefore, below we can consider only connected geodesic nets
without balanced vertices of degree $2$.

Here are the main results of the present paper.

\begin{theorem} Assume that a geodesic net in
the Euclidean plane has $N$ unbalanced vertices, and all these vertices have degree $1$. Then it has at most $(25N)^{2N^2}$ vertices of degree $>2$.
\end{theorem}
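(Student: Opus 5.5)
We bound the number of branch points through the possible edge directions and the sizes of the edge weights.

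\emph{Step 1: edges come from few tracks.} In the plane every edge is a straight segment. At a balanced vertex the weighted unit tangent vectors sum to zero. Since every unbalanced vertex has degree $1$, it is the endpoint of exactly one segment. Following an edge through balanced vertices in a ``straight-through'' manner need not be possible, so we argue with the total imbalance instead.

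\emph{Step 2: the total weight is bounded.} The first goal is a bound on the total weight $W=\sum_e w(e)$ in terms of the weights $w_1,\dots,w_N$ of the $N$ boundary edges. Take a generic unit direction $u$. For each edge, take $w(e)$ times the projection of its unit tangent onto $u$, and sum these over the edges crossing a line perpendicular to $u$. The balancing condition says this sum is invariant as the line sweeps. So the weighted flux through any line is determined by the boundary data. Bounding the weights themselves needs more, because a small flux can hide large cancelling weights.

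The key tool is convexity. The extreme points of the convex hull of the net are unbalanced vertices, as in the introduction's argument. More generally, each face of the planar subdivision that is not adjacent to the boundary must be convex at balanced vertices, or at least have controlled angles there.

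\emph{Step 3: counting vertices by induction on $N$.} The plan is to induct on $N$. Consider the unbounded faces; there are at most $N$ of them, since the rays to infinity are replaced by degree-$1$ edges. Cut the net along a suitable line, or delete a boundary edge and propagate, to reduce to nets with fewer boundary vertices. Each new "boundary" vertex created by the cut must have controlled imbalance. The bound $(25N)^{2N^2}$ suggests a recursion of the form
\[
f(N)\le (cN)^{cN}\, f(N-1),
\]
or a count in which each of at most $N^2$ pairwise interactions of boundary edges contributes a factor $25N$.

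The concrete version I would try is the following. Every edge direction is determined by a weighted combination of the $N$ boundary directions with integer coefficients, each bounded by the total weight. The number of possible directions is therefore at most $(\text{poly}(N))^{N}$. Lines in these directions through the boundary points then bound the vertex set as a subset of an arrangement. This gives $(25N)^{O(N^2)}$ intersection points.

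\emph{Main obstacle.} The hardest step is proving that the edge weights, and hence the integer coefficients expressing edge directions, are bounded by a function of $N$ alone. Nothing prevents the weights a priori. One needs that multiplicities cannot grow in the interior, presumably by showing that a weighted edge of weight $w$ forces about $w$ boundary edges. I would attack this by looking at a maximal-weight edge, extending it to a line, and using flux conservation across that line to get weight at most $N\max_i w_i$. After normalizing the boundary weights, this gives $W\le \mathrm{poly}(N)$. Normalizing the boundary weights is itself delicate, and it is where I expect the argument to need the most care.
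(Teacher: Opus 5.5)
Your proposal is not yet a proof. The step you call the main obstacle is not a technical lemma; it is the whole theorem in a much stronger form. Weights are positive integers, so $W=\sum_e w(e)$ is at least the number of edges, which is at least $\frac{3B}{2}$ if the $B$ balanced vertices are branch points. Hence $W\le \mathrm{poly}(N)$ would give a \emph{polynomial} bound on branch points, far stronger than $(25N)^{2N^2}$.

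The mechanism you propose cannot deliver this. An edge lying on a line contributes nothing to the flux across that line. For a transversal line, the flux $\sum w(e')\langle t_{e'},u\rangle$ is a signed sum in which large weights can cancel. What is true is that each individual weight is at most $N/2$, by monotonicity or Corollary 2 (the other edges at a balanced vertex must carry total weight at least $w(e)$). But a bound on the maximal weight says nothing about $W$ unless you already bound the number of edges, which is circular. Also, in the theorem the degree-$1$ boundary vertices carry weight $1$, so there is nothing to normalize.

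The remaining steps have independent problems:
\begin{itemize}
\item Balancing controls only the sum of weighted tangents over all edges crossing a closed curve. That isolates a single edge only when the edge is a bridge, so nothing forces interior edges of a net with cycles to point in directions that are integer combinations of boundary directions.
\item Vertices not adjacent to the boundary need not be intersection points of lines through boundary vertices. You would need iterated arrangements of a priori unbounded depth.
\item Cutting along a line to induct on $N$ creates a number of new boundary vertices that is not controlled by $N$. Only the signed flux across the line is controlled.
\end{itemize}

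The missing idea is to pass to the dual complex and run a quantitative rigidity argument there.
\begin{itemize}
\item The dual of the (extended) net is a convex polygon with $p=N$ unit sides. It is tiled by convex polygons with integer sides, one tile per balanced vertex.
\item The isoperimetric inequality bounds its area by $p^2/4\pi$, so all but boundedly many tiles are thin.
\item Counting almost straight $\varepsilon$-paths with $\varepsilon=1/p$ gives two vertices $A$, $C$ joined by more than $0.15|E|^{1/p^2}$ vertex-disjoint paths of the same integer length $l<p$ (Lemma 5).
\item Set $c=l-|AC|$. Each path turns by a total of at least $\sqrt{2c/l}$ (Lemma 6). So the region between $\gamma_{i-1}$ and $\gamma_{i+1}$ has area $\gtrsim\sqrt{c/p}$.
\item All paths stay within $\sqrt{2pc}$ of the line $(AC)$ (Lemma 7), so the whole bundle has area $\lesssim p^{3/2}\sqrt{c}$.
\item The uncontrolled quantity $\sqrt{c}$ cancels. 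The number of paths is therefore $O(p^2)$, and hence $|E|<(25p)^{2p^2}$.
\end{itemize}
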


This theorem answers a question that was independently posed in [GM] and in a more general form in [NP]: Assume that unbalanced vertices in a geodesic net in the Euclidean plane can have
arbitrary degrees. Can one estimate the number
$B$ of balanced vertices in terms of the number $U$ of unbalanced vertices, and the total imbalance $I$ of the net? Recall that the imbalance of each vertex $v$ is the norm
of the sum of the unit vectors directed from $v$ along the incident edges and taken with positive integer multiplicities equal to the weights of the edges. The imbalance of the whole
net is the sum of imbalances of all unbalanced vertices.
It was shown in [NP] that this problem easily reduces
to the particular case, when all unbalanced
vertices have degree $1$ as in the previous theorem. Moreover, one can reduce it to the case when all unbalanced vertices are vertices of a convex polygon in the plane, and the geodesic net is contained inside this polygon. The idea is very simple: One can balance each unbalanced vertex $v$ by adding two rays, if $i(v)<1$, or $\lceil i(v)\rceil$
rays for $i(v)\geq 1$. Then we extend these
rays towards infinity, and cut them at the intersection with a circle of a very large radius. It is clear that now all unbalanced vertices have degree one, and the number $p$
of these vertices will not exceed $I+2U\leq 2(I+U)$. This (integer) number will be called
the {\it extended imbalance} of the geodesic
net in the Euclidean plane.

As a corollary, we obtain the answer for the question posed at [NP]:

\begin{corollary} Consider a geodesic net in the Euclidean plane with $U$ unbalanced vertices, $B$ balanced vertices , none of which has degree $2$, and imbalance $I$.
Then $B\leq (50(I+U))^{8(I+U)^2}$.
\end{corollary}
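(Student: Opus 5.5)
We reduce the corollary to Theorem 1 using the extension construction described just before the statement. Let the net have $U$ unbalanced vertices, $B$ balanced vertices (none of degree $2$), and imbalance $I$. At each unbalanced vertex $v$ we add new edges emanating from $v$ so that $v$ becomes balanced:
\begin{itemize}
\item if $i(v)\ge 1$, we add $\lceil i(v)\rceil$ unit vectors summing to $-I(v)$; this is possible because a sum of $k$ unit vectors can be any vector of norm at most $k$, and $k=\lceil i(v)\rceil\ge i(v)$;
\item if $0<i(v)<1$, we add two unit vectors summing to $-I(v)$.
\end{itemize}
We take the new edges in directions not already used at $v$. If a new direction does coincide with an existing edge direction, we merge the weights instead, which only lowers the count. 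Each added direction is extended to a ray toward infinity. Choosing directions generically, we may also assume that no new ray passes through an existing vertex and that the rays meet in general position.

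Next, fix a circle of radius $R$ much larger than the diameter of the net and cut each ray where it meets the circle. The ray endpoints on the circle become degree-one unbalanced vertices, and their number is
\[
N\le \sum_v \max\{2,\lceil i(v)\rceil\}\le \sum_v (i(v)+2)= I+2U\le 2(I+U).
\]

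Each transverse crossing of two segments (ray with ray, or ray with an old edge) must be declared a vertex so that the result is again an embedded multigraph. Such a crossing has degree $4$ and is balanced, since it is two straight lines crossing. This creates new branch points but never removes old ones. The old balanced vertices keep their degree $\ge 3$, and the formerly unbalanced vertices are now balanced. A formerly unbalanced vertex can have degree exactly $2$ only if it had degree $1$ with $i(v)=1$ and we added a single opposite ray; such vertices are simply not counted. The original $B$ balanced vertices are therefore among the vertices of degree $>2$ of the new net, and Theorem 1 gives
\[
B\le (25N)^{2N^2}\le (50(I+U))^{8(I+U)^2},
\]
using $N\le 2(I+U)$ and monotonicity of $x\mapsto (25x)^{2x^2}$.

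The one subtle point is degeneracy: a ray could pass exactly through an existing vertex, or an added edge could overlap an old edge. Both are avoided by choosing directions generically. When $\lceil i(v)\rceil$ unit vectors are required to sum to $-I(v)$, the admissible configurations form a positive-dimensional family whenever there are at least two vectors. In the one-vector case the direction is forced, but then any overlap with an existing edge merges weights as above, and any passage through a vertex is harmless because that vertex stays balanced (two opposite ray directions are added there). The argument also uses only the inequality $\max\{2,\lceil i\rceil\}\le i+2$ and the elementary estimate $I+2U\le 2(I+U)$.
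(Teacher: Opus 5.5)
Your reduction is the paper's: balance every unbalanced vertex by rays, truncate on a large circle, and apply Theorem 1 with $N\le I+2U\le 2(I+U)$. Your treatment of crossings and the final arithmetic are fine. The step that fails is the genericity argument.

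You claim that $k\ge 2$ unit vectors with prescribed sum $-I(v)$ form a positive-dimensional family. That is false in two cases. For $k=2$ the unordered pair is unique. When $i(v)=k$ is an integer, equality in the triangle inequality forces all $k$ vectors to equal $-I(v)/k$. So whenever some $i(v)$ is an integer $k\ge 2$ (e.g.\ an unbalanced vertex of degree $1$ whose edge has weight $2$), attaching exactly $\lceil i(v)\rceil$ unit vectors necessarily produces a single ray of weight $k$. Its endpoint on the circle is then an unbalanced vertex of imbalance $k$.

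Theorem 1 does not cover such a net. Its hypothesis ``degree $1$'' must be read with multiplicity, i.e.\ a single edge of weight $1$, which is exactly what Theorem 4(2) guarantees. The reason is that the bound comes from Theorem 3 applied to the dual polygon, whose perimeter is the total weight of the boundary edges, not their number. Your remark that merging ``only lowers the count'' hides exactly this: merging lowers the number of endpoints, not the total boundary weight.

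One repair is the choice made in Section 3.1: for $i(v)>1$ attach $\lfloor i(v)\rfloor+1$ rays. This differs from $\lceil i(v)\rceil$ only when $i(v)=k\ge 2$ is an integer. In that case $k+1$ unit vectors with sum of norm $k$ do admit a positive-dimensional family of configurations with pairwise distinct directions. Since $\lfloor i(v)\rfloor+1\le i(v)+1$, you still get $p<I+2U\le 2(I+U)$.

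Directions remain forced when $0<i(v)<2$, so rays from different vertices can still accidentally overlap along a common line. The cleanest way to handle all such coincidences at once is to apply Theorem 3 directly to the dual complex of the truncated net. That complex is a flat disc bounded by a convex polygon with integer sides. Its perimeter equals the number of unit vectors you added, which is at most $I+2U$, and this yields the stated bound.
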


Another question that we are going to address
was posed by Spencer Becker-Kahn ([BK]) ten years ago, and became well-known among specialists in min-max theory: Let $M$ be a closed Riemannian manifold. Is there a function $f_M(x)$ such that for each $l$ given a closed (stationary) geodesic net of length $\leq l$, the number of its branch points (=vertices of degree $>2$) does not exceed $f_M(l)$?

Until now the answer was not known even in the case when $M$ is a round $2$-sphere. (This question was also posed by Spencer Becker-Kahn.) In this paper we provide an answer
valid for all closed surfaces.

\begin{theorem}
Let $M$ be a closed Riemannian surface.
Assume that it is
scaled so that its Gaussian curvature is between $-1$ and $1$. Denote the diameter of $M$ by $d$ and its area by $v$.
Then each closed stationary geodesic net on $M$ of length $l\geq \tilde c(M)={\exp(d)\over\min\{1, {v\over 4}\}}$ has 
at most $(400l)^{(180l)^4}$ vertices of degree $>2$ (that is, branch points). If $l\leq \tilde c(M)$,
then the number of vertices of degree $>2$ does not exceed $(400\tilde c(M))^{(180\tilde c(M))^4}$.

\end{theorem}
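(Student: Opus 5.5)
The plan is to reduce Theorem 2 to the planar Theorem 1 by cutting the surface into small, nearly Euclidean disks and counting branch points disk by disk.

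\textbf{Cover.} Fix a radius $r$, a small absolute constant below the injectivity radius scale. The lower bound on injectivity radius comes from Klingenberg/Cheeger-type estimates: $|K|\le 1$ together with area $v$ and diameter $d$ control the injectivity radius from below by roughly $\min\{1,v/4\}e^{-d}$. This is exactly why $\tilde c(M)$ appears, and why the bound is stated in terms of $\bar l=\max\{l,\tilde c(M)\}$. Cover $M$ by geodesic disks $D_i$ of radius $r\sim 1/\bar l$, of which there are at most $O(v/r^2)$ by a packing argument with Bishop--Günther volume comparison.

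\textbf{Localize.} In each disk the net $N\cap D_i$ is a geodesic net whose unbalanced vertices are its intersection points with $\partial D_i$. I would first perturb the radius of $D_i$ slightly so that $\partial D_i$ meets $N$ transversally, away from vertices. By the coarea formula, or simply averaging over radii in $[r,2r]$, the number of such crossing points is at most $C\,l/r$ for a good choice of radius. Each crossing point is of degree $1$ in the localized net. Because a closed net has every vertex balanced, the local net has at most $N_i\lesssim l/r\sim \bar l^2$ boundary points, and a more careful choice gives roughly $O(\bar l^2)$. Choosing $r$ on the order of $1/\bar l$ is meant to match the exponent $(180\bar l)^4=(N_i)^2$ with $N_i\sim \bar l^2$.

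\textbf{Transfer to the plane.} This is the main obstacle. Theorem 1 is for straight-line nets in $\R^2$, whereas in $D_i$ the edges are geodesics and balancing holds with respect to the metric. I would try to use a robust form of the proof of Theorem 1. Its combinatorial mechanism presumably bounds branch points via the finitely many directions achievable by sums of boundary unit vectors, with edge directions changing at vertices in a controlled way. In exponential coordinates on a disk of radius $r\ll 1$ with $|K|\le 1$, geodesics deviate from straight lines in direction by $O(r)$ times their length, and the metric is $O(r^2)$-close to Euclidean. The hope is that the argument of Theorem 1 tolerates perturbations of edge directions of size smaller than the minimal angular gaps it uses, and those gaps are bounded below by a function of $N_i$. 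If that argument is not robust, the fallback is to rerun its induction directly in normal coordinates, tracking the accumulated angle errors along edges. Since every edge in $D_i$ has length $\le 4r$, the total turning is $O(r^2)$.

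\textbf{Sum.} Multiply the per-disk bound $(25N_i)^{2N_i^2}$ by the number of disks, then absorb constants to obtain $(400\bar l)^{(180\bar l)^4}$. Branch points lying on disk boundaries are excluded by the transversality choice, so every branch point is counted in some disk.
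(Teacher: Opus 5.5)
Your outer structure (a lower bound on the convexity radius from $|K|\le1$, $d$, $v$; a coarea choice of $r_*$ giving at most $p=2l/r\sim l^2$ boundary crossings; a packing count; multiplication) is exactly the paper's reduction of Theorem 2 to a local statement. The step you label the main obstacle, however, is where the entire difficulty lies, and the mechanism you hope for is not there.

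Theorem 1 cannot be invoked on a curved disk. Its proof also has no angular gap bounded below by a function of $N_i$ under which a fixed $O(r^2)$ error could hide. The proof passes to the dual complex (one convex integer-sided cell per balanced vertex) and extracts $R\gtrsim|E|^{1/p^2}$ vertex-disjoint unit-edge paths of equal integer length $l$ from $A$ to $B$. It then plays a lower area bound $\gtrsim R\sqrt{c/p}$ (Lemma 6) against an upper bound $\lesssim p^{3/2}\sqrt c$ (Lemma 7), where $c=l-|AB|$. Both sides are proportional to $\sqrt c$, and $c$ (and with it the turning angles along the paths) can be arbitrarily small.

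On a surface the dual complex has angular defects $\int_F K$ at its vertices, and their total $\mu$ enters additively. Corollary 8 gives only $\sum_i|\theta_i|\ge\sqrt{2c/l}-\mu$, and Lemma 9 only confines the paths to within distance $\max\{\sqrt{\frac32pc+\frac{p^2\mu^2}2},\,p\mu\}$ of a geodesic. With only the a priori bound $\mu=O(r^2)$ (equivalently, your $O(r^2)$ accumulated turning in normal coordinates), both estimates become vacuous as $c\to0$. So neither the black-box transfer nor your fallback yields any bound on $R$. Your guess that Theorem 1 works by counting finitely many achievable directions is also not how its proof goes.

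The missing idea is to show that, on the thin part of the dual complex, the curvature error scales with the same degenerating parameter. The paper does this in Lemma 10 in three steps:
\begin{itemize}
\item It proves that the faces of the net dual to interior vertices of the thin sub-disk $D$ lie in a strip of $B_a(r_*)$ of width $O(p^2r_*\delta)$, measured across the nearly parallel edges. Here $\delta$ bounds how far the angles along $\partial D$ deviate from $0$ or $\pi$. This needs a separate estimate for orthogonal projections of broken geodesics with small total turning (Lemma 11) and a self-consistent Gauss--Bonnet bookkeeping.
\item The strip bound yields $\mu\le4000\,l^2\delta$.
\item A shortening argument inside a slightly larger $D'\supset D$ gives $\delta\le2\sqrt2\sqrt c$, so $\mu\lesssim l^2\sqrt c$.
\end{itemize}
This is still far larger than $\sqrt{2c/p}$. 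So one must also pigeonhole the paths into about $20000\,l^2\sqrt p$ consecutive groups to find a sub-strip with $\mu\le\sqrt{c/2p}$, and apply the polyhedral area bounds there (Corollaries 8 and 9). The regime of non-small $c$ is treated separately, using $\mu\le4r^2$ and a finer partition. Without these steps your per-disk bound $(25N_i)^{2N_i^2}$ is unsupported, and the final summation has nothing to multiply.
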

\par\noindent
{\bf Remarks.} 1. If the sectional curvature of $M^2$ is between $-x^2$ and $x^2$ for a positive $x$, we can 
rescale $M^2$ by a factor of $x$ without changing the number of branch points
of geodesic nets. 
If $l\geq {\exp(xd)\over\min\{1, {vx^2\over 4}\}}$, then
the number of branch points of a closed geodesic net of length $\leq l$ in $M$ does not exceed
$(400lx)^{(180lx)^4}$.
\par\noindent
2. If $M$ is {\it not} diffeomorphic to the torus or the Klein bottle, the Gauss-Bonnet theorem implies
that the area of $M$ is greater than $4$, and $\tilde c(M)=\exp(d)$.


\section {Main ideas and plan of the paper.}


\subsection{Geodesic nets in the plane}
In the next section, we reduce Theorem 1 and Corollary 1 to the following partition problem:

\par\noindent
{\bf Problem:}
Let $G$ be a plane polygon of perimeter $p$ such that all its side lengths are integer. Assume that it is partitioned into convex polygons with integer side lengths. Is it true that the number of polygons in the partition does not
exceed $f(p)$ for some function $f$?

It turns out that $f(N)$ is an upper bound for the number of balanced vertices of degree $>2$ in Theorem 1. Suppose that $p$ is the extended imbalance of the extended geodesic nets. Recall that $p\leq I+2U\leq 2(I+U)$. Then $f(p)\leq f(2(I+U))$ is also an upper bound for the number of balanced vertices of degree $>2$ in Corollary 1. Therefore, Theorem 1 and Corollary 1 follow from the following theorem:

\begin{theorem} A plane polygon $P$ with perimeter $p$ and integer sides cannot be partitioned into more than $(25p)^{2p^2}$ convex polygons with integer sides.

\end{theorem}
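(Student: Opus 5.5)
The plan is to prove an \emph{area lower bound} for every piece of the partition and divide it into the area of $P$, which is at most $p^2/(4\pi)$ by the isoperimetric inequality. A convex polygon all of whose sides have length at least $1$ can have small area only if it is thin, that is, only if it has an angle close to $0$ or close to $\pi$. So everything reduces to controlling the \emph{directions} of edges in the partition. Let $D$ be the set of directions, taken modulo $\pi$, of all edges of all pieces. I will show that $D$ is finite with $|D|$ bounded in terms of $p$, and that distinct directions in $D$ differ by an angle $\alpha\ge (25p)^{-cp}$ for a suitable constant $c$.

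First, the easy bounds. Every piece is a convex polygon inside $P$, so its perimeter is at most $p$. Its sides have integer length at least $1$, so each piece has at most $p$ sides. The boundary of $P$ has at most $p$ sides, with directions $u_1,\dots,u_k$ where $k\le p$.

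The key step, and the one I expect to be the main obstacle, is to express every direction in $D$ algebraically in terms of $u_1,\dots,u_k$. Each piece $Q$ with unit side vectors $e_j$ and integer side lengths $n_j$ satisfies the closing relation $\sum_j n_j e_j=0$ with $\sum_j n_j\le p$. Summing these relations over the pieces, interior edges cancel, since each appears twice with opposite orientations. The same cancellation holds for any subfamily of pieces, where the interior edges of the subfamily cancel. I would order the pieces by a sweep, for instance by peeling off pieces adjacent to the current outer boundary, so that every piece shares an edge with the region already swept. The intended invariant is that the unknown directions are determined by relations with integer coefficients bounded by $p$, and that the boundary of the region swept so far always has total length at most $p$. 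This bound holds because that boundary is a curve inside $P$ made of segments of integer length whose total can be compared with $p$ using convexity of the pieces; pinning down this comparison is the delicate point. If it holds, the direction vectors solve a system of at most $p$ unit-vector equations with integer coefficients of size at most $p$. A Liouville- or Mahler-type separation bound for such algebraic configurations should then give $|D|\le p^{2}$ and pairwise angles at least $(25p)^{-2p}$. If the sweep fails, the fallback is to argue by induction on $p$. I would cut $P$ along a chain of interior edges joining two boundary points, chosen so that both halves have integer perimeter smaller than $p$. The difficulty here is to show that such a cut of bounded length exists.

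Granting the angle gap $\alpha$, every angle of every piece lies in $[\alpha,\pi-\alpha]$. Take a piece, a side of length at least $1$, and the two adjacent sides, also of length at least $1$. The triangle spanned by a vertex and its two neighbours has area at least $\tfrac12\sin\alpha$, and this triangle lies inside the piece by convexity. Hence each piece has area at least $\tfrac12\sin\alpha\ge \tfrac14\alpha$. The number of pieces is then at most
\[
\frac{p^2/(4\pi)}{\alpha/4}\le p^2(25p)^{2p}\le (25p)^{2p^2},
\]
with a great deal of room to spare. That slack can absorb a weaker separation estimate, for example $\alpha\ge(25p)^{-p^2}$, which is what a cruder determinant bound on a $p\times p$ integer system would give.
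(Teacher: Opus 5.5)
\textbf{There is a genuine gap.} Your argument rests on a claim that is false: no angle gap $\alpha$, and no lower bound on the area of a piece, depends only on $p$.

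Take $P$ to be the parallelogram spanned by $m e_1$ and $e_2$, where $e_1,e_2$ are unit vectors at angle $\theta$. It has integer sides and perimeter $p=2m+2$, and it is partitioned into $m$ unit rhombi, each of area $\sin\theta$. Let $\theta\to 0$ with $p$ fixed. Already the boundary directions $u_1,\dots,u_k$ become arbitrarily close; for $m=1$ this is a single rhombus. So no bound $\alpha\ge(25p)^{-cp}$ can hold, and dividing $p^2/(4\pi)$ by a per-piece area bound cannot control the number of pieces.

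The example also shows why the Liouville/Mahler step cannot work: the $u_i$ are arbitrary reals, so there is no height to feed into a separation estimate. The sweep invariant and the existence of a short cut in your fallback are also unsupported, but those problems are secondary.

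Thin pieces are not an exceptional case you can rule out. When the number of pieces is large, almost all of them are thin, meaning they are $\varepsilon$-polygons with every angle within $\varepsilon$ of $0$ or $\pi$. The scale of this thinness can be arbitrarily small.

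The missing idea is a comparison in which this uncontrolled scale cancels. The paper proceeds as follows.
\begin{itemize}
\item All but fewer than $p^2/\varepsilon$ vertices are $\varepsilon$-vertices, and $\varepsilon$-paths have fewer than $p$ edges.
\item From this, Lemma 5 gives two vertices $A$ and $C$ joined by $R>0.15|E|^{1/p^2}$ vertex-disjoint $\varepsilon$-paths of unit edges, all of the same integer length $l$.
\item Set $c=l-|AC|$, which may be arbitrarily small. By Lemma 6 each path has total turning $\sum|\theta_i|\ge\sqrt{2c/l}$. Hence the region between the two extreme paths has area at least a constant times $R\sqrt{c/p}$.
\item By Lemma 7 every path stays within $\sqrt{2pc}$ of the line $(AC)$. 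Hence the same region lies in a rectangle of area less than $2\sqrt{2}\,p^{3/2}\sqrt{c}$.
\end{itemize}
The factor $\sqrt{c}$ cancels, which gives $R=O(p^2)$ and therefore $|E|<(25p)^{2p^2}$. Your proposal has nothing that plays the role of this cancellation.
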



The ideas behind the reduction of Theorem 1 to Theorem 3
will be explained in sections 2.1 and 2.2. Here is a brief description: balance each unbalanced vertex $v$ by adding $\lceil i(v)\rceil$ or $\lceil i(v)\rceil+1$ rays of degree $1$. Extend these rays to infinity. Cut
the resulting geodesic net at a circle $C$ of a very big
radius. We will obtain a geodesic net where all unbalanced vertices $v$ have imbalances equal to one,
the total imbalance does not exceed $p=I+2U$, all unbalanced vertices are on $C$, and the net is in the disk bounded by $C$.

Now for each vertex consider vectors perpendicular to
all incident edges of length equal to the weight of the considered edge. As the sum of these vectors is equal to $0$, we can build a convex polygon formed by these new vectors (in the same cyclic order as the original vectors). As each edge $e$ of the net is incident to two vertices, we
have two polygons were one of the edges corresponds to $e$. We can glue all convex polygons by identifying
the corresponding edges. If done properly, the vertices
of the new complex correspond to the (convex) faces of the original net. The fact that the sum of outer angles of each face is equal to $2\pi$, implies that the sum of the angles incident to each vertex of the new complex
is equal to $2\pi$. This implies that the resulting (metric) dual complex is flat and embeds in the Euclidean plane. Its boundary will be a convex polygon
with at most $p$ sides each of which is perpendicular to one of the infinite rays of original geodesic net.

To prove Theorem 3, we observe that the isoperimetric inequality implies an upper bound 
${p^2\over 4\pi}$ for the area of the ambient polygon. If it is partitioned into a large number of convex polygons with integer sides, then almost all of
these polygons have a very small area. This is possible only if these small area polygons
have
an even number of sides $2k$, two
very small angles at two opposite vertices, and
the rest of the angles that are almost equal to $\pi$. Looking from a different perspective,
for almost all vertices $v$ inside the ambient polygon, the angle $2\pi$ at $v$ is
divided by the incident edges into two angles almost equal to $\pi$ and one or several
very small angles. 

We can use these observations to demonstrate
that if the number of polygons in the partition is large, then there exist two vertices $A$ and $C$ in the plane graph $G$ formed by the polygons in the partition that are connected by a large number $N$ of vertex-disjoint paths, so that all edges of these paths are almost parallel to each other and to the straight line $(AC)$. (All angles between incident edges on each of these paths are almost equal to $\pi$.) The number $N$ would behave as $\vert E\vert^{1\over p^2}$, where $E$ denotes the number of all edges of $G$. All angles
at all vertices along these paths are close to $\pi$. If we knew that at least one angle along
each of these paths is less than $\pi-\alpha$ or greater than $\pi+\alpha$ for some $\alpha>0$,
then we would know that the area of all polygons of the subdivision adjacent to each of at least $N$ vertices with the outer angle with the absolute value greater than $\alpha$
is at least $\sim \alpha$, and the total area of the polygon formed by the two outer
paths from $A$ to $C$ is at least $\sim \alpha N$. 

All paths connecting $A$ and $C$ have the same
integer length $l$ slightly larger than $|AC|$.
The key idea is to use $c=l-|AC|$ to derive a lower bound of the form $c_1(p)\sqrt{c}$ for $\alpha$ in the previous paragraph that leads to the lower bound $c_2(p)\sqrt{c}N$ for the area of the polygon between the outer paths (see Lemma 6 and Corollary 6). Indeed, if all angles along the path were almost equal to $\pi$, then 
$l$ would be almost equal to $|AC|$, so it is not surprising that such an estimate must exist.

At first glance, this estimate is useless, as we do not have any control over $c$ which potentially can be arbitrarily small. But we combine this estimate with the observation
that all $N$ paths from $A$ to $C$ are $const(p)\sqrt{c}$-close to the segment $[AC]$ (Lemma 7).
Indeed, otherwise one of the vertices, $D$, on one of the paths was at a distance $>const(p) \sqrt{c}$ for an appropriate $const(p)$
from $[AC]$. If so, the sum of the distances $|AD|+|DC|$ that cannot exceed $l=|AC|+c$ would 
be greater than $|AC|+c$. Thus, the polygon between the outer paths among $N$ paths connecting $A$ and $C$ is contained in the
rectangle with side lengths $|AC|<l<p$ and $2const(p)\sqrt{c}$, and, therefore, its area is at most $c_3(p)\sqrt{c}$. Comparing this upper bound for the area that does not involve $N$ and the lower bound $c_2(p)\sqrt{c}N$ we obtain the desired upper bound for $N$ in terms of $p$.

\subsection{Branch points in stationary geodesic nets on closed Riemannian surfaces: reduction to a local version}

In a nutshell, Theorem 2 reduces to a version of the problem about polygon partitions at the beginning of this section when the polygons of the partition are flat, but there is a small positive or negative curvature living at the vertices of the partition so that the total absolute curvature is very small. Then one can proceed more or less like
it was discussed in the previous subsection for flat partitions.

Here are some details: After a rescaling, we can assume that the (sectional) curvature of the surface is in the interval $[-1,1]$. Let $d$ be an upper bound for the diameter of the surface, and $v$ is a positive lower bound for its area. If the Euler characteristic of the surface is not $0$, the Gauss-Bonnet theorem implies that $v\geq 2\pi\chi(M)\geq 2\pi$, where $\chi(M)$ is the Euler characteristic of the surface, so we can simply take $v=2\pi$. A modern version of Cheeger's lower bound
for the injectivity radius of Riemannian manifolds and the inequality
relating the convexity radius with the injectivity radius imply that the convexity radius, $conv(M)$ of $M$ is not less than ${v\over 4\exp(d)}$.
For $r\leq {conv(M)\over 2}$ we also have lower bound
${8\over\pi}r^2$ for the area of metric discs of radius $r$ ([CCLW]). The Bishop comparison
theorem implies that the areas of all balls of radius $R$
in $M$ will be less than the volumes of the corresponding
balls in the hyperbolic space. In particular, these volumes will be less than $\pi\exp(d)$. Therefore,
the lower bound ${v\over 4\exp(d)}$ for $conv(M)$ is strictly less than $1$.

Given a closed geodesic net of length $l$ in $M$ consider $r={1\over 2000}\min\{1, conv(M), {1\over l}\}$. For each $a\in M$ use the coarea formula to find $r_*\in ({r\over 2}, r)$ such that the geodesic
net intersect the geodesic circle of radius $r_*$ centered at $a$ transversally and not at vertices, and the number of points 
of intersection counted with integer multiplicities equal to the weight of the corresponding edges of the geodesic net is at most $p= {2l\over r}$.
Denote the resulting geodesic net with the imbalance $\leq p$ in the ball of radius $r_*$ centered at $a$ by $T$.

Later we are going to prove Proposition 2
(which is the local version of Theorem 2) that  asserts that for $l\geq c(M)=\max\{1,{1\over conv(M)}\}$,
the number of branch points of $T$ is at most $(350l)^{(180l)^4}$. For $l\leq c(M)$ the number of the branch points is at most $(350c(M))^{(180c(M))^4}$.
(We are going to explain the proof of this key result in the next subsection.)

Once this result is established we can prove the global estimate using the following standard ideas (originally
due to Gromov) from the comparison geometry. Pack in $M$
the maximal possible number of disjoint balls of radius ${r\over 2}$. Then (1) the concentric balls of radius $r$
cover all $M$; (2) the number of these balls does not exceed the ratio of the volume of $M$ to ${8\over \pi}r^2$ yielding the upper bound ${\pi^2\over 8}{\exp(d)\over r^2}$. 

Thus, the total number of the branch points of the net does not exceed ${\pi^2\over 8}{\exp(d)\over r^2}$ times
$(350l)^{(180l)^4}$ for $l\geq c(M)$, and $(350c(M))^{(180c(M))^4}$, if $l<c(M)$. An easy
calculation implies that these products do not exceed $\exp(d)(375l)^{(180l)^4}$ and $\exp(d)(375c(M))^{(180c(M))^4}$. If $l\geq \exp(d)$, then
the first of these upper bounds can be replaced by $(400l)^{(180l)^4}$. Also, one can replace the lower bound $l\geq c(M)$ by the larger lower bound $l\geq {\exp(d)\over \min\{{v\over 4},1\}}$. Similarly, if
$l\leq \tilde c(M)={\exp(d)\over \min\{{v\over 4}, 1\}}$,
the number of branch points is less than $(400\tilde c(M))^{(180\tilde c(M))^4}$, and we obtain Theorem 2.
%


\subsection{Geodesic nets on surfaces: proof of the local version}
Let $T$ be a geodesic net on $M$ of length $\leq l$ contained in a convex
metric disk with unbalanced vertices on the boundary
of this ball and imbalance $\leq p$. For all sufficiently large $l$ $p=4000\ l^2$.
Proceeding as it will be described in section 3.3, we are going to obtain a dual subdivison of a polygon of perimeter $p$ into flat convex polygons. The number of these polygons will be equal to 
the number of the branch points of the net in the disk. However, the ambient polygon will not be flat. It will have the curvature (=the angular defects) at its vertices. The vertices will correspond to domains in the complement of the geodesic net to the metric disk. The value of the angular defect at a vertex will be equal to the total curvature in the corresponding domain. As we assumed that the absolute value of the curvature of the original
surface does not exceed $1$, the absolute values of the angular defects at the vertices will not exceed the areas of the corresponding domains in the metric disk. Their sum will not exceed the area of disk that can be majorized by $\pi\exp(r)\leq 4r^2$. (The last inequality
holds as we consider only $r\leq {1\over 2000}$.)

We will need to establish an analogue of Theorem 1 for polyhedral disks with small curvature living at vertices. This will use the same
ideas as the proof of Theorem 1 but there will be certain technical complications. First, we reduce the general case to the case of a polyhedral surface $N\subset T$, where all faces are ``thin" polygons.
Its $1$-skeleton is the union of many broken lines connecting a pair of vertices 
with the same number $p$ of edges of length $1$ with angles between adjacent vertices that are close to
$\pi$. These paths are not vertex- or even edge-disjoint,
but we are able to find a large number $n$ of vertex-disjoint paths connecting $A$ and $B$ of length $l\leq p$. 

The analogue of Lemma 6 for polyhedral surfaces
(namely, Corollary 8)
asserts that the sum of all angles at inner vertices of each of these paths is at least
$\sqrt{2c\over p}-\mu$, where
$\mu$ is the total angular defect of the polyhedral surface $N$ with ``thin" polygonal faces. The analogue 
of Lemma 7 for polyhedral surfaces (Lemma 9) implies that all these $n$ paths are $\max\{\sqrt{2pc+{p^2\mu^2\over 3}}, {p\mu\over \sqrt{2}}\}$-close to a minimal geodesic connecting $A$ and $B$. (Corollary 9 yields an upper bound for the area of neighborhoods of the minimal geodesic between $A$ and $B$.) This means that we also
need to estimate $\mu$ for the thin polyhedral surfaces.
The idea is that the thin polyhedral surfaces correspond to a part of the original geodesic net in a thin strip of the metric disk of radius $r_*$ in $M$.
This idea leads to an upper bound for $\mu$ of the form $const\ l^2\sqrt{c}$ (Lemma 10) which is much more than term $\sqrt{2c\over p}$ in Corollary 8. This makes the lower bound in Corollary 8 negative. On the first glance it might seem that this bound cannot be applied in our situation, but it can be made useful 
by partitioning the thin surface with $n$ vertex-disjoint paths into $20000\ l^2\sqrt{p}$ polyhedral surfaces with at least $\left\lfloor{n\over 20000 l^2\sqrt{p}}\right\rfloor$ paths. At least one of these surfaces has total absolute curvature $\leq\sqrt{c\over 2p}$, which is sufficient for an application of Corollary 8 that together with Lemma 9 and its Corollary 9 
yields lower bound ${n\over l^2\sqrt{p}}$
for the area of the surface with the smallest total curvature which is sufficient for our purposes.

\subsection{ Plan of the rest of the paper}
In the next section we introduce (metric) dual complexes for geodesic nets in the plane and in convex metric discs on surfaces. In section 4 we prove that the dual complex of a geodesic net with many edges contains a ``thin" subcomplex, where all edges nearly parallel.
Moreover, a pair of vertices in this subcomplex is connected by many vertex-disjoint paths of the same length $l\leq p$. In section 5.1 we prove Theorem 1 modulo lemmata 6 and 7. In section 5.2 we prove that given such a path between $A$ and $B$ one can estimate below the sum of the absolute values of the turning angles at the inner vertices in terms of $p$ and $c=l-|AB|$. First, we prove
this result in the plane case (Lemma 6, Corollary 6),
and then for polyhedral surfaces (Corollaries 7 and 8).
In section 5.3 we prove Lemma 7 that asserts that in the plane case the distance from the broken line to the straight line $(AB)$ does not exceed
$\sqrt{2cp}$. Then we generalize it to polyhedral surfaces (Lemmata 8 and 9). In Corollary 9
we prove an upper bound for the area of the part of a polyhedral surface sufficiently close to the geodesic $(AB)$, where an upper bound for the distance is provided by Lemma 9. We start section 6 from Lemma 10 that gives an upper bound for the total absolute
curvature of the ``thin" polyhedral surface dual to a geodesic net in a small convex disk in $M$ in terms of $\sqrt{c}$,
where $c$ is calculated for two ``almost straight" paths forming the boundary of the considered surface. (The proof uses a lemma about the length of orthogonal projections of broken geodesics on surfaces that is proven in section 7.) Finally, section 6 ends with a proof of Proposition 2 providing a local version of Theorem 2. (As we saw in section 2.2, Proposition 2 implies Theorem 2.)

%


\section{Dual complexes of nets}
\subsection{Extensions of geodesic nets on the plane}
Given a geodesic net $N$, take its vertex $v$. As was noticed in section 3.4 of [NP], if $v$ is unbalanced, one can make $v$ balanced by attaching to the geodesic net $2$
new rays if $i(v)\in (0,1)$, one new ray (in the direction of $I(v)$), if $i(v)=1$, and 
$\lfloor i(v)\rfloor+1$ new rays,
if $i(v)>1$.
We can perform this operation with all vertices, adding newly created vertices at the intersections of newly added rays with already existing edges and other newly added rays. It is easy to see that the total number of vertices in
the new object is bounded in terms of the imbalance of the original net and the number
of vertices in the original net. Take the intersection of the newly created geometric object with a circle $C$ of a very large radius. More precisely, we choose $C$ so that (a) the net is contained in the interior of $C$; and (b) all points of intersection of the newly added rays are also contained in the interior of $C$.
The points of intersection are precisely
the points of intersection of each newly created ray with $C$. Remove
the unbounded parts of the rays outside of the circle. As a result, we will obtain
a geodesic net. Its unbalanced points will be precisely its points on $C$.
The imbalance vector at each of these points will have the norm $1$, and the total number
of these points will be equal to the number $p$
of newly added rays. Note, that $p<I+2U\leq 2(I+U)$. (The term $2U$ appears in the case when all non-zero values of $i(v)$ are very small positive, but we will need two new rays to balance this vertex. If $i(v)\geq 1$, we need at most $i(v)+1$ new rays to balance $v$.)
All unbalanced vertices are located on a circle, and all balanced vertices are in the disc bounded by this circle. The resulting net will be called
a {\it convex extension} of $N$. When we extended the rays, we create new balanced vertices that appear as the points of intersection of the rays with the edges of the graph. The number of these points of intersection does not exceed $pE$, where $E\leq 3V-6$ denotes the number of distinct edges of the original net, and $V$
the number of its vertices. 
(The inequality $E\leq 3V-6$ holds for all planar graphs.) We also add new vertices that are the points of intersection of pairs of newly added rays. The number of such vertices does not exceed
$\frac{p^2-p}{2}$.

Each edge of this net is (a) an edge of the original net; (b) or, a segment of an edge of the original net; (c) or, a segment in one of the new rays. The edges in (b) appear when an edge of the original net is cut by one or more of the rays; the edges in (c) appear because the rays can intersect each other.
We assign weight $1$ to all edges of type (c). Each edge of types (a), (b) corresponds to an edge of the original net, and will inherit its weight.
Let us summarize the main features of this construction:
\begin{theorem}
    Given a plane geodesic net $Net$ with $U$ unbalanced vertices, $B$
    balanced vertices, imbalance $I$, define extended imbalance $p$ of $Net$  as $\Sigma_{\{v\in U| i(v)>1\}} \lfloor i(v)\rfloor+U+|\{v\in U |i(v)\in (0,1)\}|$. Then there exists a plane  geodesic net $Net'$ with $U'=p<I+2U\leq 2(I+U)$ unbalanced vertices that contains $Net$ as its subnet, such that:
    \par\noindent
    (1) The number $B'$ of balanced vertices of $Net'$ is at least $B$, $B'< B+(3(U+B)-6)(I+2U)+\frac{(I+2U)(I+2U-1)}{2}$;
    \par\noindent
    (2) For each unbalanced vertex $v$ of $Net'$ the imbalance at $v$ is equal to $1$. Therefore the imbalance of $Net'$ is equal to the number $U'=p$ of unbalanced vertices of $Net'$;
    \par\noindent
    (3) All unbalanced vertices are located on a circle in the ambient plane; all balanced vertices are located in the open disc bounded by this circle.
    \par\noindent
    (4) If the weight of an edge of $Net'$ is not equal to $1$, it is equal to the weight of some edge of $Net$.
\end{theorem}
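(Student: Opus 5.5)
The proof is the construction described just before the statement, made precise. I would proceed in three steps: balance each unbalanced vertex with rays, cut everything at a large circle, and then count vertices.

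\textbf{Step 1: balancing the unbalanced vertices.} For each $v\in U$ we choose unit vectors $u_1,\dots,u_k$ with $\sum u_j=-I(v)$. The number $k$ is chosen as follows.
\begin{itemize}
\item If $i(v)=1$, take $k=1$ and $u_1=-I(v)/i(v)$.
\item If $i(v)\in(0,1)$, take $k=2$: the two unit vectors symmetric about $-I(v)$ at half-angle $\arccos(i(v)/2)$.
\item If $i(v)>1$, take $k=\lfloor i(v)\rfloor+1$. Since $k-2<i(v)\le k$ (or $<k$), one can take $k-1$ copies of one unit vector and adjust the remaining one, or tilt all $k$ symmetrically by the angle $\arccos(i(v)/k)$.
\end{itemize}
The directions must also be generic. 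They should avoid the directions of existing edges at $v$, so that the new edges do not overlap old ones, and avoid pairwise parallelism and collinearities with other vertices. Since the constructions above come in continuous families (one can rotate pairs), a small perturbation inside each family achieves this. After attaching rays from $v$ in directions $u_j$, the vertex $v$ becomes balanced.

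The total number of rays is $p=\sum_{i(v)>1}\lfloor i(v)\rfloor+U+|\{v: i(v)\in(0,1)\}|$, matching the definition of the extended imbalance. This gives $p\le\sum_v (i(v)+1)+|\{i(v)<1\}|<I+2U$, because each term satisfies $\lfloor i\rfloor+1\le i+1$, and for $i<1$ the term $2< i+2$.

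\textbf{Step 2: cutting at a large circle and assigning weights.} We subdivide at every crossing of a ray with an existing edge and at every crossing of two rays. Genericity makes these crossings transversal and interior, and at a transversal crossing point the four unit tangents cancel in pairs, so every new vertex is balanced. There are finitely many crossings, because each pair of rays meets at most once and each ray meets each segment at most once. We choose a circle $C$ enclosing all vertices and crossings and cut the rays there. The endpoints on $C$ have degree $1$, and their imbalance is the unit tangent of a weight-$1$ edge, so it has norm $1$. This gives (2) and (3).

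Weights are assigned as in the construction: the pieces of old edges keep their old weights, and the ray pieces get weight $1$. This gives (4). The original net is a subnet because we only subdivided edges, and degree-$2$ subdivision points are harmless.

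\textbf{Step 3: counting balanced vertices.} The vertices of $Net$ remain, and all of them are now balanced, so $B'\ge B$. The new balanced vertices are of two kinds.
\begin{itemize}
\item Ray--edge crossings. These number at most $pE$, where $E\le 3(U+B)-6$ by planarity. When $U+B$ is too small for this inequality, the planarity bound should be replaced by a trivial one.
\item Ray--ray crossings. These number at most $\binom p2$.
\end{itemize}
Substituting $p<I+2U$ (so $p\le I+2U$ suffices in the first term and $p(p-1)\le (I+2U)(I+2U-1)$ in the second), the strict inequality in (1) comes from $B'\le B+pE+\binom p2$ together with $p<I+2U$, at least when $E\ge1$.

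The main obstacle is not the counting but the genericity in Step 1. One has to make sure that the ray directions can be chosen to avoid overlaps with existing edges and with other rays, and that no ray passes through an existing vertex. This holds because the admissible sets of directions are one-parameter families, or can be perturbed slightly, while the bad configurations are finitely many lines. One also has to handle the edge case $p=U'$ versus the strict inequality, which I would treat by a small case check.
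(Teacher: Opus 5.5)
Your construction is the same as the paper's: attach rays at each unbalanced vertex, subdivide at crossings, cut at a large circle, and count ray--edge and ray--ray crossings. However, two steps do not hold as you wrote them.

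\textbf{Genericity.} The number $k_v$ of rays at $v$ is fixed by the definition of $p$. For $k_v=1$ (that is, $i(v)=1$) and $k_v=2$ (that is, $i(v)\in(0,1)\cup(1,2)$) the directions are completely determined by $I(v)$: either one ray in direction $-I(v)$, or the unique pair symmetric about $-I(v)$ at half-angle $\arccos(i(v)/2)$. There is nothing to perturb, so your perturbation argument covers only $k_v\ge 3$. Even there, both of your explicit recipes produce coinciding rays, which would merge into a ray of weight $>1$; you need a perturbation inside the $(k_v-2)$-dimensional solution set.

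Degeneracies really do occur. Take $w=(-1,0)$, $v=(0,0)$, $u=(1,0)$, with $[wv]$ of weight $1$ and $[vu]$ of weight $2$. Then $i(w)=i(v)=1$ and both forced rays point in direction $(-1,0)$. The ray from $v$ runs along $[vw]$ and then along the ray from $w$. The resulting vertex on $C$ has imbalance $2$, so (2) and $U'=p$ fail for this construction.

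The paper's proof is silent on this point; it tacitly treats the new rays as transversal to the net and to each other. So you do not fall short of it here, but general position has to be stated as an assumption rather than claimed to follow from perturbation.

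\textbf{Counting.} You note that the $U$ formerly unbalanced vertices are balanced in $Net'$, but then write $B'\le B+pE+\binom p2$, which drops them. Your argument actually gives $B'\le B+U+pE+\binom p2$. The slack in $p<I+2U$ does not absorb the extra $U$ in general, since $I+2U-p\to 0$ when all $i(v)$ are small.

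The missing observation is that a ray from $v$ meets an edge incident to $v$ only at $v$. Hence ray--edge crossings number at most $p(E-1)$, and since $p\ge U$,
\[
B'\le B+U+p(E-1)+\binom p2\le B+pE+\binom p2 .
\]
Strictness then follows from $1\le p<I+2U$ and the monotonicity of $x(x-1)/2$, without needing $E\ge 1$.

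For $U+B=2$ you cannot replace $3(U+B)-6$ by another bound, because it is part of the statement. Check this case directly instead: then $E=1$, there are no ray--edge crossings, and $U+\binom p2<\binom{I+2U}{2}$. The paper's own count is equally terse about the $U$ term.

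A minor slip: in $p\le\sum_v(i(v)+1)+|\{i(v)<1\}|<I+2U$, the second inequality is not strict when every $i(v)<1$. In that case strictness comes from the first inequality instead.
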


\subsection{Dual complexes for geodesic nets in the plane.}
Consider a geodesic net $N$ in $\R^2$.
Then, for any balanced vertex $v$ the condition for $N$ to be a geodesic net is: 
$\sum_{e\ni v} w(e)\frac{e}{\|e\|} = 0$. (Here $w(e)$ is a positive integer weight of $e$, and each edge $e$ is directed from $v$.)
For each edge $e$ 
consider the {\it unit}  vector $e^*$ that is orthogonal to $e$ such $e\wedge e^*>0$.
Mark the initial  end of $e^*$ with the name of the face of $N$ adjacent to $e$ and located clockwise of $e$, and the terminal end with the name of the face located counterclockwise of $e$. Note that $e^*$ can be obtained as the result of rotating $\frac{e}{\Vert e\Vert}$ counterclockwise by $\frac{\pi}{2}$. Therefore, the sum of $w(e)e^*$ over all edges $e$ incident to a vertex of $N$ is also equal to $0$.

Now for each balanced vertex $v$ order all incident edges
in a counterclockwise order $e_i$, $i=1,\ldots , deg\ v$, and connect the correspondent vectors $w(e_i)e^*_i$ by gluing the terminal vertex of $e^*_i$ to the initial vertex of $e^*_{i+1}$.
As $\Sigma_{i=1}^{deg\ v}w(e_i)e^*_i=0$, the terminal vertex of $e^*_{deg\ v}$ will coincide with
the initial vertex of $e^*_1$, and we will obtain a convex polygon where each side has integer length. 

We are going to have one polygon for each vertex $v$.
Note that when a vertex of polygon is regarded as the terminal vertex of one of the two
incident edges of the polygon or initial vertex of another incident edge, this vertex is marked by the name
of the same face of $N$ located between the edges of $N$ corresponding to the two considered edges of the polygon.

Now we will take the disjoint union of polygons corresponding to all balanced vertices of $N$, and take a quotient map that glues them along edges as follows:
Note that each edge $e$ is incident to two vertices. If both these vertices are balanced, the corresponding edge $e^*$ will appear in two polygons that correspond to the vertices of $e$ and will yield sides of the same length in both these polygons. For each such edge $e$ of $N$ we will identify two its copies in different polygons matching vertices
marked by the name of the same face. If only one end vertex of $e$ is balanced, then
the corresponding edge will not be identified with anything.

As the result we will obtain a metric $2$-dimensional complex $K$. Its faces are convex polygons with edges of integer length and correspond to vertices of $N$. Its edges correspond to edges of $N$ as explained above. Each vertex $w$ of $K$ corresponds to a face of $N$ in the following sense: all edges of $K$ incident to $w$ are marked by
the name of the same face $F$ at their vertices corresponding to $w$. These edges were
obtained from edges of $N$ at the boundary of $F$, and appear in the same cyclic order
as the corresponding edges $e_1,\dots , e_k$ of $N$ in $\partial F$. If all vertices of
$F$ are balanced, then we will see one edge of $K$ incident to $w=w_F$ for each edge of $N$
in the boundary of $F$. Assume that all vertices of $F$ are balanced.
The angles at $w_F$ between consecutive edges of $K$ incident to $w$ are equal to $\pi$ minus the angles between the corresponding edges $e_i, e_{i+1}$ of $F$. As the result the sum of all angles of $K$ at $w$ is equal to $k\pi-(k-2)\pi=2\pi$, and the complex $K$ is flat at $w$ (see Figure~\ref{fig:dual-flat}).

\begin{figure}[ht]
\centering
\resizebox{\ifdim\width>\textwidth \textwidth\else\width\fi}{!}{%
\begin{tikzpicture}[
   >={Stealth[length=2.2mm]},
   edge/.style={line width=0.9pt, black},
   stub/.style={line width=0.9pt, black!55},
   cell/.style={draw=blue!60!black, line width=0.8pt},
   ang/.style={line width=0.6pt, red!70!black},
   dot/.style={circle, fill=black, inner sep=1.5pt},
   scale=1.0]

\begin{scope}
  \coordinate (V1) at (0.000,0.000);
  \coordinate (V2) at (3.000,0.000);
  \coordinate (V3) at (1.777,2.117);
  \fill[black!5] (V1)--(V2)--(V3)--cycle;
  \draw[edge] (V1)--(V2)--(V3)--cycle;
  \draw[stub] (V1) -- (-0.514,-0.613);
  \draw[stub] (V1) -- (-0.800,0.000);
  \draw[stub] (V2) -- (3.800,0.000);
  \draw[stub] (V2) -- (3.400,-0.693);
  \draw[stub] (V3) -- (1.377,2.810);
  \draw[stub] (V3) -- (2.291,2.730);
  \draw[ang] (V1) ++(0.0:0.62) arc (0.0:50.0:0.62);
  \node[red!70!black,font=\footnotesize] at (0.861,0.401) {$\alpha_{1}$};
  \draw[ang] (V2) ++(120.0:0.62) arc (120.0:180.0:0.62);
  \node[red!70!black,font=\footnotesize] at (2.177,0.475) {$\alpha_{2}$};
  \draw[ang] (V3) ++(230.0:0.62) arc (230.0:300.0:0.62);
  \node[red!70!black,font=\footnotesize] at (1.694,1.171) {$\alpha_{3}$};
  \foreach \p in {(V1),(V2),(V3)} \node[dot] at \p {};
  \node at (1.52,0.78) {$F$};
  \node[font=\footnotesize] at (1.5,-1.55) {(a)};
\end{scope}

\draw[->, line width=1pt, gray] (4.9,1.0) -- (6.1,1.0) node[midway,above,font=\footnotesize,black] {dual};

\begin{scope}[shift={(8.900,1.050)}]
  \coordinate (w) at (0,0);
  \filldraw[cell,fill=blue!12] (0.000,0.000) --   (0.000,-2.050) --   (1.775,-1.025) --   (1.775,1.025) -- cycle;
  \node[font=\footnotesize] at (330.0:1.025) {$\pi-\alpha_{2}$};
  \filldraw[cell,fill=blue!20] (0.000,0.000) --   (1.775,1.025) --   (0.205,2.343) --   (-1.570,1.318) -- cycle;
  \node[font=\footnotesize] at (85.0:1.176) {$\pi-\alpha_{3}$};
  \filldraw[cell,fill=blue!8] (0.000,0.000) --   (-1.570,1.318) --   (-1.570,-0.732) --   (0.000,-2.050) -- cycle;
  \node[font=\footnotesize] at (205.0:0.866) {$\pi-\alpha_{1}$};
  \node[dot] at (w) {};
  \node[font=\footnotesize] at (85.0:0.48) {$w_F$};
\end{scope}
\node[font=\footnotesize] at (8.90,-1.55) {(b)};
\end{tikzpicture}
}
\caption{A face $F$ of the net and the flat vertex $w_F$ of the dual complex it gives rise to; drawn for a triangular face, whose vertices are then $4$-valent, so each of the three cells at $w_F$ is a unit rhombus.}
\label{fig:dual-flat}
\end{figure}

The boundary of $K$ will consist of edges that were not identified with anything. We will call $K$ the {\it dual complex} of $N$.

\begin{remark}[An example: Parsch's net]
F.~Parsch ([P], [NP, \S 3.4]) constructed an explicit planar geodesic net with $U=4$ unbalanced vertices at the corners of a square and $B=16$ balanced vertices, all edges of weight $1$. The net and its dual are shown in Figure~\ref{fig:parsch}. Each balanced vertex of the net on the left corresponds to a single convex cell of the tiling on the right; each face of the net corresponds to a vertex of the dual; each edge of the net corresponds to an edge of the dual of the same integer length (here, length~$1$). The dual net has been rotated by $90$ degrees.
\end{remark}

\begin{figure}[ht]
\centering
\definecolor{xdxdff}{rgb}{0.49,0.49,1}
\definecolor{uuuuuu}{rgb}{0.267,0.267,0.267}
\definecolor{ududff}{rgb}{0.302,0.302,1}
\definecolor{qqqqff}{rgb}{0,0,1}
\begin{tikzpicture}[scale=1.1,line cap=round,line join=round,line width=1pt]
\draw (-5.5,8)-- (-6.1340,5.6340);
\draw (-8.5,5)-- (-6.1340,5.6340);
\draw (-6.1340,5.6340)-- (-6.0876,5.5876);
\draw (-5.5,8)-- (-4.8660,5.6340);
\draw (-4.8660,5.6340)-- (-4.9124,5.5876);
\draw (-4.9124,5.5876)-- (-2.5,5);
\draw (-4.8660,5.6340)-- (-2.5,5);
\draw (-5.5,8)-- (-4.9124,5.5876);
\draw (-5.5,5.7307)-- (-4.9124,5.5876);
\draw (-5.5,5.7307)-- (-4.9780,5.5220);
\draw (-5.5,5.7307)-- (-6.0220,5.5220);
\draw (-6.0876,5.5876)-- (-5.5,5.7307);
\draw (-4.9124,5.5876)-- (-4.9780,5.5220);
\draw (-6.0876,5.5876)-- (-6.0220,5.5220);
\draw (-6.0220,5.5220)-- (-6.2307,5);
\draw (-6.0876,5.5876)-- (-6.2307,5);
\draw (-4.9124,5.5876)-- (-4.7693,5);
\draw (-4.9780,5.5220)-- (-4.7693,5);
\draw (-4.7693,5)-- (-4.9780,4.4780);
\draw (-4.9780,4.4780)-- (-5.5,4.2693);
\draw (-5.5,4.2693)-- (-4.9124,4.4124);
\draw (-4.9124,4.4124)-- (-4.7693,5);
\draw (-4.9780,4.4780)-- (-4.9124,4.4124);
\draw (-4.9124,4.4124)-- (-4.8660,4.3660);
\draw (-6.2307,5)-- (-6.0220,4.4780);
\draw (-6.0220,4.4780)-- (-5.5,4.2693);
\draw (-5.5,4.2693)-- (-6.0876,4.4124);
\draw (-6.0876,4.4124)-- (-6.2307,5);
\draw (-6.0220,4.4780)-- (-6.0876,4.4124);
\draw (-6.0876,4.4124)-- (-6.1340,4.3660);
\draw (-6.2307,5)-- (-8.5,5);
\draw (-5.5,4.2693)-- (-5.5,2);
\draw (-5.5,2)-- (-4.8660,4.3660);
\draw (-4.8660,4.3660)-- (-2.5,5);
\draw (-8.5,5)-- (-6.1340,4.3660);
\draw (-6.1340,4.3660)-- (-5.5,2);
\draw (-5.5,8)-- (-5.5,5.7307);
\draw (-6.0876,5.5876)-- (-5.5,8);
\draw (-6.0876,5.5876)-- (-8.5,5);
\draw (-4.7693,5)-- (-2.5,5);
\draw (-4.9124,4.4124)-- (-5.5,2);
\draw (-4.9124,4.4124)-- (-2.5,5);
\draw (-6.0876,4.4124)-- (-8.5,5);
\draw (-6.0876,4.4124)-- (-5.5,2);
\begin{scope}[shift={(-2,0)}]
    \draw (3,3)-- (4,3);
    \draw (3,3)-- (3.2412,3.9705);
    \draw (4,3)-- (3.7588,3.9705);
    \draw (3.2412,3.9705)-- (3.5,4.9364);
    \draw (3.5,4.9364)-- (3.7588,3.9705);
    \draw (2.5341,4.6776)-- (3.2412,3.9705);
    \draw (2.5341,4.6776)-- (3.5,4.9364);
    \draw (3.5,4.9364)-- (4.4659,4.6776);
    \draw (4.4659,4.6776)-- (3.7588,3.9705);
    \draw (5.4364,4.4364)-- (5.4364,5.4364);
    \draw (1.5636,5.4364)-- (1.5636,4.4364);
    \draw (2.5341,4.6776)-- (1.5636,4.4364);
    \draw (4.4659,4.6776)-- (5.4364,4.4364);
    \draw (2.5341,5.1952)-- (3.5,4.9364);
    \draw (3.5,4.9364)-- (4.4659,5.1952);
    \draw (2.5341,5.1952)-- (1.5636,5.4364);
    \draw (4.4659,5.1952)-- (5.4364,5.4364);
    \draw (3.2412,5.9023)-- (2.5341,5.1952);
    \draw (3.7588,5.9023)-- (4.4659,5.1952);
    \draw (3.2412,5.9023)-- (3.5,4.9364);
    \draw (3.5,4.9364)-- (3.7588,5.9023);
    \draw (3.2412,5.9023)-- (3,6.8728);
    \draw (3.7588,5.9023)-- (4,6.8728);
    \draw (3,6.8728)-- (4,6.8728);
    \draw (4,6.8728)-- (4.9705,7.1140);
    \draw (4.9705,7.1140)-- (5.6776,6.4069);
    \draw (5.6776,6.4069)-- (5.4364,5.4364);
    \draw (1.3224,6.4069)-- (2.0295,7.1140);
    \draw (2.0295,2.7588)-- (1.3224,3.4659);
    \draw (5.6776,3.4659)-- (4.9705,2.7588);
    \draw (2.0295,7.1140)-- (3,6.8728);
    \draw (1.3224,6.4069)-- (1.5636,5.4364);
    \draw (1.5636,4.4364)-- (1.3224,3.4659);
    \draw (2.0295,2.7588)-- (3,3);
    \draw (4,3)-- (4.9705,2.7588);
    \draw (5.6776,3.4659)-- (5.4364,4.4364);
    \draw (2.0295,7.1140)-- (1.0636,7.3728);
    \draw (1.0636,7.3728)-- (1.3224,6.4069);
    \draw (1.3224,3.4659)-- (1.0636,2.5);
    \draw (1.0636,2.5)-- (2.0295,2.7588);
    \draw (4.9705,2.7588)-- (5.9364,2.5);
    \draw (5.9364,2.5)-- (5.6776,3.4659);
    \draw (5.6776,6.4069)-- (5.9364,7.3728);
    \draw (5.9364,7.3728)-- (4.9705,7.1140);
\end{scope}
\begin{scriptsize}
\foreach \p in {(-5.5,8),(-2.5,5),(-5.5,2),(-8.5,5)}
  \fill[qqqqff] \p circle (1.5pt);
\foreach \p in {(-6.0876,5.5876),(-4.9124,5.5876),(-6.0220,5.5220),(-4.9780,5.5220),
  (-6.0876,4.4124),(-6.0220,4.4780),(-4.9780,4.4780),(-4.9124,4.4124)}
  \fill[qqqqff] \p circle (0.8pt);
\foreach \p in {(-6.1340,5.6340),(-4.8660,5.6340),(-6.1340,4.3660),(-4.8660,4.3660),
  (-5.5,5.7307),(-6.2307,5),(-4.7693,5),(-5.5,4.2693)}
  \fill[qqqqff] \p circle (0.8pt);
\begin{scope}[shift={(-2,0)}]
    \foreach \p in {(3,3),(4,3),(3.5,4.9364),(3.2412,3.9705),(3.7588,3.9705),
    (4.4659,4.6776),(2.5341,4.6776),(5.4364,4.4364),(5.4364,5.4364),
    (1.5636,5.4364),(1.5636,4.4364),(2.5341,5.1952),(4.4659,5.1952),
    (3.2412,5.9023),(3.7588,5.9023),(3,6.8728),(4,6.8728),
    (4.9705,7.1140),(5.6776,6.4069),(1.3224,6.4069),(2.0295,7.1140),
    (2.0295,2.7588),(1.3224,3.4659),(5.6776,3.4659),(4.9705,2.7588),
    (1.0636,7.3728),(1.0636,2.5),(5.9364,2.5),(5.9364,7.3728)}
    \fill[qqqqff] \p circle (1.6pt);
\end{scope}
\end{scriptsize}
\end{tikzpicture}
\caption{Parsch's example: a planar geodesic net with four unbalanced and sixteen balanced vertices (left) and its dual complex (right); vertex positions are exaggerated slightly for clarity.}
\label{fig:parsch}
\end{figure}

Now apply this construction to a convex extension a geodesic net. All the vertices with the
exception of vertices of degree $1$ at the end of the added edges (on the added rays) are balanced.
All interior vertices of the resulting complex $K$ will correspond to bounded faces, and $K$ will
be flat at these vertices. The number of edges in the boundary of $K$ is equal
to the number of vertices of degree $1$.  
Only edges dual to edges on rays leading to the vertices of degree $1$ will belong to the boundary of $K$. The cyclic order will be the same
as the order of the corresponding vertices on $C$, and the angles of the polygon (or polygons) at the boundary will be positive and less than $\pi$. Indeed, each angle is equal $\pi$ minus
the angle between two consecutive rays, and these angles will be all positive as the rays do not intersect beyond the circle $C$. 

As the sum of the imbalance vectors at all vertices
of each geodesic net is zero, the sum of these vectors is equal to $0$. The same will be true for the perpendicular vectors that form the boundary of $K$.
So, the boundary of $K$ will be a closed
broken line.

The unit tangent vectors to the rays at the points of intersection with $C$ can be extended to an injective unit vector field $f$ on $C$, that is, a unit vector field with monotonously changing angle on each arc of $C$ 
between consecutive vertices. This vector field is isotopic to the normal vector field to $C$, and, therefore, has the same rotation angle $2\pi$. 
Moreover, since the angles between consecutive imbalance vectors are positive, the sum of any proper subset of the set of these angles will be less that $2\pi$. 

This means that the boundary of $K$ will
be a simple convex closed curve. Now the combinatorial Gauss-Bonnet formula implies that the flat PL-manifold with boundary $K$ has Euler characteristic $1$, and, therefore, is homeomorphic to the $2$-disc. Now it is easy to see that the flat simplicial complex $K$ can be embedded  to the Euclidean plane.


Figure~\ref{fig:parsch-ext} shows this for Parsch's example: each of the four concavities along the sides of the inner dual (the same dual as in Figure~\ref{fig:parsch}, right) is filled by a chain of new unit-sided cells, producing a $20$-gon boundary.

\begin{figure}[ht]
\centering
\definecolor{qqqqff}{rgb}{0,0,1}
\begin{tikzpicture}[scale=1.1,line cap=round,line join=round,line width=1pt]
\draw (3,3)-- (4,3);
\draw (3,3)-- (3.2412,3.9705);
\draw (4,3)-- (3.7588,3.9705);
\draw (3.2412,3.9705)-- (3.5,4.9364);
\draw (3.5,4.9364)-- (3.7588,3.9705);
\draw (2.5341,4.6776)-- (3.2412,3.9705);
\draw (2.5341,4.6776)-- (3.5,4.9364);
\draw (3.5,4.9364)-- (4.4659,4.6776);
\draw (4.4659,4.6776)-- (3.7588,3.9705);
\draw (5.4364,4.4364)-- (5.4364,5.4364);
\draw (1.5636,5.4364)-- (1.5636,4.4364);
\draw (2.5341,4.6776)-- (1.5636,4.4364);
\draw (4.4659,4.6776)-- (5.4364,4.4364);
\draw (2.5341,5.1952)-- (3.5,4.9364);
\draw (3.5,4.9364)-- (4.4659,5.1952);
\draw (2.5341,5.1952)-- (1.5636,5.4364);
\draw (4.4659,5.1952)-- (5.4364,5.4364);
\draw (3.2412,5.9023)-- (2.5341,5.1952);
\draw (3.7588,5.9023)-- (4.4659,5.1952);
\draw (3.2412,5.9023)-- (3.5,4.9364);
\draw (3.5,4.9364)-- (3.7588,5.9023);
\draw (3.2412,5.9023)-- (3,6.8728);
\draw (3.7588,5.9023)-- (4,6.8728);
\draw (3,6.8728)-- (4,6.8728);
\draw (4,6.8728)-- (4.9705,7.1140);
\draw (4.9705,7.1140)-- (5.6776,6.4069);
\draw (5.6776,6.4069)-- (5.4364,5.4364);
\draw (1.3224,6.4069)-- (2.0295,7.1140);
\draw (2.0295,2.7588)-- (1.3224,3.4659);
\draw (5.6776,3.4659)-- (4.9705,2.7588);
\draw (2.0295,7.1140)-- (3,6.8728);
\draw (1.3224,6.4069)-- (1.5636,5.4364);
\draw (1.5636,4.4364)-- (1.3224,3.4659);
\draw (2.0295,2.7588)-- (3,3);
\draw (4,3)-- (4.9705,2.7588);
\draw (5.6776,3.4659)-- (5.4364,4.4364);
\draw (2.0295,7.1140)-- (1.0636,7.3728);
\draw (1.0636,7.3728)-- (1.3224,6.4069);
\draw (1.3224,3.4659)-- (1.0636,2.5);
\draw (1.0636,2.5)-- (2.0295,2.7588);
\draw (4.9705,2.7588)-- (5.9364,2.5);
\draw (5.9364,2.5)-- (5.6776,3.4659);
\draw (5.6776,6.4069)-- (5.9364,7.3728);
\draw (5.9364,7.3728)-- (4.9705,7.1140);
\draw (1.0636,7.3728)-- (0.8048,6.4069);
\draw (0.8048,6.4069)-- (0.5636,5.4364);
\draw (0.5636,5.4364)-- (0.5636,4.4364);
\draw (0.5636,4.4364)-- (0.8048,3.4659);
\draw (0.8048,3.4659)-- (1.0636,2.5);
\draw (1.0636,7.3728)-- (2.0295,7.6316);
\draw (2.0295,7.6316)-- (3,7.8728);
\draw (3,7.8728)-- (4,7.8728);
\draw (4,7.8728)-- (4.9705,7.6316);
\draw (4.9705,7.6316)-- (5.9364,7.3728);
\draw (5.9364,7.3728)-- (6.1952,6.4069);
\draw (6.1952,6.4069)-- (6.4364,5.4364);
\draw (6.4364,5.4364)-- (6.4364,4.4364);
\draw (6.4364,4.4364)-- (6.1952,3.4659);
\draw (6.1952,3.4659)-- (5.9364,2.5);
\draw (5.9364,2.5)-- (4.9705,2.2412);
\draw (4.9705,2.2412)-- (4,2);
\draw (4,2)-- (3,2);
\draw (3,2)-- (2.0295,2.2412);
\draw (2.0295,2.2412)-- (1.0636,2.5);
\begin{scriptsize}
\foreach \p in {(3,3),(4,3),(3.5,4.9364),(3.2412,3.9705),(3.7588,3.9705),
  (4.4659,4.6776),(2.5341,4.6776),(5.4364,4.4364),(5.4364,5.4364),
  (1.5636,5.4364),(1.5636,4.4364),(2.5341,5.1952),(4.4659,5.1952),
  (3.2412,5.9023),(3.7588,5.9023),(3,6.8728),(4,6.8728),
  (4.9705,7.1140),(5.6776,6.4069),(1.3224,6.4069),(2.0295,7.1140),
  (2.0295,2.7588),(1.3224,3.4659),(5.6776,3.4659),(4.9705,2.7588),
  (1.0636,7.3728),(1.0636,2.5),(5.9364,2.5),(5.9364,7.3728)}
  \fill[qqqqff] \p circle (1.6pt);
\foreach \p in {(0.8048,6.4069),(0.5636,5.4364),(0.5636,4.4364),(0.8048,3.4659),
  (2.0295,7.6316),(3,7.8728),(4,7.8728),(4.9705,7.6316),
  (6.1952,6.4069),(6.4364,5.4364),(6.4364,4.4364),(6.1952,3.4659),
  (2.0295,2.2412),(3,2),(4,2),(4.9705,2.2412)}
  \fill[qqqqff] \p circle (1.6pt);
\end{scriptsize}
\end{tikzpicture}
\caption{The dual of a convex extension of Parsch's net: the sixteen inner cells of Figure~\ref{fig:parsch}, with the four concavities filled so that the boundary becomes convex.}
\label{fig:parsch-ext}
\end{figure}


%

Thus, we obtain:
\begin{theorem}
    A dual complex of a convex extension $N'$ of a geodesic net $N$ is a flat metric disc
    bounded by a convex polygon that can be embedded in $\R^2$. The length of each edge of this polygon is equal to $1$, and the number
    of edges is equal to the extended imbalance $p$ of the net (defined in the text of Theorem 4).

    This polygon is tiled by smaller convex polygons, where all sides of all polygons have integer lengths.
    Each of these polygons  corresponds to a balanced vertex of $N'$, each vertex to a face of $N'$, each edge of these polygons corresponds to an edge of $N'$, and its length is equal to the weight of this edge in $N'$.
\end{theorem}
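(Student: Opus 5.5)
The plan is to assemble the construction of the preceding subsections into a proof, checking each assertion of the statement in turn. Start from the convex extension $N'$ of $N$ given by Theorem 4. Build the dual complex $K$ of $N'$ as in Section 3.2. For each balanced vertex $v$ of $N'$, take the convex polygon with sides $w(e_i)e_i^*$, listed in the counterclockwise order of the edges at $v$. Then glue these polygons along the sides dual to edges both of whose endpoints are balanced, matching corners labelled by the same face of $N'$.

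The combinatorial correspondences are immediate from the construction. Cells of $K$ correspond to balanced vertices, edges of $K$ to edges of $N'$ with length equal to the weight, and vertices of $K$ to faces of $N'$. Each cell is convex with integer side lengths because each $e_i^*$ is a unit vector and the weights are positive integers.

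Next I will verify flatness at interior vertices. An interior vertex $w_F$ of $K$ comes from a bounded face $F$ of $N'$, and all vertices of such a face are balanced. Indeed, the only unbalanced vertices are the degree-$1$ points on $C$, and these lie only on the boundaries of the unbounded regions cut off by $C$. The angles of $K$ at $w_F$ are $\pi-\alpha_i$, where the $\alpha_i$ are the interior angles of the convex face $F$. Their sum is $k\pi-(k-2)\pi=2\pi$, so $K$ is a flat surface near each interior vertex. Near interior edges the two cells meet along a common side, so $K$ is a topological surface with boundary, carrying a flat metric.

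For the boundary, the unmatched edges are exactly the duals of the terminal segments of the added rays. By Theorem 4(2) each such segment has weight $1$, so there are $p$ boundary edges, each of length $1$. The turning angle at each boundary vertex is the angle between consecutive rays at $C$, which is positive and less than $\pi$ because the rays are disjoint outside $C$. The total rotation equals the rotation of the extended vector field along $C$, namely $2\pi$. Since every partial sum of the turning angles is below $2\pi$, the developed boundary is a simple closed convex polygon. It closes up because the imbalance vectors sum to zero.

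Gauss--Bonnet for the flat PL surface $K$ then gives $2\pi\chi(K)$ equal to the total boundary turning, which is $2\pi$. So $\chi(K)=1$. Assuming $K$ is connected (take $N$ connected, as the paper does), $K$ is a disc.

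Finally, embedding: a flat simply connected surface has a developing map into $\R^2$, which is a local isometry. Its restriction to $\partial K$ is the simple convex polygon $Q$ found above. The developing map is a local homeomorphism that is injective on the boundary, so by a degree argument (every point inside $Q$ has exactly one preimage) it is a homeomorphism onto the region bounded by $Q$.

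The main obstacle is making the boundary part rigorous: identifying the turning angles with the angles between rays, justifying the total rotation $2\pi$ via the isotopy to the normal field on $C$, and running the degree argument cleanly. Everything else is bookkeeping from the construction.
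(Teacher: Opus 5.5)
Your proposal is correct and follows essentially the same route as the paper: construct $K$, get flatness at interior vertices from the exterior-angle sum $k\pi-(k-2)\pi=2\pi$ of bounded faces, identify the $p$ unit boundary edges with the duals of the terminal ray segments, use the $2\pi$ rotation of the ray directions along $C$ to obtain a convex boundary, and conclude $\chi(K)=1$ by Gauss--Bonnet. The only addition is your developing-map and degree argument for the embedding into $\R^2$, a step the paper simply asserts is easy to see.
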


Recall, that $p<I+2U\leq 2(I+U)$, where $I$ is the imbalance of the net, and $U$ denotes here the number of unbalanced vertices of the net.

Before moving further, note that this construction is very similar to a construction known since XIX century called the Maxwell-Cremona correspondence.
The Maxwell-Cremona correspondence can be applied to a planar drawing of a graph and arbitrary
real weights $w_j$ assigned to all edges $E_j$ of the graph that satisfy the following stationary condition: For each vertex $v_i$, the sum of $w_jE_j$ is equal to $0$, where $E_j$ runs over the set of all edges incident to $v_i$ and regarded as vectors directed from $v_i$.
Here $E_j=v_k-v_i$ for an adjacent vertex $v_k$, and one does {\it not} divide by the length of $E_j$. Then one draws the dual graph where each edge is replaced by the perpendicular vector of length equal to the length of  $w_jE_j$, and obtained from ${\rm sign}(w_j)\ E_j$ by the counterclockwise rotation by the angle $90^o$. It is easy to see that the stationarity in our sense corresponds
to the choice of weights $w_j=\frac{1}{\rm length \ E_j}=\frac{1}{\Vert v_i-v_k\Vert}.$  The fact that this setting is non-linear, i.e.
the weights depend on the positions of the vertices makes the most of the Maxwell-Cremona theory unusable in our situation.
In particular, we do not know if given a convex polygon with sides of length $1$
and its partition into convex polygons  with integer sides we can 
always find a corresponding plane net.
\subsection{Dual complexes for nets on surfaces.}

In section 2.2 we explained how one can start from a closed geodesic net on a surface $M$ and cut out a geodesic net $N$ contained in a convex metric disk where all unbalanced vertices are on the boundary and have just one incident edge
with total imbalance $p$. Here we are going to
describe the construction of the dual complex for this case.

As before we consider all balanced vertices $v$. We consider all incident edges $e$, and look
at their tangent vectors $\tilde e$ of length $w(e)$ in the tangent space. These vectors sum to the zero vector. The same will be true for vectors $e^*$ obtained by rotating $\tilde e$
counterclockwise in the tangent plane at $v$. (We
assume that all the tangent planes are oriented using the same orientation of the ambient convex disk.) Now we can build a convex polygon
using the vectors $e^*$. We do this for all
balanced vertices and glue these polygons along all pairs of identical edges. As before, each edge appears twice in the polygons that correspond to both its endpoints, and we similarly orient
the edges so that we glue vertices corresponding to the same face in $M$ incident
to $e$. Therefore, the vertices will correspond
to faces of the original net in $M$.
However, now the sums of the angles of faces of the dual complex meeting at a vertex of the dual complex corresponding to a face $F$
of the original net are, in general, not equal to $2\pi$. Instead, this sum of angles is equal to the sum of the outer angles of $F$.
Therefore, the Gauss-Bonnet theorem implies that the angular defect of $v$ is equal to the total curvature of $F\subset M$ (see Figure~\ref{fig:surface-dual}). Another difference
is that now the (integer) lengths of boundary edges that correspond to the edges leading to unbalanced vertices are not necessarily equal to $1$. (These lengths are equal to the integer weights of the corresponding edges in the original net, and in the case of surfaces we did not make them to be equal to one.)

\begin{figure}[ht]
\centering
\resizebox{\ifdim\width>\textwidth \textwidth\else\width\fi}{!}{%
\begin{tikzpicture}[
   >={Stealth[length=2.2mm]},
   edge/.style={line width=0.9pt, black},
   stub/.style={line width=0.9pt, black!55},
   cell/.style={draw=blue!60!black, line width=0.8pt},
   ang/.style={line width=0.6pt, red!70!black},
   dot/.style={circle, fill=black, inner sep=1.5pt},
   scale=1.0]

\begin{scope}
  \coordinate (V1) at (0.000,0.000);
  \coordinate (V2) at (3.000,0.000);
  \coordinate (V3) at (1.650,2.150);
  \fill[orange!12] (0.000,0.000) .. controls (1.245,-0.192) and (1.747,-0.133) .. (3.000,0.000) .. controls (2.532,0.958) and (2.368,1.361) .. (1.650,2.150) .. controls (0.808,1.384) and (0.548,0.998) .. (0.000,0.000) -- cycle;
  \draw[edge] (0.000,0.000) .. controls (1.245,-0.192) and (1.747,-0.133) .. (3.000,0.000) .. controls (2.532,0.958) and (2.368,1.361) .. (1.650,2.150) .. controls (0.808,1.384) and (0.548,0.998) .. (0.000,0.000);
  \draw[stub] (V1) -- ++(171.25:0.80);
  \draw[stub] (V1) -- ++(241.25:0.80);
  \draw[stub] (V2) -- ++(296.06:0.80);
  \draw[stub] (V2) -- ++(366.06:0.80);
  \draw[stub] (V3) -- ++(402.31:0.80);
  \draw[stub] (V3) -- ++(492.31:0.80);
  \draw[ang] (V1) ++(-8.75:0.62) arc (-8.75:61.25:0.62);
  \node[red!70!black,font=\footnotesize] at (0.852,0.420) {$\alpha_{1}$};
  \draw[ang] (V2) ++(116.06:0.62) arc (116.06:186.06:0.62);
  \node[red!70!black,font=\footnotesize] at (2.169,0.460) {$\alpha_{2}$};
  \draw[ang] (V3) ++(222.31:0.62) arc (222.31:312.31:0.62);
  \node[red!70!black,font=\footnotesize] at (1.605,1.201) {$\alpha_{3}$};
  \foreach \p in {(V1),(V2),(V3)} \node[dot] at \p {};
  \node at (1.55,0.80) {$F$};
  \node[font=\footnotesize] at (1.5,-1.55) {(a)};
\end{scope}

\draw[->, line width=1pt, gray] (5.0,1.0) -- (6.2,1.0) node[midway,above,font=\footnotesize,black] {dual};

\begin{scope}[shift={(9.100,1.050)}]
  \coordinate (w) at (0,0);
  \filldraw[cell,fill=blue!12] (0.000,0.000) --   (2.050,0.000) --   (1.349,1.926) --   (-0.701,1.926) -- cycle;
  \node[font=\footnotesize] at (55.0:1.176) {$\pi-\alpha_{2}$};
  \filldraw[cell,fill=blue!20] (0.000,0.000) --   (-0.701,1.926) --   (-2.628,1.225) --   (-1.926,-0.701) -- cycle;
  \node[font=\footnotesize] at (155.0:1.450) {$\pi-\alpha_{3}$};
  \filldraw[cell,fill=blue!8] (0.000,0.000) --   (-1.926,-0.701) --   (-0.609,-2.272) --   (1.318,-1.570) -- cycle;
  \node[font=\footnotesize] at (255.0:1.176) {$\pi-\alpha_{1}$};
  \draw[ang] (w) ++(310.0:1.48) arc (310.0:360.0:1.48);
  \node[red!70!black,font=\footnotesize] at (335.0:2.09) {defect};
  \node[dot] at (w) {};
  \node[font=\footnotesize] at (335.0:0.52) {$w_F$};
\end{scope}
\node[font=\footnotesize] at (9.10,-1.55) {(b)};
\end{tikzpicture}
}
\caption{A geodesic face $F$ on a curved surface and the corresponding dual vertex $w_F$, whose angular defect equals the total curvature of $F$.}
\label{fig:surface-dual}
\end{figure}

\begin{proposition}
   The length of the boundary of the dual complex of $N$ is greater than or equal to the perimeter of each of its convex cells.
\end{proposition}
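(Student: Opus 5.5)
Fix a balanced vertex $v$ of $N$ and its cell $P_v$. The plan is to read the perimeter of $P_v$ on the net side and then control it by a cut argument. By construction, the perimeter of $P_v$ equals $\sum_{e\ni v} w(e)$, the weighted degree of $v$. The boundary length of the dual complex $K$ equals $\sum w(e)$ over the edges $e$ of $N$ that end at unbalanced vertices on the boundary circle, i.e. the total weight of the net crossing the boundary of the disk. So it suffices to show that the weighted degree of any balanced vertex is at most the total weight of the edges reaching the boundary.

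The key step is a flow (cut) argument. Choose a direction, and use it to split the edges at $v$ into those going ``forward'' and ``backward''. Follow edges forward: at each balanced vertex $u$, the balancing condition $\sum_{e\ni u} w(e)\, t_{eu}=0$ projected onto a suitable direction shows that incoming weight can be pushed onward. This suggests the standard argument that, in a stationary net, every edge lies on a ``geodesic path'' that can be continued until it reaches the boundary. More concretely, I would use the first variation formula. Let $X$ be a vector field on the convex disk, e.g.\ $X=\nabla \tfrac12 \rho^2$ with $\rho$ the distance from $v$, or, in the plane, radial fields centered at $v$. Stationarity gives that $\sum_{e}\int_e w(e)\operatorname{div}_e X$ equals the boundary term $\sum_{u\in U} \langle I(u), X(u)\rangle$, plus the contribution at $v$ if $v$ is excluded.

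In the plane the bound should come out as follows. Take $X(x)=(x-v)/|x-v|$, i.e.\ the gradient of $|x-v|$. Along a segment, $\operatorname{div}_e X\ge 0$, and the singular point $v$ contributes exactly $\sum_{e\ni v} w(e)$, because each edge at $v$ has $\langle t_{ev},X\rangle=1$ there. Stationarity then gives
\[
\sum_{e\ni v} w(e)\le \sum_{e\ni v} w(e)+\sum_e w(e)\int_e \operatorname{div}_e X = \sum_{u\in U}w(e_u)\langle t,X(u)\rangle\le \sum_{u\in U} w(e_u),
\]
which is exactly the boundary length of $K$. This is the monotonicity-formula argument, with the density at $v$ bounded by the boundary mass. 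On a surface, I would use the distance function from $v$ inside the convex disk. Its Hessian restricted to geodesics is nonnegative when the curvature is $\le 0$, but it can be slightly negative for positive curvature.

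The main obstacle is this sign of $\operatorname{Hess}\rho$ under positive curvature. I would first try to avoid the divergence argument altogether with a purely combinatorial flow decomposition. Balancing at each vertex lets one pair incident edge-weights into ``almost straight'' through-paths: project onto the outgoing direction and use the fact that $\rho$ strictly increases along geodesics leaving $v$ inside a convex ball, because the distance function has no interior critical points on geodesics there. This yields $\sum_{e\ni v}w(e)$ edge-disjoint weighted walks from $v$, along which $\rho$ is monotone, each ending at the boundary, which gives the bound. The step to check carefully is that at each balanced vertex $u\ne v$ the weight of edges along which $\rho$ increases is at least the weight along which it decreases. This follows from balancing projected on $\nabla\rho(u)$ only up to the factor $\langle t,\nabla\rho\rangle$, so I expect to need the convexity radius (via $r<\operatorname{conv}(M)/2$) to make this comparison rigorous.
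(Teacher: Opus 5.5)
In the plane, your first-variation computation with $X=\nabla|x-v|$ is correct, but that case is already Lemma~1. The proposition concerns nets in a convex disc on a curved surface, and there the proposal has a genuine gap. The combinatorial flow you fall back on needs, at every balanced vertex $u\ne v$, that the weight of edges along which $\rho$ increases is at least the weight of those along which it decreases. This is false even in the flat plane. Take a balanced triple junction $u$ with unit-weight edges in the direction $\nabla\rho(u)$ and in the two directions at angle $2\pi/3$ from it. It has one increasing edge and two decreasing ones (each with $\langle t,\nabla\rho(u)\rangle=-\tfrac12$), so two units of monotone flow can arrive at $u$ while only one can leave. Such junctions are common, e.g.\ in honeycomb-like nets. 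Since the failure is local and Euclidean, no appeal to the convexity radius can repair it. A non-monotone flow of value $\sum_{e\ni v}w(e)$ to $\partial B$ does exist. But it exists only because every cut separating $v$ from $\partial B$ has weight at least $\sum_{e\ni v}w(e)$, and proving that cut bound is the same divergence argument applied to a subnet. So the combinatorial route does not bypass the analysis.

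The obstacle that made you abandon the divergence argument is not real. If $K\le 1$ and $0<\rho<\min\{\mathrm{inj}(v),\pi/2\}$, Hessian comparison gives $\operatorname{Hess}\rho\ge\cot\rho\,(g-d\rho\otimes d\rho)\ge 0$. Equivalently, small distance circles about $v$ are convex, which is exactly what the convexity-radius hypothesis provides. Positive curvature only spoils the stronger inequality $\rho\operatorname{Hess}\rho\ge g-d\rho\otimes d\rho$ that exact monotonicity of $\mathrm{length}(N\cap B_v(s))/s$ requires, and you do not need it.

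Here $N\subset B_a(r_*)\subset B_v(2r_*)$ with $2r_*<\operatorname{conv}(M)/1000$. So $\rho$ is smooth on the net away from $v$, and every edge at $v$ is a radial geodesic, giving $\langle\nabla\rho,t_{ev}\rangle=1$. Take $X=\nabla\rho$, not $\nabla\tfrac12\rho^2$, which vanishes at $v$ and loses exactly the term you need. Integrating $\tfrac{d}{ds}\langle\nabla\rho,\tau_e\rangle=\operatorname{Hess}\rho(\tau_e,\tau_e)$ along each geodesic edge of the component of $N$ containing $v$ then gives your planar identity verbatim:
\[
\sum_{e\ni v}w(e)+\sum_e w(e)\int_e\operatorname{Hess}\rho(\tau_e,\tau_e)\,ds=-\sum_{u\in U}\langle\nabla\rho(u),I(u)\rangle\le\sum_{u\in U}i(u).
\]
The integrals are nonnegative, and the right-hand side is at most the boundary length of the dual complex. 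Repaired this way, your argument is a more direct alternative to the paper's proof. The paper instead extends $N$ by rays into a glued flat exterior and appeals to the monotonicity formula for the resulting infinite net.
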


\begin{proof}
We will start from the geodesic net $N$ in a convex metric ball $B$ in $M$ of radius $r_*\leq {1\over 2}conv(M)$ constructed in section 2.2.
Consider the Euclidean plane and cut out a disc $D$ of radius $\rho$ such that the length of its boundary is equal to the length of the boundary of $B$.
One can choose geodesic normal coordinates
on $B$ and use them to smooth out
the Riemannian metric on $B\cup (\R^2\setminus D)$ on the collar $A$ of $\partial(\R^2\setminus D)$ in $\R^2\setminus D$ of radius $\rho$
so that all concentric metric circles centered at the center of $B$ are convex, and the metric on $\R^2\setminus (D\cup A)$ remains flat.  Now we can extend all geodesics (edges) in $N$ intersecting $\partial B$ to the newly added part $\R^2\setminus D$ with the smoothed-out  metric.
It is easy to see that they will cross the annulus $A$ and become straight lines in the flat part of $\R^2\setminus D$. 
We obtain an extension $N'$ of $N$ where geodesic rays extended to infinity have the same weights as the corresponding edges of $N$.
Consider the number of points of intersection of $N'$ with the circle of a very large radius centered at any vertex of $N$ counted with multiplicities of the corresponding edges $N'$. This number will be equal $p$,
the number of vertices of $N$ on $\partial B$
counted with the multiplicities of incident edges.

The monotonicity formula for $N'$ implies that for each vertex $v$ of $N$ regarded
as a vertex of $N'$ the ratio ${{\rm length}(N'\cap B_v(r))\over r}$ is an increasing
function of $r$. (Here $B_v(r)$ denotes the metric ball of radius $r$ centered at $v$ in $B\cup (\R^2\setminus D)$. When $r\longrightarrow 0$, this ratio approaches the degree of $v$, when $r\longrightarrow\infty$, this ratio approaches $p$. This implies that $p\geq {\rm deg}(v)$.

But $p$ is the length of the boundary of the dual complex of $N$, and ${\rm deg}(v)$ is the length
of the boundary of the convex polygon in the dual complex corresponding to the vertex $v$ of $N$.

\end{proof}

\section{Subdividing a polygon with sides of integer length}


In this section, we demonstrate that Theorem 1 reduces to its
``thin" strip version, when all edges are nearly parallel, and all angles of all polygons are either very small or almost $\pi$.






Consider a family of polygons $Pol_n$, $n=1,2,\ldots $ in $\R^2$ with sides of integer length and perimeters $\leq p$. Assume that there exist subdivisions of each $Pol_n$ into $N_n$ convex polygons
with side of integer length, and $N_n\longrightarrow\infty$.
Consider the corresponding plane graphs $G_n$ formed by all vertices and edges of $Pol_n$ and all polygons of the subdivision. When we construct $G_n$, we use new vertices of degree $2$ to subdivide each integer side
of length $l$ into $l$ edges of length $1$.

Quantities that are bounded by a function of $p$ (but not of $n$) will be called {\it bounded}. For example,
observe that the isoperimetric inequality implies that the areas, $A_n$, of the  polygon are bounded (by $\frac{p^2}{4\pi}$).

We will be distinguishing between {\it boundary} vertices of $G_n$, namely the vertices of the original polygon on the boundary
of $Pol_n$, and the rest of vertices that are called {\it inner} vertices.

%

A polygon of fixed perimeter cannot 
contain a convex polygon with a larger perimeter. Therefore:
\begin{lemma}
    The polygons in the subdivision have perimeters $\leq p$.
\end{lemma}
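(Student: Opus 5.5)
The plan is to derive the lemma from the classical monotonicity of perimeter under inclusion of convex sets. If $K\subseteq L$ are convex planar sets, then $\mathrm{per}(K)\le\mathrm{per}(L)$. The subtlety is that $Pol_n$ itself need not be convex; the lemma only states that it has perimeter $\le p$. So I would first pass from $Pol_n$ to its convex hull $H_n=\mathrm{conv}(Pol_n)$. Each cell $Q$ of the subdivision is a convex polygon contained in $Pol_n\subseteq H_n$. The boundary of $H_n$ is obtained from $\partial Pol_n$ by replacing each "pocket" of the boundary curve with a straight chord. By the triangle inequality this does not increase length, so $\mathrm{per}(H_n)\le \mathrm{per}(Pol_n)\le p$.

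The main step is then the monotonicity $\mathrm{per}(Q)\le \mathrm{per}(H_n)$ for convex $Q\subseteq H_n$. I would argue by cutting. For each side $s$ of $Q$, take the line through $s$. It bounds a half-plane containing $Q$, and we replace the current convex set $C$ (initially $H_n$) by its intersection with that half-plane. Each such cut removes a boundary arc of $C$ with endpoints $x,y$ and replaces it by the segment $[x,y]$. Since the segment is the shortest curve joining $x$ and $y$, the perimeter does not increase. After cutting along all sides of $Q$ we obtain exactly $Q$. Hence $\mathrm{per}(Q)\le \mathrm{per}(H_n)\le p$.

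Alternatively, one can use Cauchy's formula $\mathrm{per}(K)=\int_0^{\pi} w_K(\theta)\,d\theta$, where $w_K(\theta)$ is the width of $K$ in direction $\theta$. Widths are monotone under inclusion, and the width of the convex hull of $Pol_n$ equals the width of $Pol_n$ itself.

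The only point requiring care is the convex hull reduction. One has to check that the perimeter of the hull of a (possibly nonconvex) simple polygon is at most the polygon's perimeter. This follows because $\partial H_n$ consists of segments of $\partial Pol_n$ and chords, and each chord is shorter than the boundary arc of $Pol_n$ that it spans. I expect no real obstacle beyond this remark. Note also that the lemma makes no use of the integrality of side lengths.
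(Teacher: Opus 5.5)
Your proof is correct and follows essentially the paper's approach. The paper justifies the lemma in one line, by the fact that a polygon of perimeter $p$ cannot contain a convex polygon of larger perimeter. Your convex-hull reduction (needed since $Pol_n$ may be nonconvex) together with the half-plane cutting argument is a careful proof of exactly that fact.
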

As each face of $G_n$ corresponds to a vertex of the original net, we obtain the following corollary:
\begin{corollary}
    The degree of each balanced vertex in a plane geodesic net counted with multiplicities equal to the integer weights of incident edges does not exceed $p$.
\end{corollary}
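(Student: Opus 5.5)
We argue through the dual complex. Given a plane geodesic net $N$ with extended imbalance $p$, we first pass to a convex extension $N'$ as in Theorem 4. By part (3) every balanced vertex $v$ of $N$ is still a balanced vertex of $N'$. Also, $v$ has in $N'$ at least the incident edges it had in $N$: new rays may cut an old edge into pieces, but the piece adjacent to $v$ keeps its weight. So the weighted degree $\sum_{e\ni v} w(e)$ can only stay the same or grow (it grows only if a new ray passes through $v$, which we may avoid, or simply absorb). Hence it is enough to bound weighted degrees of balanced vertices of $N'$ by $p$.

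Next we apply Theorem 5. The dual complex $K$ of $N'$ is a flat disc embedded in $\R^2$, bounded by a convex polygon with $p$ unit sides, so its perimeter is $p$. It is tiled by convex cells, and the cell dual to a balanced vertex $v$ has one side of length $w(e)$ for each incident edge $e$. Therefore the perimeter of that cell equals the weighted degree of $v$.

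The cell is a convex polygon contained in the polygon bounding $K$, and that polygon has perimeter $p$. By Lemma 1, which is the fact that a convex set inside another convex set has no larger perimeter (for instance by Cauchy's formula or by nearest-point projection), the perimeter of the cell is at most $p$. This gives the claimed bound.

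The only delicate point is checking that the weighted degree equals the perimeter of the dual cell and is not changed by the extension. If a vertex of $N$ happened to lie on an added ray, we would perturb the choice of ray directions slightly, or note that the count only increases, which is harmless for an upper bound. Alternatively, the monotonicity argument from Proposition 1 gives $\deg(v)\le p$ directly.
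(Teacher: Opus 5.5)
Your proposal is correct and follows the paper's own argument. The paper also passes to the dual complex of the convex extension, observes that the cell dual to $v$ has perimeter equal to the weighted degree of $v$, and bounds that perimeter by $p$ via Lemma 1. It likewise mentions the monotonicity-formula alternative you give at the end.
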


Of course, this corollary also follows immediately from the {\it monotonicity formula} applied to the (convex extension of the original) geodesic net
with attached $p$ rays. This formula says
that for each point $x$ on the stationary infinite geodesic net $N$ the ratio
$\frac{length\ (N\bigcap B(x,r))}{r}$ is an
increasing function of $r$. When $x$ is a vertex, and $r$ is very small, this ratio is approximately equal to the degree of $v$, and when $r$ is large this ratio approaches $p$ (see the proof of Proposition 1.)


\begin{definition}
    An $\varepsilon$-polygon is a convex polygon with sides that have integer lengths
    with all angles being either greater than $\pi - \varepsilon$ or smaller than $\varepsilon$. We will assume that $\varepsilon$ is smaller than $\dfrac{\pi}{6}$ for convenience reasons; in practice the chosen $\varepsilon$ will be very small.
\end{definition}

\begin{lemma}
    For any fixed $\varepsilon<\frac{\pi}{6}$ the number of non-$\varepsilon$-polygons in the subdivision of $Pol_n$ does not exceed 
    $\frac{p^2}{2\varepsilon}$.
\end{lemma}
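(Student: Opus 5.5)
Each non-$\varepsilon$-polygon has area bounded below by a constant depending only on $\varepsilon$. Since the total area of $Pol_n$ is at most $\frac{p^2}{4\pi}$ by the isoperimetric inequality, this caps the number of such polygons.

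Let $Q$ be a non-$\varepsilon$-polygon of the subdivision. By definition $Q$ is convex with integer sides, so every side has length $\ge 1$. It also has a vertex $X$ whose angle $\theta$ satisfies $\varepsilon\le\theta\le\pi-\varepsilon$. Let $XY$ and $XZ$ be the two sides of $Q$ at $X$; each has length at least $1$. Choose $Y'\in XY$ and $Z'\in XZ$ with $|XY'|=|XZ'|=1$. By convexity the triangle $XY'Z'$ lies in $Q$, so
\[
\mathrm{area}(Q)\ \ge\ \tfrac12\sin\theta\ \ge\ \tfrac12\sin\varepsilon .
\]
We now need $\sin\varepsilon$ compared with $\varepsilon$. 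For $\varepsilon<\pi/6$, concavity of $\sin$ on $[0,\pi/2]$ gives
\[
\sin\varepsilon\ \ge\ \frac{\sin(\pi/6)}{\pi/6}\,\varepsilon=\frac{3}{\pi}\,\varepsilon .
\]
Hence $\mathrm{area}(Q)\ge \frac{3\varepsilon}{2\pi}$.

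The polygons of the subdivision have disjoint interiors and lie inside $Pol_n$, whose area is at most $\frac{p^2}{4\pi}$. So the number $k$ of non-$\varepsilon$-polygons satisfies
\[
k\cdot\frac{3\varepsilon}{2\pi}\ \le\ \frac{p^2}{4\pi},
\qquad\text{that is,}\qquad k\ \le\ \frac{p^2}{6\varepsilon}\ \le\ \frac{p^2}{2\varepsilon}.
\]

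The only step needing care is the lower bound on $\sin\theta$. It is needed at both ends of the range $[\varepsilon,\pi-\varepsilon]$. This holds because $\sin(\pi-\varepsilon)=\sin\varepsilon$ and $\sin$ is symmetric about $\pi/2$, so $\sin\theta\ge\sin\varepsilon$ on the whole interval. The linear estimate of $\sin\varepsilon$ then uses the hypothesis $\varepsilon<\pi/6$. The argument shows that the stated constant $\frac{p^2}{2\varepsilon}$ is not tight.
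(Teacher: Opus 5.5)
Your proof is correct and follows the paper's argument: a triangle with two unit sides at a vertex whose angle lies in $[\varepsilon,\pi-\varepsilon]$ gives each non-$\varepsilon$-polygon area at least $\frac12\sin\varepsilon$, and the isoperimetric bound $A\le \frac{p^2}{4\pi}$ then caps their number. The only difference is that your concavity estimate $\sin\varepsilon\ge\frac{3}{\pi}\varepsilon$ yields the sharper bound $\frac{p^2}{6\varepsilon}$. The paper uses a looser estimate of $\sin\varepsilon$, passing through $\frac{3p^2}{2\pi\varepsilon}$ on the way to $\frac{p^2}{2\varepsilon}$.
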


\begin{proof}
    Indeed, any non-$\varepsilon$-polygon has area greater than the triangle with two sides of length $1$ with the angle $\epsilon$ (or $\pi-\epsilon$) between them.
    Then the number $N_{\text{non-}\varepsilon}$ of non-$\varepsilon$-polygons satisfies $N_{\text{non-}\varepsilon} \sin(\varepsilon)/2 \le A$. Therefore, $N_{\text{non-}\varepsilon} \le \dfrac{2A}{\sin(\varepsilon)}\le
    \dfrac{p^2}{2\pi\ \sin(\varepsilon)}< \dfrac{3p^2}{2\pi\varepsilon}<\dfrac{p^2}{2\varepsilon}$.
\end{proof}

\par\noindent

\begin{definition}
    A vertex $v$ of the 
    $G_n$  is called $\varepsilon$\textit{-vertex} if
    (a) 
    all angles between consecutive edges incident to this vertex and measured in the counterclockwise direction are either greater than $\pi - \varepsilon$, 
    or smaller than $\varepsilon$; (b) Exactly two of these angles are greater than $\pi-\varepsilon$ (see Figure~\ref{fig:eps-polygon}). 
\end{definition}

\begin{figure}[ht]
\centering
\resizebox{\ifdim\width>\textwidth \textwidth\else\width\fi}{!}{%
\begin{tikzpicture}[
   >={Stealth[length=2.2mm]},
   edge/.style={line width=0.9pt, black},
   poly/.style={fill=blue!8, draw=blue!65!black, line width=1.0pt},
   ang/.style={line width=0.6pt, red!70!black},
   dot/.style={circle, fill=black, inner sep=1.3pt},
   scale=1.0]

\begin{scope}
  \filldraw[poly] (0,0) -- (1.5,0.20) -- (3,0.28) -- (4.5,0.20) -- (6,0)
                       -- (4.5,-0.20) -- (3,-0.28) -- (1.5,-0.20) -- cycle;
  \foreach \p in {(0,0),(1.5,0.20),(3,0.28),(4.5,0.20),(6,0),(4.5,-0.20),(3,-0.28),(1.5,-0.20)}
     \node[dot] at \p {};
  \node[red!70!black,font=\footnotesize,left] at (-0.12,0) {$<\varepsilon$};
  \node[red!70!black,font=\footnotesize,above right] at (6.05,0.02) {$<\varepsilon$};
  \node[red!70!black,font=\footnotesize,above] at (3,0.36) {$>\pi-\varepsilon$};
  \node[red!70!black,font=\footnotesize,below] at (1.5,-0.30) {$>\pi-\varepsilon$};
  \node[font=\footnotesize] at (3,-1.15) {(a)};
\end{scope}

\begin{scope}[shift={(11.4,0)}]
  \coordinate (v) at (0,0);
  \foreach \a in {-7,0,7} \draw[edge] (v) -- (\a:2.2);
  \foreach \a in {173,180} \draw[edge] (v) -- (\a:2.2);
  \draw[ang] (v) ++(7:1.0) arc (7:173:1.0);
  \node[red!70!black,font=\footnotesize,above] at (90:1.12) {$>\pi-\varepsilon$};
  \draw[ang] (v) ++(180:0.8) arc (180:353:0.8);
  \node[red!70!black,font=\footnotesize] at (270:0.45) {$>\pi-\varepsilon$};
  \node[red!70!black,font=\footnotesize,right] at (2.3,0.14) {$<\varepsilon$};
  \node[red!70!black,font=\footnotesize,left] at (-2.3,0.10) {$<\varepsilon$};
  \node[dot] at (v) {};
  \node[below right] at (v) {$v$};
  \node[font=\footnotesize] at (0,-1.15) {(b)};
\end{scope}

\end{tikzpicture}
}
\caption{(a) an $\varepsilon$-polygon; (b) an $\varepsilon$-vertex.}
\label{fig:eps-polygon}
\end{figure}

\par\noindent
{\bf Observation 1:} Observe, that each of the angles formed by a pair of consecutive edges in this definition is
an angle in one of the polygons of the partition. Therefore, all these angles are strictly less than $\pi$.
\par\noindent
{\bf Observation 2:} Let $A$ be an $\epsilon$-vertex. Assume that the angles between all pairs of edges incident to $A$ are defined so that
their values are in the interval $[0,\pi]$. Then these values will be
in the set 
$(0, 2\epsilon]\bigcup (\pi-\epsilon,\pi)$. 
(Again, here we calculate each angle clockwise or counterclockwise to ensure that
the value is in $[0,\pi]$.)
Indeed,
we have two ``large angles" between $\pi-\epsilon$ and $\pi$. Therefore, the total sum
of all small angles is at most $2\epsilon$. Each angle between two edges incident to $A$
will be either a sum of values of a collection of small angles, or a sum of values of one large angle and, possibly, several small angles.
In the first case, it will be at most $2\epsilon$, in the second greater than $\pi-\epsilon$.

\begin{lemma}
    There is a bounded number of non-$\varepsilon$-vertices in $G_n$. More precisely, for $\varepsilon<\frac{\pi}{6}$ it is bounded by $\frac{p^2}{2\epsilon}+p<\frac{p^2}{\epsilon}$.
\end{lemma}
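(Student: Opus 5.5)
The plan is to charge each non-$\varepsilon$-vertex either to a non-$\varepsilon$-polygon of the subdivision or to the boundary of $Pol_n$. Then Lemma 3 gives at most $\frac{p^2}{2\varepsilon}$ for the first kind. For the second kind we use that the boundary of $Pol_n$, after subdivision into unit edges, has at most $p$ vertices, which accounts for the additive $p$.

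First, the boundary vertices: there are at most $p$ of them, since the perimeter is $\le p$ and each boundary vertex starts a unit boundary edge. We count all of them as potentially non-$\varepsilon$.

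Next, take an inner vertex $v$. The angles between consecutive incident edges are angles of polygons of the partition, each $<\pi$ by Observation 1, and they sum to $2\pi$. Suppose every polygon incident to $v$ is an $\varepsilon$-polygon. Then every angle at $v$ is either $>\pi-\varepsilon$ or $<\varepsilon$. The number $k$ of large angles satisfies $k(\pi-\varepsilon)<2\pi$, and since $\varepsilon<\pi/6$ this gives $k\le 2$. If $k\le 1$, the total angle would be less than $\pi+(\text{sum of small angles})$. To bound the small angles, use Corollary 2 (equivalently Lemma 2 applied to the degree): the number of incident edges is at most $p$, so the small angles contribute less than $p\varepsilon$. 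This is where a subtlety appears. To force $k=2$ we need $\pi + p\varepsilon\le 2\pi$, i.e.\ $\varepsilon\le\pi/p$, which may fail for general $\varepsilon<\pi/6$. So I will try a cleaner argument: a vertex of degree $2$ inside an edge has two angles both equal to $\pi$, which is excluded by Observation 1 and would instead correspond to the subdivision points on sides. Those points lie on the sides of polygons, where the angle is exactly $\pi$. I would treat such points as $\varepsilon$-vertices by convention, or note that they sit on boundary sides. Otherwise, with $k\le 1$ and all remaining angles $<\varepsilon$, one needs more than $\pi/\varepsilon$ small angles, hence degree $>\pi/\varepsilon$. If that is impossible given the degree bound $p$, we conclude $k=2$ and $v$ is an $\varepsilon$-vertex. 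When the degree bound is not strong enough, I would instead charge $v$ to its incident polygons: each incident polygon contributes a small angle $<\varepsilon$ at $v$, while an $\varepsilon$-polygon has exactly two small angles, so such vertices are the sharp tips of $\varepsilon$-polygons, and these are controlled by area.

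Hence an inner non-$\varepsilon$-vertex has at least one angle in $[\varepsilon,\pi-\varepsilon]$, and that angle is an angle of a non-$\varepsilon$-polygon. Each non-$\varepsilon$-polygon has at most $p$ vertices, which would give $p\cdot\frac{p^2}{2\varepsilon}$, too large. So I will refine the count through area: each angle $\theta\in[\varepsilon,\pi-\varepsilon]$ at $v$ in a polygon with unit-or-longer sides contributes the disjoint triangle region near $v$ of area $\ge\frac12\sin\varepsilon$, cut by the two unit segments from $v$ and scaled by $1/2$ to ensure disjointness among distinct vertices of the same polygon. Summing these over distinct vertices bounds their number by $\frac{2A}{\sin\varepsilon}\cdot(\text{const})$, exactly as in the proof of Lemma 3, giving $\frac{p^2}{2\varepsilon}$.

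The main obstacle is this disjointness of the charged triangles and the matching constant. I expect it to work by using the corner triangles with legs $1/2$ inside convex polygons of sides $\ge1$, adjusting constants via $\sin\varepsilon>3\varepsilon/\pi$. Adding the $p$ boundary vertices and using $p\le p^2/(2\varepsilon)$ yields the final bound $<p^2/\varepsilon$.
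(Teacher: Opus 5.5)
The gap is the case of an inner vertex $v$ satisfying condition (a) but not (b): every angle at $v$ is $<\varepsilon$ or $>\pi-\varepsilon$, but at most one is large. You try to exclude this case using $\deg v\le p$ from Corollary 2. However, Corollary 2 bounds the degree of a balanced vertex of the net, i.e.\ the perimeter of a cell. Vertices of $G_n$ correspond to faces of the net and have unbounded degree; this is precisely Corollary 4, on which the rest of Section 4 relies. So the estimate that the small angles contribute less than $p\varepsilon$ is false for every $\varepsilon$, not only when $\varepsilon>\pi/p$. Your fallback of charging $v$ to sharp tips of $\varepsilon$-polygons gives no bound either: $\varepsilon$-polygons, and hence their tips, are unboundedly many and are not controlled by area. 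Consequently your key claim, that every inner non-$\varepsilon$-vertex has an angle in $[\varepsilon,\pi-\varepsilon]$, is not established.

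The missing idea, which the paper uses, is to show that such a vertex itself consumes a definite amount of area. Suppose at most one angle at $v$ is large.
Then the small angles sum to at least $\pi$, and each small sector is a corner of a distinct convex cell with sides of length $\ge 1$. Since $\sin\theta\ge\frac{3}{\pi}\theta$ for $\theta\le\frac{\pi}{6}$, the fan of corner triangles at $v$ has area $\sum_i\frac12\sin\theta_i\ge\frac32$ with unit legs (the paper's version), or $\ge\frac38$ with your legs of length $\frac12$. With half legs, all corner triangles at all corners of all cells are interior-disjoint. So every inner non-$\varepsilon$-vertex owns area at least $\frac18\sin\varepsilon$, and your count closes.

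This count gives $\frac{8A}{\sin\varepsilon}\le\frac{2p^2}{3\varepsilon}$, not $\frac{p^2}{2\varepsilon}$. The final bound $<\frac{p^2}{\varepsilon}$ still follows because $3\varepsilon<p$, and that is all that is used later. To get the stated intermediate bound $\frac{p^2}{2\varepsilon}+p$, you need the paper's unit-leg triangles, each counted for at most three vertices.
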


\begin{proof}
    The number of boundary vertices does not exceed $p$. Consider inner non-$\varepsilon$-vertices $v$. If one of the incident angles to $v$ is greater than $\epsilon$ or is less than $\pi-\varepsilon$, then the incident triangle with two sides of length $1$ has the area at least ${\sin\varepsilon\over 2}\geq {3\over 2\pi}\varepsilon$. These triangles for different vertices intersect only when they coincide, and they can coincide only for at most three vertices.
    If all non-$\varepsilon$-vertices were like that, their number would be bounded by $3{{p^2\over 4\pi}\over {3\varepsilon\over 2\pi}}={p^2\over 2\varepsilon}.$
    If a $v$ vertex satisfies (a), but not (b), then a half-plane incident to $v$ would be divided
    by edges into triangular domains with angles $\leq\varepsilon\leq{\pi\over 6}$. It is easy to see that these triangles cannot be faces but are parts of polygons with more than three sides. The union of these triangles in a half-plane has area at least $1.5$. This union does not intersect
    similar unions for other vertices
    that satisfy (a) but not (b). It is easy to see that it does not intersect triangles with angles $>\varepsilon$ or $<\pi-\varepsilon$ incident to non-$\varepsilon$ vertices that were considered above. Therefore,
    non-$\varepsilon$ vertices of this type waste too much area, and
    the number of non-$\varepsilon$ vertices is the maximal, when such vertices do not arise at all.

\end{proof}

 \begin{definition}
    An $\varepsilon$-path is a path in $G_n$ such that all of its inner vertices are $\varepsilon$-vertices, and the angle between any two consecutive edges is greater than $\pi - \varepsilon$. (Here we define the angles between edges so that
    they are always in the interval $[0,\pi]$.) If both endpoints
    of this path are non-$\varepsilon$ vertices, we call it a maximal $\varepsilon$-path.
 \end{definition}
    
    Note, that the definition of $\varepsilon$-vertices implies that each $\varepsilon$-path can be extended to a maximal $\varepsilon$-path. In particular, each edge of $G_n$ is
    on at least one maximal $\varepsilon$-path.

\begin{lemma}
    Assume that $\varepsilon \leq \frac{\pi}{p}$. 
    Then 
    the length $l$ of each $\epsilon$-path is less than $p$.
    
\end{lemma}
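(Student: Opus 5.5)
We need to show an ε-path has length less than p, where the path lies in the polygon Pol_n of perimeter at most p and ε ≤ π/p. The plan is to show that an ε-path is a nearly straight broken line, hence nearly as long as the distance between its endpoints, and that this distance is less than p/2.

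\emph{Step 1: the total turning is small.} Each edge of $G_n$ has length $1$, so an $\varepsilon$-path with $l$ edges is a broken line of length $l$. Its $l-1$ inner turning angles each have absolute value less than $\varepsilon$. If the turns all had the same sign, the path would be convex, and its total turning after $k$ edges would be at most $k\varepsilon$. For a general path we cannot use the turning sum directly, so we first handle a path with $l\le p$ edges. For such a path, the direction of every edge differs from the direction of the first edge by less than $(l-1)\varepsilon<p\cdot\pi/p=\pi$.

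\emph{Step 2: the path is long relative to its chord, but the chord is short.} The direction bound from Step 1 is too weak as stated, so we instead compare each edge with the middle edge $e_m$ of the path. Every edge then deviates in direction from $e_m$ by less than $(l/2)\varepsilon\le \pi/2$ whenever $l\le p$. Consequently the projection of the path onto the line of $e_m$ is strictly monotone. The two endpoints of the path lie in $Pol_n$. The width of $Pol_n$ in any direction is at most half its perimeter, so it is at most $p/2$.

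\emph{Step 3: contradiction via a short subpath.} To obtain the quantitative bound, suppose some $\varepsilon$-path has $l\ge p$ edges, and take a subpath with exactly $\lceil p\rceil$ consecutive edges. Let $\theta_i$ be the angle of edge $i$ relative to the middle edge. Then $|\theta_i|<|i-m|\varepsilon$, so the projection of the subpath onto the middle direction is at least $\sum_i\cos(|i-m|\pi/p)$. This is a Riemann sum of $\cos$ over roughly $(-\pi/2,\pi/2)$ scaled by $p/\pi$, which is about $2p/\pi>p/2$. That exceeds the width bound $p/2$ of $Pol_n$, a contradiction. Hence $l<p$.

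The main obstacle is getting the constants right: we must check that $\sum_{j}\cos(j\pi/p)$ over a window of $p$ edges centred at the middle edge genuinely exceeds $p/2$ for every $p$, including small $p$. We will verify this by comparing the sum with the integral, using concavity of $\cos$ on $[-\pi/2,\pi/2]$. The comparison gives at least $(2/\pi)(p-1)+1$, which is greater than $p/2$ because $2/\pi>1/2$.
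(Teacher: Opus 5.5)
Your argument follows the paper's route. You bound from below the displacement of a window of consecutive unit edges by projecting onto a central direction, the worst case being uniform turning by $\pi/p$, and you compare this with the bound $p/2$ on the width of $Pol_n$. Projecting onto the middle edge and using $\cos\theta_i\ge\cos(|i-m|\pi/p)$ is precisely what justifies the paper's unproved claim that maximal equal turning minimizes $\|b_k\|$. The paper uses a window of $k>2p/3$ edges and gets $N\le 2p/3$, while your window of $p$ edges gives exactly $l<p$. Steps 1 and 2 are not needed; Step 3 alone is the proof.

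The closing estimate, however, is false as written.
\begin{itemize}
\item The centred sum equals $1/\sin(\pi/(2p))$ for odd $p$ and $\cot(\pi/(2p))$ for even $p$, with offsets $-(p/2-1),\dots,p/2$.
\item Both values are smaller than $(2/\pi)(p-1)+1$ for every $p\ge 3$. For $p=3$ the sum is $2$, while $4/\pi+1\approx 2.27$.
\item Comparing with the integral via concavity (the midpoint rule) only gives $\sum\ge 2p/\pi$ for odd $p$. The extra $1-2/\pi$ in your bound is spurious.
\end{itemize}

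The inequality you actually need is still true, and either of the following fixes it.
\begin{itemize}
\item From the closed forms: $1/\sin(\pi/(2p))>2p/\pi>p/2$ for odd $p$, and $\cot(\pi/(2p))\ge (2p/\pi)\cos(\pi/8)>p/2$ for even $p\ge 4$.
\item Use concavity pointwise: $\cos t\ge 1-2|t|/\pi$ on $[-\pi/2,\pi/2]$. This sums to $p/2+1/(2p)$ for odd $p$ and to exactly $p/2$ for even $p$. The strict inequalities $|\theta_i|<|i-m|\varepsilon$ for $i\ne m$ then make the projection strictly exceed $p/2$, contradicting the width bound.
\end{itemize}
With this replacement your proof is complete.
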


\begin{proof}
    Let $e_i$, $i=1,2,\ldots, N$, be the unit vectors on the edges in a $\varepsilon$-path in the order in which they appear. (Here we represent an edge of length $l>1$ as the sum of $l$ unit vectors.) Consider vectors $b_i=e_1+\ldots +e_i$.

    
    For each $k$ project vectors $e_1,\ldots, e_k$ to $b_k$. The lengths of these projections will be equal
    to $\cos\angle(e_i, b_k)$. Observe that $\Vert b_k\Vert< {p\over 2}$. In order to prove an upper
    bound on $N$, we need to find $k$ such that $\Vert b_k\Vert\geq {p\over 2}$. In this case we will be able to conclude that $N<k$. Thus, we need a lower bound for $\Vert b_k\Vert$. In order to make $\Vert b_k\Vert$ as small as possible, we need to keep
    the angles between $e_i$ and $b_k$ as large as possible. Obviously, we need to have angles between $e_i$ and $e_{i+1}$ to have maximal possible values,
    namely, ${\pi\over p}$. In this case, 
    $\Vert b_k\Vert=\Sigma_{j=1}^k\cos{\pi j\over p}={\sin {k\pi\over 2p}\over \sin {\pi\over 2p}}$.
    If $k>{2\over 3}p$, then the last
    expression is greater than or equal to
    ${\sin {2p\pi\over 3(2p)}\over \sin{\pi\over 2p}}\geq {\sqrt{3}\over {2\pi\over 2p}}={p\sqrt{3}\over \pi}>{p\over 2}$. Thus, $N\leq {2\over 3}p<p$.
\end{proof}

\begin{corollary}
    Assume that $\varepsilon\leq \frac{\pi}{p}$. Then the number of maximal $\varepsilon$-paths is greater then $\frac{\vert V_n\vert}{p}
    -\frac{p}{\varepsilon}$,
    where $\vert V_n\vert$ denotes the number of vertices of $G_n$.
    Also, the number of maximal $\varepsilon$-paths is at least $\frac{\vert E_n\vert}{p}$, where $\vert E_n\vert$ denotes the number of edges of $G_n$.
\end{corollary}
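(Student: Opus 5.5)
The plan is a double-counting argument. Every edge of $G_n$ is covered by maximal $\varepsilon$-paths, and the previous lemma says each $\varepsilon$-path has length less than $p$.

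\textbf{Edge bound.} By the remark after the definition of $\varepsilon$-paths, each edge lies on at least one maximal $\varepsilon$-path. All edges of $G_n$ have length $1$, since integer sides were subdivided into unit edges. So a maximal $\varepsilon$-path of length $l$ contains exactly $l$ edges, and by the previous lemma (valid since $\varepsilon\le\pi/p$) it contains at most $p$ edges. If $P$ is the number of maximal $\varepsilon$-paths, then counting edges with their covering paths gives $|E_n|\le pP$, that is, $P\ge |E_n|/p$.

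\textbf{Vertex bound.} I would first make sure maximal paths are well defined. At an $\varepsilon$-vertex, Observation 2 shows that the incident edges split into two ``bundles'' of nearly opposite directions. Continuing an $\varepsilon$-path through such a vertex means leaving along any edge of the opposite bundle. So the extension is possible but possibly not unique, and this is fine because we only need a lower bound. Now count vertices. Every vertex of $G_n$ lies on some edge, so it lies on some maximal $\varepsilon$-path, and such a path has at most $p+1$ vertices. Only the inner $\varepsilon$-vertices need to be counted against the paths. By the lemma bounding non-$\varepsilon$-vertices, there are fewer than $p^2/\varepsilon$ of them. Each $\varepsilon$-vertex is an inner vertex of some maximal path, and each path has at most $p-1$ inner vertices. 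Hence
\[
|V_n|-\frac{p^2}{\varepsilon}\le (p-1)P< pP,
\]
which gives $P>|V_n|/p-p/\varepsilon$.

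\textbf{Main obstacle.} The delicate point is the claim that every $\varepsilon$-vertex $v$ is an \emph{inner} vertex of a maximal $\varepsilon$-path, rather than only an endpoint. To handle it, take any edge $e$ at $v$ and extend it through $v$ into the opposite bundle. That choice satisfies the angle condition: by Observation 2, angles across the bundles are greater than $\pi-\varepsilon$. Then extend the path in both directions until it reaches non-$\varepsilon$-vertices. The extension must terminate, because the previous lemma bounds the length of every $\varepsilon$-path; in particular, an $\varepsilon$-path cannot revisit vertices indefinitely. The strictness of the inequality comes from $(p-1)P<pP$, and also from the strict bound $p^2/(2\varepsilon)+p<p^2/\varepsilon$.
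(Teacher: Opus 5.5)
Your proposal is correct and follows the paper's own argument. For the vertex bound, you double-count $\varepsilon$-vertices as inner vertices of maximal $\varepsilon$-paths: each path has at most $p-1$ of them by Lemma 4, and there are at least $|V_n|-p^2/\varepsilon$ of them by Lemma 3. For the edge bound, you use that every edge lies on some maximal $\varepsilon$-path, and each such path has fewer than $p$ unit edges. Your extra justification fills in what the paper states in one line: via Observation 2, every $\varepsilon$-vertex can be passed through as an inner vertex, and via Lemma 4, extensions terminate at non-$\varepsilon$-vertices.
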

\begin{proof}
 Indeed, for each $\varepsilon$-vertex $v_2$ there are two adjacent vertices $v_1$ and $v_3$ such that $v_1v_2v_3$ is a $\varepsilon$-path. This path can be extended in both directions as $\varepsilon$-path until it will stop
 at a non-$\varepsilon$ vertex. Note, that such a path cannot revisit already visited vertices. On the other hand, the sum of the numbers of all $\varepsilon$-vertices on all maximal $\varepsilon$-paths does not exceed the product of the number of all $\varepsilon$-paths 
 and the maximal number of vertices in such a path. According to Lemma 4 the number of $\varepsilon$-vertices in a $\varepsilon$-path is at most $p-1$.
 Note that Lemma 3 implies that the number of all $\varepsilon$-vertices is at least $\vert V_n\vert-\frac{p^2}{\varepsilon}$, which implies the first assertion.

 The second assertion follows from the fact that each edge is on a maximal $\varepsilon$-path.
 \end{proof}
 
\begin{corollary}
    The maximal degree of a $G_n$ is unbounded. Moreover,
    if $\varepsilon\leq \frac{\pi}{p}$, and the number of vertices $\vert V_n\vert\geq \frac{p^2}{\varepsilon}$, then there exists
    a 
    vertex of $G_n$ (possibly on the boundary 
    of $Pol_n$) such that its degree is greater than
    $(\frac{2\varepsilon}{p^3}(\vert V_n\vert-\frac{p^2}{\varepsilon}))^{\frac{1}{p}}$.
    Also, if $\varepsilon\leq \frac{\pi}{p}$, then there
    exists a 
    vertex $A$ of degree $K$ greater than
    $(\frac{2\epsilon\vert E_n\vert}{p^3})^{\frac{1}{p}}$.
\end{corollary}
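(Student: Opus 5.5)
The plan is to count maximal $\varepsilon$-paths by their endpoints. By Corollary 2, the number of maximal $\varepsilon$-paths exceeds $\frac{\vert V_n\vert}{p}-\frac{p}{\varepsilon}$ and is at least $\frac{\vert E_n\vert}{p}$. By the definition of a maximal $\varepsilon$-path, both endpoints are non-$\varepsilon$-vertices, and by Lemma 3 there are fewer than $\frac{p^2}{\varepsilon}$ of these (boundary vertices included). So it suffices to bound, for a non-$\varepsilon$-vertex $A$ of degree $K$, the number of maximal $\varepsilon$-paths that start at $A$, in terms of $K$ and $p$.

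The key step is a branching count. A maximal $\varepsilon$-path starting at $A$ is determined by its sequence of edges. It begins with one of the $K$ edges at $A$. At each subsequent $\varepsilon$-vertex, the continuation must make an angle greater than $\pi-\varepsilon$ with the incoming edge. By Observation 2, the admissible continuations are the edges lying in the small-angle cluster opposite to the incoming edge, so there are at most $\deg$ of that vertex many choices, hence at most $D$, the maximal degree in $G_n$. By Lemma 4 the path has fewer than $p$ edges. Therefore the number of maximal $\varepsilon$-paths starting at a given vertex is at most $D^{p}$ (more carefully, at most $\sum_{j<p}D^{j}$, which is absorbed into $D^p$ up to constants). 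Summing over the fewer than $\frac{p^2}{\varepsilon}$ possible starting vertices, and dividing by $2$ only if we want to count unordered paths, gives
\[
\#\{\text{maximal }\varepsilon\text{-paths}\}\le \frac{p^2}{\varepsilon}D^{p}.
\]
Comparing this with the lower bound $\frac{\vert E_n\vert}{p}$ gives $D^{p}\ge \frac{\varepsilon\vert E_n\vert}{p^3}$. With the constant bookkeeping of the statement, using the factor $2$ from counting each path from both ends, this becomes $D>\bigl(\frac{2\varepsilon\vert E_n\vert}{p^3}\bigr)^{1/p}$. Using instead the bound $\frac{\vert V_n\vert}{p}-\frac{p}{\varepsilon}=\frac1p(\vert V_n\vert-\frac{p^2}{\varepsilon})$ gives the first estimate, which is meaningful when $\vert V_n\vert\ge\frac{p^2}{\varepsilon}$.

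Unboundedness of the maximal degree follows because $\vert E_n\vert\ge \vert V_n\vert-1\to\infty$ as $N_n\to\infty$, since each face contributes vertices.

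The main obstacle is justifying the branching factor cleanly. Under $\varepsilon\le\pi/p$, the continuation at an $\varepsilon$-vertex need not be unique, since several nearly parallel edges can qualify. The bound "at most $\deg v\le D$ choices per step" is crude but sufficient. One should also check that the count of paths of length below $p$ by edge sequences works with $D^{p}$ rather than $D^{p-1}$, and that the factor $2$ and powers of $p$ are arranged to match the stated constant. I would state the per-step bound as "$\le D$" and use Lemma 4 to cap the number of steps at $p-1$, which leaves room for the factor $2$.
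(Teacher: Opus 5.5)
Your proposal is correct and follows the paper's proof. The paper likewise bounds the number of maximal $\varepsilon$-paths from above by $\frac12 v_n M_n^{l_n}<\frac12\frac{p^2}{\varepsilon}M_n^{p}$: the endpoints are non-$\varepsilon$-vertices, there are at most $M_n$ choices per step, fewer than $p$ steps, and each path is counted from both ends. It then compares this with the two lower bounds of Corollary 3, which you cite as Corollary 2.

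Your worry about $\sum_{j<p}D^j$ is unnecessary. Inner vertices of a maximal path are $\varepsilon$-vertices while its endpoint is not, so the maximal paths from a fixed start form a prefix-free family. Hence there are at most $D^{p-1}\le D^{p}$ of them, with no constants lost.
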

\begin{proof}
Denote the maximal degree of a 
vertex of $G_n$ by $M_n$, the maximal length of
a maximal $\varepsilon$-path by $l_n$, and the number of all non-$\varepsilon$ vertices by $v_n$. 
The number of all maximal $\varepsilon$-paths does not exceed $\frac{1}{2}v_nM_n^{l_n}$.
Since $l_n<p$ (Lemma 4), and $v_n<\frac{p^2}{\varepsilon}$ (Lemma 3), the number
of all maximal $\varepsilon$-paths is at most $\frac{1}{2}\left(\frac{p^2}{\varepsilon}\right) M_n^p$. Juxtaposing
this with the two lower bounds of the number of all maximal $\varepsilon$-paths in Corollary 3, we
obtain both assertions of Corollary 4. 
\end{proof}

\begin{corollary} If $\varepsilon\leq {\pi\over p}$, then there exist
at least ${\varepsilon K\over p^2}$ $\varepsilon$-paths between some 
vertex $A$ of degree $K\geq$ 
 $(\frac{2\epsilon\vert E_n\vert}{p^3})^{\frac{1}{p}}$ (as in Corollary 4) and another 
 vertex $B$ (which is a non-$\varepsilon$ vertex). The initial edges of these paths that are incident to $A$ are all distinct.
\end{corollary}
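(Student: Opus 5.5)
Take $A$ to be the vertex of degree $K\ge (\frac{2\varepsilon|E_n|}{p^3})^{1/p}$ supplied by Corollary 4, and use a pigeonhole count on the endpoints of $\varepsilon$-paths that start at $A$.

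\emph{Step 1: one path per edge at $A$.} For each of the $K$ edges $e$ incident to $A$, start at $A$, walk along $e$, and keep extending as an $\varepsilon$-path, exactly as in the proof of Corollary 3. At each $\varepsilon$-vertex the continuation is forced: the two large angles there, together with Observation~2, give an outgoing edge making angle $>\pi-\varepsilon$ with the incoming one. The extension cannot revisit a vertex, and by Lemma 4 it has length $<p$. Hence it terminates at a non-$\varepsilon$ vertex $B_e$. If the neighbour of $A$ along $e$ is already a non-$\varepsilon$ vertex, the path is just the edge $e$ and $B_e$ is that neighbour. This gives $K$ $\varepsilon$-paths from $A$ with pairwise distinct initial edges.

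\emph{Step 2: pigeonhole on endpoints.} Every endpoint $B_e$ is a non-$\varepsilon$ vertex. By Lemma 3 there are fewer than $\frac{p^2}{2\varepsilon}+p<\frac{p^2}{\varepsilon}$ of them. So some non-$\varepsilon$ vertex $B$ is the endpoint of at least $K\big/\frac{p^2}{\varepsilon}=\frac{\varepsilon K}{p^2}$ of these paths. Those paths connect $A$ and $B$ and have distinct initial edges at $A$, which is the claim.

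\emph{Main obstacle.} The delicate point is Step 1: one must check that starting from an arbitrary edge at $A$, possibly with $A$ itself a non-$\varepsilon$ vertex, the forced continuation really exists and satisfies the definition of an $\varepsilon$-path. The definition only constrains inner vertices, and each inner vertex is an $\varepsilon$-vertex by construction. Two further details need care. The case $B=A$ (a path returning to $A$) must be excluded or handled; since the paths are nearly straight and shorter than $p$, a return would force a total turning of about $2\pi$, contradicting the small turning angles when $\varepsilon\le\pi/p$. Boundary vertices must also be counted among the non-$\varepsilon$ vertices, which the bound of Lemma 3 already includes.
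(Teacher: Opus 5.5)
Your proposal is correct and follows essentially the same argument as the paper: you send one $\varepsilon$-path out along each of the $K$ edges at $A$, extend it through $\varepsilon$-vertices and stop it at the first non-$\varepsilon$ vertex, then pigeonhole over the fewer than $\frac{p^2}{\varepsilon}$ non-$\varepsilon$ vertices given by Lemma 3. Your extra checks are details the paper leaves implicit: that no path can return to $A$ (all its edge directions lie in an arc of length $<\pi$), and that boundary vertices are already included in the count of Lemma 3.
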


\begin{proof}
Take a 
vertex $A$ satisfying the lower bound on its degree in terms of $|E_n|$ as in Corollary 4. Start along each of the $K$ edges incident to $A$. When a path reaches a 
$\varepsilon$-vertex $v$, we extend it by an edge $e$, so that the path
remains a $\varepsilon$-path. When a path hits a non-$\varepsilon$ vertex, we stop it. The lower bound in the Corollary is the ratio of $K$ to the upper bound for the number of non-$\varepsilon$ vertices in Lemma 3.
\end{proof}

In the proof of the above corollary, we can change the construction
as follows. Consider the paths from $A$ to $B$ and reexamine their construction. We can assume that the paths were constructed one by one. Now we assume that if a constructed path $\tau$ reaches a $\varepsilon$-vertex $v$ on a previously constructed path $\sigma$,
we just stop $\tau$ and say that $\sigma$ and $\tau$ merge at $v$.

Assume that at most $T=({\varepsilon K\over p^2})^{1\over p}$ paths merge at any $\varepsilon$-vertex $v$. This means that after
traveling the first segment, we are still left with ${{\varepsilon K\over p^2}\over {\varepsilon K\over p^2}^{1\over p}}={\varepsilon K\over p^2}^{p-1\over p}$ paths, after traveling two segments,
we are going to have at least ${\varepsilon K\over p^2}^{p-2\over p}$ paths, etc. As the length of maximal $\varepsilon$-paths is at most $p-1$, in the end we will still be left with at least $({\varepsilon K\over p^2})^{1\over p}$ vertex-disjoint paths meeting at $B$.
Recall that $K\geq ({2\varepsilon\vert E_n\vert\over p^3})^{1\over p}$.
We arrive to the following lemma:

\begin{lemma} Assume that $\varepsilon\leq{\pi\over p}$.
Let $R$ denote $\varepsilon^{{1\over p}+{1\over p^2}}\vert E_n\vert^{1\over p^2}{2^{1\over p^2}}p^{-{2p+3\over p^2}}$. Let $A$ be a 
vertex of degree $K\geq ({2\varepsilon|E_n|\over p^3})^{1\over p}$, as in Corollary 5. There exists a vertex $C$ connected with $A$ by more than $R$ vertex-disjoint $\varepsilon$-paths of length less than $p$ (Figure~\ref{fig:eps-paths}).
\end{lemma}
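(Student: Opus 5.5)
The plan is to make rigorous the merging argument sketched just before the statement. The first step is to fix $A$ as in Corollary 5 and launch one $\varepsilon$-path along each of the $K$ edges incident to $A$. Each path is extended greedily through $\varepsilon$-vertices until it hits a non-$\varepsilon$ vertex. By Lemma 4, every such path has fewer than $p$ edges. By Lemma 3, there are fewer than $p^2/\varepsilon$ non-$\varepsilon$ vertices. Pigeonholing, at least $M_0:=\varepsilon K/p^2$ of these paths terminate at one common non-$\varepsilon$ vertex. I would take this vertex as $C$ and discard the remaining paths. (Their initial edges are distinct, so distinct paths start differently.)

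Next, I would make the paths vertex-disjoint away from $A$ and $C$ by processing them level by level. Let $S_0$ be the set of surviving paths. I would organize the union of these paths as a rooted structure: build the paths one at a time, and whenever a new path reaches an $\varepsilon$-vertex already lying on a previously built path, stop it and declare a merge. This yields a tree rooted at $C$, with leaves corresponding to edges at $A$. Set $T=M_0^{1/p}$ and split into two cases. In the first case, some vertex has at least $T$ branches merging into it. In the second, every merge vertex has fewer than $T$ incoming branches. In the second case, the number of distinct branches can shrink by a factor of less than $T$ per step along the paths. Since all paths have fewer than $p$ edges, after at most $p-1$ steps at least $M_0/T^{p-1}=M_0^{1/p}$ branches reach $C$ through pairwise distinct vertices. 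These branches are vertex-disjoint $\varepsilon$-paths from $A$ to $C$. In the first case I would instead replace $C$ by the high-merge vertex $v$ and restrict to the initial segments from $A$ to $v$. These segments are themselves $\varepsilon$-paths of length less than $p$, and then I recurse on depth, which strictly decreases. Either way, the recursion produces a vertex connected to $A$ by at least $M_0^{1/p}$ vertex-disjoint $\varepsilon$-paths.

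The final step is to plug in $K\ge(2\varepsilon|E_n|/p^3)^{1/p}$, which gives
\[
M_0^{1/p}\ge\Bigl(\tfrac{\varepsilon}{p^2}\Bigr)^{1/p}\Bigl(\tfrac{2\varepsilon|E_n|}{p^3}\Bigr)^{1/p^2}
=\varepsilon^{\frac1p+\frac1{p^2}}|E_n|^{\frac1{p^2}}2^{\frac1{p^2}}p^{-\frac{2p+3}{p^2}}=R,
\]
which is exactly the $R$ of the statement. The strict inequality ``more than $R$'' would come from the strict inequalities in Lemma 3 and Lemma 4, or from rounding up.

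I expect the main obstacle to be the disjointness bookkeeping. ``Vertex-disjoint'' has to hold genuinely, and merges can happen at different depths on different paths. I would handle this by indexing along each path by distance to $C$ (or by a potential combining depth and branch count) and showing that a path-counting argument on the tree gives, at each vertex, either at least $T$ children somewhere or a product bound along the paths. The edge case in which a path passes through $C$ or $A$ in its interior would be excluded, since $C$ is a non-$\varepsilon$ vertex and $A$ is an endpoint.
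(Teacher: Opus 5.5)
Apart from your first case, your outline matches the paper's argument: pigeonhole over the fewer than $p^2/\varepsilon$ non-$\varepsilon$ vertices (Lemma 3), rebuild the paths one at a time stopping each at its first contact with an earlier one, take $T=M_0^{1/p}$, lose at most a factor $T$ per step over fewer than $p$ steps, and check that $M_0^{1/p}=R$. That algebra is correct.

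The first case does not work as written. There you pass to the $\ge T$ segments ending at the high-merge vertex $v$ and ``recurse on depth'', and you assert that the recursion still returns $M_0^{1/p}$ disjoint paths. A recursion does not give this. Re-running the dichotomy on a family of $M_1\ge T$ paths with a recomputed threshold only guarantees about $M_1^{1/p}\ge M_0^{1/p^2}$ disjoint paths. Keeping the old threshold $T$ gives nothing, since $M_1/T^{p-1}$ can be below $1$. Each further occurrence of case~1 costs another $p$-th root.

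The missing observation is that no recursion is needed. Each path is cut at its \emph{first} vertex lying on a previously built path. Therefore the initial segments from $A$ to $v$ of all branches arriving at $v$ are already pairwise vertex-disjoint apart from $A$ and $v$. These branches are the first-built path through $v$ together with every path stopped at $v$. Each segment is a prefix of one of your original paths, hence an $\varepsilon$-path of length $<p$; an $\varepsilon$-path imposes no condition at its endpoints.

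So in case~1 you simply take $C=v$ and get at least $T=R$ paths directly. This is why the lemma only asserts the existence of \emph{some} vertex $C$. It is also the case the paper leaves implicit when it writes ``Assume that at most $T$ paths merge at any $\varepsilon$-vertex.''

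For the strict inequality, use that the pigeonhole actually gives more than $\varepsilon K/p^2$ paths. Set $T=(\varepsilon K/p^2)^{1/p}$ and let case~1 be ``more than $T$ branches arrive at some vertex''. Then case~2 yields more than $T$ surviving paths as well.
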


\begin{figure}[ht]
\centering
\resizebox{\ifdim\width>\textwidth \textwidth\else\width\fi}{!}{%
\begin{tikzpicture}[
   >={Stealth[length=2.2mm]},
   pth/.style={line width=0.9pt, blue!70!black},
   opth/.style={line width=1.2pt, blue!45!black},
   chord/.style={line width=0.9pt, dashed, black},
   edot/.style={circle, fill=black, inner sep=1.7pt},
   scale=1.0]
\draw[opth] (0.000,0.000) -- (0.550,0.225) -- (1.100,0.177) -- (1.650,0.417) -- (2.200,0.265) -- (2.750,0.435) -- (3.300,0.265) -- (3.850,0.435) -- (4.400,0.265) -- (4.950,0.435) -- (5.500,0.265) -- (6.050,0.435) -- (6.600,0.265) -- (7.150,0.417) -- (7.700,0.177) -- (8.250,0.225) -- (8.800,0.000);
\draw[pth] (0.000,0.000) -- (0.550,0.365) -- (1.100,0.440) -- (1.650,0.750) -- (2.200,0.615) -- (2.750,0.785) -- (3.300,0.615) -- (3.850,0.785) -- (4.400,0.615) -- (4.950,0.785) -- (5.500,0.615) -- (6.050,0.785) -- (6.600,0.615) -- (7.150,0.750) -- (7.700,0.440) -- (8.250,0.365) -- (8.800,0.000);
\draw[pth] (0.000,0.000) -- (0.550,0.505) -- (1.100,0.703) -- (1.650,1.083) -- (2.200,0.965) -- (2.750,1.135) -- (3.300,0.965) -- (3.850,1.135) -- (4.400,0.965) -- (4.950,1.135) -- (5.500,0.965) -- (6.050,1.135) -- (6.600,0.965) -- (7.150,1.083) -- (7.700,0.703) -- (8.250,0.505) -- (8.800,0.000);
\draw[pth] (0.000,0.000) -- (0.550,0.645) -- (1.100,0.965) -- (1.650,1.415) -- (2.200,1.315) -- (2.750,1.485) -- (3.300,1.315) -- (3.850,1.485) -- (4.400,1.315) -- (4.950,1.485) -- (5.500,1.315) -- (6.050,1.485) -- (6.600,1.315) -- (7.150,1.415) -- (7.700,0.965) -- (8.250,0.645) -- (8.800,0.000);
\draw[opth] (0.000,0.000) -- (0.550,0.785) -- (1.100,1.228) -- (1.650,1.747) -- (2.200,1.665) -- (2.750,1.835) -- (3.300,1.665) -- (3.850,1.835) -- (4.400,1.665) -- (4.950,1.835) -- (5.500,1.665) -- (6.050,1.835) -- (6.600,1.665) -- (7.150,1.747) -- (7.700,1.228) -- (8.250,0.785) -- (8.800,0.000);
\fill[blue!70!black] (0.550,0.225) circle (0.9pt);
\fill[blue!70!black] (1.100,0.177) circle (0.9pt);
\fill[blue!70!black] (1.650,0.417) circle (0.9pt);
\fill[blue!70!black] (2.200,0.265) circle (0.9pt);
\fill[blue!70!black] (2.750,0.435) circle (0.9pt);
\fill[blue!70!black] (3.300,0.265) circle (0.9pt);
\fill[blue!70!black] (3.850,0.435) circle (0.9pt);
\fill[blue!70!black] (4.400,0.265) circle (0.9pt);
\fill[blue!70!black] (4.950,0.435) circle (0.9pt);
\fill[blue!70!black] (5.500,0.265) circle (0.9pt);
\fill[blue!70!black] (6.050,0.435) circle (0.9pt);
\fill[blue!70!black] (6.600,0.265) circle (0.9pt);
\fill[blue!70!black] (7.150,0.417) circle (0.9pt);
\fill[blue!70!black] (7.700,0.177) circle (0.9pt);
\fill[blue!70!black] (8.250,0.225) circle (0.9pt);
\fill[blue!70!black] (0.550,0.365) circle (0.9pt);
\fill[blue!70!black] (1.100,0.440) circle (0.9pt);
\fill[blue!70!black] (1.650,0.750) circle (0.9pt);
\fill[blue!70!black] (2.200,0.615) circle (0.9pt);
\fill[blue!70!black] (2.750,0.785) circle (0.9pt);
\fill[blue!70!black] (3.300,0.615) circle (0.9pt);
\fill[blue!70!black] (3.850,0.785) circle (0.9pt);
\fill[blue!70!black] (4.400,0.615) circle (0.9pt);
\fill[blue!70!black] (4.950,0.785) circle (0.9pt);
\fill[blue!70!black] (5.500,0.615) circle (0.9pt);
\fill[blue!70!black] (6.050,0.785) circle (0.9pt);
\fill[blue!70!black] (6.600,0.615) circle (0.9pt);
\fill[blue!70!black] (7.150,0.750) circle (0.9pt);
\fill[blue!70!black] (7.700,0.440) circle (0.9pt);
\fill[blue!70!black] (8.250,0.365) circle (0.9pt);
\fill[blue!70!black] (0.550,0.505) circle (0.9pt);
\fill[blue!70!black] (1.100,0.703) circle (0.9pt);
\fill[blue!70!black] (1.650,1.083) circle (0.9pt);
\fill[blue!70!black] (2.200,0.965) circle (0.9pt);
\fill[blue!70!black] (2.750,1.135) circle (0.9pt);
\fill[blue!70!black] (3.300,0.965) circle (0.9pt);
\fill[blue!70!black] (3.850,1.135) circle (0.9pt);
\fill[blue!70!black] (4.400,0.965) circle (0.9pt);
\fill[blue!70!black] (4.950,1.135) circle (0.9pt);
\fill[blue!70!black] (5.500,0.965) circle (0.9pt);
\fill[blue!70!black] (6.050,1.135) circle (0.9pt);
\fill[blue!70!black] (6.600,0.965) circle (0.9pt);
\fill[blue!70!black] (7.150,1.083) circle (0.9pt);
\fill[blue!70!black] (7.700,0.703) circle (0.9pt);
\fill[blue!70!black] (8.250,0.505) circle (0.9pt);
\fill[blue!70!black] (0.550,0.645) circle (0.9pt);
\fill[blue!70!black] (1.100,0.965) circle (0.9pt);
\fill[blue!70!black] (1.650,1.415) circle (0.9pt);
\fill[blue!70!black] (2.200,1.315) circle (0.9pt);
\fill[blue!70!black] (2.750,1.485) circle (0.9pt);
\fill[blue!70!black] (3.300,1.315) circle (0.9pt);
\fill[blue!70!black] (3.850,1.485) circle (0.9pt);
\fill[blue!70!black] (4.400,1.315) circle (0.9pt);
\fill[blue!70!black] (4.950,1.485) circle (0.9pt);
\fill[blue!70!black] (5.500,1.315) circle (0.9pt);
\fill[blue!70!black] (6.050,1.485) circle (0.9pt);
\fill[blue!70!black] (6.600,1.315) circle (0.9pt);
\fill[blue!70!black] (7.150,1.415) circle (0.9pt);
\fill[blue!70!black] (7.700,0.965) circle (0.9pt);
\fill[blue!70!black] (8.250,0.645) circle (0.9pt);
\fill[blue!70!black] (0.550,0.785) circle (0.9pt);
\fill[blue!70!black] (1.100,1.228) circle (0.9pt);
\fill[blue!70!black] (1.650,1.747) circle (0.9pt);
\fill[blue!70!black] (2.200,1.665) circle (0.9pt);
\fill[blue!70!black] (2.750,1.835) circle (0.9pt);
\fill[blue!70!black] (3.300,1.665) circle (0.9pt);
\fill[blue!70!black] (3.850,1.835) circle (0.9pt);
\fill[blue!70!black] (4.400,1.665) circle (0.9pt);
\fill[blue!70!black] (4.950,1.835) circle (0.9pt);
\fill[blue!70!black] (5.500,1.665) circle (0.9pt);
\fill[blue!70!black] (6.050,1.835) circle (0.9pt);
\fill[blue!70!black] (6.600,1.665) circle (0.9pt);
\fill[blue!70!black] (7.150,1.747) circle (0.9pt);
\fill[blue!70!black] (7.700,1.228) circle (0.9pt);
\fill[blue!70!black] (8.250,0.785) circle (0.9pt);
\draw[chord] (0,0) -- (8.800,0);
\node[edot] at (0,0) {};
\node[edot] at (8.800,0) {};
\node[left]  at (0,0) {$A$};
\node[right] at (8.800,0) {$B$};
\node[blue!45!black,font=\footnotesize] at (4.400,2.050) {$\gamma_n$};
\node[blue!45!black,font=\footnotesize] at (6.336,0.175) {$\gamma_1$};
\node[below,font=\footnotesize] at (2.904,-0.10) {$|AB|$};
\end{tikzpicture}
}
\caption{Many vertex-disjoint $\varepsilon$-paths from $A$ to $B$, with the partition cells between two consecutive paths; the paths are unit-edge broken lines (not necessarily convex) and only the vertical scale is exaggerated.}
\label{fig:eps-paths}
\end{figure}

{\bf Remark 1.} Note that $R>{1\over 4}\varepsilon^{p+1\over p^2}\vert E_n\vert^{1\over p^2}$. If we choose $\varepsilon={1\over p}$, then $\varepsilon^{{1\over p}+{1\over p^2}}=p^{-{p+1\over p^2}}\geq 3^{-{4\over 9}}>0.61$ for all $p\geq 3$. (Here $p\geq 3$ always holds, as any polygon with integer side lengths has at least three sides, and, therefore, perimeter at least $3$.) Thus, $R>0.15|E_n|^{1\over p^2}$.

{\bf Remark 2.} Assume that instead of flat polygons $Pol_n$ subdivided into convex polygons with integer sides, we have polyhedral surfaces $S_n$ homeomorphic to the disk that were obtained as dual complexes
of geodesic nets in convex discs in $M$
as explained in sections 2.2 and 3.3.
An analogue of Lemma 1 for considered polyhedral surfaces was stated as Proposition 1.

Assume that the total absolute curvature $\mu_n$ of $S_n$ does not exceed ${1\over 4}$. Then the conclusion of Lemma 2 will held for $S_n$. 
The Fiala-Huber isoperimetric inequality generalized for polyhedral inequalities by Yu. Burago and V. Zalgaller ([BZ]) asserts
that the area $A_n$ of $S_n$ does not exceed ${p_n^2\over 4(\pi-{\mu_n\over 2})}$, which is still stronger than the inequality $A\leq {p^2\over 12}$ used in the proof of Lemma 2 as long as ${\mu_n\over 2}\leq 0.14$.
Therefore, Lemma 2 also holds for considered
surfaces. 

For Lemma 3 we assume that $\mu_n\leq {1\over 10}$. Moreover, we strengthen the assumption about $\varepsilon$.
Instead of assuming that $\varepsilon\leq {\pi\over 6}$,
we are going to assume that $\epsilon\leq 1/3$. As the
result ${\sin \varepsilon\over \varepsilon}\geq 0.98$, and the Fiala-Huber inequality implies that the area $A_n\geq {p^2\over 12.36}$. Now the same proof as the proof of Lemma 3 yields the same inequality. In application in the next sections we will be always assuming that $\varepsilon\leq {1\over p}\leq {1\over 3}$.

In Lemma 4 the angles between $e_j$ and $b_k$ are not determined anymore only by angles between consecutive edges $e_i$. The Gauss-Bonnet theorem applied to the polygon formed by edges $e_1,\ldots , e_k,b_k$ implies
that, in principle, these angles can become larger by a summand that does not exceed $\mu$. Now the lower
bound for $\Vert b_k\Vert$, $\Sigma_{j=1}^k \cos({j\pi\over p}+\mu)=
{\sin{k\pi\over 2p}\over \sin{\pi\over 2p}}\cos \mu$.
As $\mu\leq 0.25$, if $N\ge {2\over 3}p$, then $\Vert b_k\Vert\geq {\sin {\pi\over 3}\over {\pi\over 2p}}\cos 0.25>0.5p$, and again $N<{2\over 3}p<p$.

The proofs of all other results
in this section will be true as they are without any additional changes (besides the stronger assumption that $\varepsilon\leq {1\over 3}$).

Thus, all the results in this section will be true for polyhedral surfaces with total absolute curvature $\leq {1\over 10}$ radian that are homeomorphic to a disk, and $\varepsilon\leq {1\over 3}$ radian, provided that the polyhedral surface was constructed as the dual
complex of a geodesic net in a small convex ball in a closed Riemannian surface as described in section 3.3.



\section{ Proof of Theorem 1, and some useful lemmata for the proof of Theorem 2}
\subsection{Proof of Theorem 1 modulo lemmata 6 and 7}
The setting for these lemmata is two vertices $A$ and $B$ connected by a large number of vertex-disjoint consecutive
$\varepsilon$-paths $\gamma_1,\ldots, \gamma_{\lceil R\rceil}$ for a small $\varepsilon$ as in the previous lemma.
These paths have the same integer length $l$ greater than $|AB|$. The central idea is to use the parameter $c=l-|AB|$ that potentially can be uncontrollably close to $0$, and to prove that: (1) The distances
from all vertices of all paths to the line $(AB)$ do not exceed
$f(p)\sqrt{c}$ for $f(p)=\sqrt{2p}$ (Lemma 7 below);
(2) For each path $\gamma_i$
from $A$ to $B$ the sum $\Sigma_j|\pi-\alpha_j|$ of absolute values of its outer angles is at least $g(p)\sqrt{c}$, where $g(p)={1\over\sqrt{p}}$ (Lemma 6 and Corollary 6 below).

Note that if we consider paths $\gamma_{i-1},\gamma_{i+1}$ and $\gamma_i$ between them,
for each inner vertex $q$ of $\gamma_i$ the
triangle formed by halves of the edges
adjacent to $q$ is contained in the domain between $\gamma_{i-1}$ and $\gamma_{i+1}$
and has area ${1\over 8}\sin (|\pi-\alpha_j|)\geq {1\over 16}|\pi-\alpha_j|$,
where $\alpha_j$ denotes the angle at $q$. These triangles are disjoint. Applying the mentioned Corollary 6, we see that the sum of their
areas is at least ${1\over 16}{\sqrt{c}\over\sqrt{p}}$.
Therefore, we see that the area of the domain between $\gamma_{i-1}$ and $\gamma_{i+1}$
is at least ${1\over 16}{\sqrt{c}\over\sqrt{p}}$. As the result,
the area of the polygon between $\gamma_1$ and
$\gamma_{\left\lceil R\right\rceil}$ is at least
${1\over 16}\left\lfloor{\left\lceil R\right\rceil\over 2}\right\rfloor{\sqrt{c}\over \sqrt{p}}$.
On the other hand, 
this polygon is contained in the rectangle with one side parallel to $(AB)$ of length $|AB|<l<p$ and the other side $\leq 2f(p)\sqrt{c}=2\sqrt{2}\sqrt{p}\sqrt{c}$ (Figure~\ref{fig:area-argument}).
Combining these two estimates, we see that $R\leq 16+64{f(p)\over g(p)}p=64\sqrt{2}p^2+16<91p^2+16$. Combining this inequality with the inequality $R>0.15|E_n|^{1\over p^2}$ established in Remark 1 at the end of the previous section (for the choice $\epsilon={1\over p}$), we see that
$$\vert V_n\vert\le\vert E_n\vert<\left({91p^2+16\over 0.15}\right)^{p^2}<(607p^2+107)^{p^2}\leq (625p^2)^{p^2}=(25p)^{2p^2},$$
where the second to last inequality holds since $p\geq 3$, and, therefore, $107\leq 18p^2$. This completes the proof of Theorem 3 and, therefore, of Theorem 1 and Corollary 1.

\begin{figure}[ht]
\centering
\resizebox{\ifdim\width>\textwidth \textwidth\else\width\fi}{!}{%
\begin{tikzpicture}[
   >={Stealth[length=2.2mm]},
   pth/.style={line width=1.0pt, blue!70!black},
   nb/.style={line width=0.9pt, blue!35!black, opacity=0.7},
   chord/.style={line width=0.9pt, dashed, black},
   tri/.style={fill=red!25, draw=red!70!black, line width=0.6pt},
   rect/.style={line width=0.8pt, dashed, red!70!black},
   edot/.style={circle, fill=black, inner sep=1.7pt},
   dot/.style={circle, fill=blue!70!black, inner sep=1.2pt},
   scale=1.0]
\begin{scope}
\draw[nb] (0.000,0.000) -- (0.620,0.440) -- (1.240,0.078) -- (1.860,0.688) -- (2.480,0.190) -- (3.100,0.710) -- (3.720,0.190) -- (4.340,0.688) -- (4.960,0.078) -- (5.580,0.440) -- (6.200,0.000);
\draw[nb] (0.000,0.000) -- (0.620,0.840) -- (1.240,0.827) -- (1.860,1.637) -- (2.480,1.190) -- (3.100,1.710) -- (3.720,1.190) -- (4.340,1.637) -- (4.960,0.827) -- (5.580,0.840) -- (6.200,0.000);
\draw[pth] (0.000,0.000) -- (0.620,0.640) -- (1.240,0.452) -- (1.860,1.163) -- (2.480,0.690) -- (3.100,1.210) -- (3.720,0.690) -- (4.340,1.163) -- (4.960,0.452) -- (5.580,0.640) -- (6.200,0.000);
\fill[tri] (1.860,1.163) -- (1.550,0.808) -- (2.170,0.926) -- cycle;
\node[dot] at (1.860,1.163) {};
\fill[tri] (2.480,0.690) -- (2.170,0.926) -- (2.790,0.950) -- cycle;
\node[dot] at (2.480,0.690) {};
\fill[tri] (3.100,1.210) -- (2.790,0.950) -- (3.410,0.950) -- cycle;
\node[dot] at (3.100,1.210) {};
\fill[tri] (3.720,0.690) -- (3.410,0.950) -- (4.030,0.926) -- cycle;
\node[dot] at (3.720,0.690) {};
\fill[tri] (4.340,1.163) -- (4.030,0.926) -- (4.650,0.808) -- cycle;
\node[dot] at (4.340,1.163) {};
\node[blue!70!black,font=\footnotesize,left] at (0,0.380) {$\gamma_i$};
\node[blue!35!black,font=\footnotesize,left] at (0,0.880) {$\gamma_{i+1}$};
\node[blue!35!black,font=\footnotesize,left] at (0,-0.120) {$\gamma_{i-1}$};
\node[font=\footnotesize] at (3.100,-0.75) {(a)};
\end{scope}
\begin{scope}[shift={(8.400,0)}]
\draw[rect] (-0.18,-0.30) rectangle (7.740,1.380);
\draw[pth,opacity=0.85] (0.000,0.000) -- (0.420,0.127) -- (0.840,0.080) -- (1.260,0.226) -- (1.680,0.125) -- (2.100,0.235) -- (2.520,0.125) -- (2.940,0.235) -- (3.360,0.125) -- (3.780,0.235) -- (4.200,0.125) -- (4.620,0.235) -- (5.040,0.125) -- (5.460,0.235) -- (5.880,0.125) -- (6.300,0.226) -- (6.720,0.080) -- (7.140,0.127) -- (7.560,0.000);
\draw[pth,opacity=0.85] (0.000,0.000) -- (0.420,0.199) -- (0.840,0.215) -- (1.260,0.397) -- (1.680,0.305) -- (2.100,0.415) -- (2.520,0.305) -- (2.940,0.415) -- (3.360,0.305) -- (3.780,0.415) -- (4.200,0.305) -- (4.620,0.415) -- (5.040,0.305) -- (5.460,0.415) -- (5.880,0.305) -- (6.300,0.397) -- (6.720,0.215) -- (7.140,0.199) -- (7.560,0.000);
\draw[pth,opacity=0.85] (0.000,0.000) -- (0.420,0.271) -- (0.840,0.350) -- (1.260,0.568) -- (1.680,0.485) -- (2.100,0.595) -- (2.520,0.485) -- (2.940,0.595) -- (3.360,0.485) -- (3.780,0.595) -- (4.200,0.485) -- (4.620,0.595) -- (5.040,0.485) -- (5.460,0.595) -- (5.880,0.485) -- (6.300,0.568) -- (6.720,0.350) -- (7.140,0.271) -- (7.560,0.000);
\draw[pth,opacity=0.85] (0.000,0.000) -- (0.420,0.343) -- (0.840,0.485) -- (1.260,0.739) -- (1.680,0.665) -- (2.100,0.775) -- (2.520,0.665) -- (2.940,0.775) -- (3.360,0.665) -- (3.780,0.775) -- (4.200,0.665) -- (4.620,0.775) -- (5.040,0.665) -- (5.460,0.775) -- (5.880,0.665) -- (6.300,0.739) -- (6.720,0.485) -- (7.140,0.343) -- (7.560,0.000);
\draw[pth,opacity=0.85] (0.000,0.000) -- (0.420,0.415) -- (0.840,0.620) -- (1.260,0.910) -- (1.680,0.845) -- (2.100,0.955) -- (2.520,0.845) -- (2.940,0.955) -- (3.360,0.845) -- (3.780,0.955) -- (4.200,0.845) -- (4.620,0.955) -- (5.040,0.845) -- (5.460,0.955) -- (5.880,0.845) -- (6.300,0.910) -- (6.720,0.620) -- (7.140,0.415) -- (7.560,0.000);
\draw[pth,opacity=0.85] (0.000,0.000) -- (0.420,0.487) -- (0.840,0.755) -- (1.260,1.081) -- (1.680,1.025) -- (2.100,1.135) -- (2.520,1.025) -- (2.940,1.135) -- (3.360,1.025) -- (3.780,1.135) -- (4.200,1.025) -- (4.620,1.135) -- (5.040,1.025) -- (5.460,1.135) -- (5.880,1.025) -- (6.300,1.081) -- (6.720,0.755) -- (7.140,0.487) -- (7.560,0.000);
\draw[chord] (0,0) -- (7.560,0);
\node[edot] at (0,0) {};
\node[edot] at (7.560,0) {};
\node[left] at (0,0) {$A$};
\node[right] at (7.560,0) {$B$};
\draw[<->, red!70!black, line width=0.7pt] (8.060,-0.30) -- (8.060,1.380);
\node[red!70!black,font=\footnotesize,right] at (8.110,0.540) {$2\sqrt{2pc}$};
\draw[<->, red!70!black, line width=0.7pt] (-0.18,-0.72) -- (7.740,-0.72);
\node[red!70!black,font=\footnotesize,below] at (3.780,-0.70) {$|AB|<l\le p$};
\node[font=\footnotesize] at (3.780,-1.25) {(b)};
\end{scope}
\end{tikzpicture}
}
\caption{The two area estimates for the region between consecutive $\varepsilon$-paths: (a) the disjoint triangles at the inner vertices of $\gamma_i$, meeting at edge midpoints (turning exaggerated); (b) the whole family confined to a thin rectangle.}
\label{fig:area-argument}
\end{figure}

\subsection{Lemma 6 and its generalization
for polyhedral surfaces}

\begin{lemma} Consider a broken line in
the plane connecting points $A$ and $B$. Assume that this broken line is a graph of a function on $[AB]$. Denote the number of segments by $N$, their lengths by $s_1,\ldots, s_N$, the length of the curve
(equal to $\Sigma_{i=1}^Ns_i$) by $l$, and $l-|AB|$ by $c$. Further, denote the angles
between the $i$th and the $(i+1)$st segments by $\alpha_i$, $i=1,\ldots, N-1$.
Then there exists $i$ such that $\alpha_i$
satisfies the inequality $\vert\alpha_i-\pi\vert>{1\over N-1}{\sqrt {2c\over l}}$. Denote the outer angles $\pi-\alpha_i$ at $N-1$ inner vertices of 
the broken line by $\theta_i$, $i=1, \ldots , N-1$. (Note that $\theta_i$ can be negative, if $\alpha_i>\pi$).
Then $\Sigma_{i=1}^{N-1}\vert \theta_i\vert\geq \sqrt{2c\over l}$ (see Figure~\ref{fig:lemma6}).

\end{lemma}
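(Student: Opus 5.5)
Parametrize directions by angles relative to the line $(AB)$. Let $\phi_i\in(-\pi/2,\pi/2)$ be the signed angle between the $i$th segment and the vector $\overrightarrow{AB}$; it lies in this range because the broken line is a graph over $[AB]$. Then $|AB|=\sum_i s_i\cos\phi_i$, and hence
$$c=l-|AB|=\sum_{i=1}^N s_i(1-\cos\phi_i)=\sum_{i=1}^N 2s_i\sin^2(\phi_i/2)\le \frac12\sum_{i=1}^N s_i\phi_i^2 .$$
The turning angles are differences of consecutive directions, $\theta_i=\pm(\phi_{i+1}-\phi_i)$, with a sign fixed by orientation that does not matter for absolute values. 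Set $\Theta=\sum_{i=1}^{N-1}|\theta_i|$. Then for all $i,j$ we have $|\phi_i-\phi_j|\le\Theta$.

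The key step uses the constraint that the vertical displacements cancel: $\sum_i s_i\sin\phi_i=0$, since $A$ and $B$ both lie on the line. It follows that the $\phi_i$ cannot all have the same strict sign. So some $\phi_j\ge0$ and some $\phi_k\le0$, which forces $|\phi_i|\le\max(|\phi_i-\phi_j|,|\phi_i-\phi_k|)\le\Theta$ for every $i$. (If $\phi_i\ge0$, compare with $\phi_k\le0$; otherwise compare with $\phi_j$.) Substituting into the bound above gives
$$c\le \frac12\sum_i s_i\Theta^2=\frac{l\,\Theta^2}{2},$$
so $\Theta\ge\sqrt{2c/l}$, which is the second assertion. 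The first assertion follows by pigeonhole: the $N-1$ quantities $|\theta_i|$ sum to at least $\sqrt{2c/l}$, so some $|\theta_i|=|\alpha_i-\pi|\ge\frac1{N-1}\sqrt{2c/l}$.

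The main obstacle is that the statement asks for strict inequality in the first claim. To get it, I would show that equality is impossible when $c>0$: $1-\cos x<x^2/2$ strictly for $x\ne0$, and if all $\phi_i=0$ then $c=0$. In the degenerate case $c=0$ the claimed strict inequality $|\alpha_i-\pi|>0$ fails for a straight segment, so I would note that the lemma is intended for $c>0$ (or read the first claim as non-strict). I also need to check the sign-change step carefully: it uses only the graph condition, which guarantees $\cos\phi_i>0$ so that the $\phi_i$ are genuine angles in $(-\pi/2,\pi/2)$ and their differences equal the turning angles without $2\pi$ ambiguity.
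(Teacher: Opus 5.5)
Your proof is correct and is essentially the paper's argument: project onto $(AB)$, write $c=\sum_i 2s_i\sin^2(\phi_i/2)\le\frac12\sum_i s_i\phi_i^2$, and bound every direction angle by $\sum_i|\theta_i|$. The paper simply asserts that last bound. Your sign-change argument via $\sum_i s_i\sin\phi_i=0$ supplies the missing justification, and you are also right that the strict inequality in the first claim needs $c>0$, since it fails for a straight segment.
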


\begin{figure}[ht]
\centering
\resizebox{\ifdim\width>\textwidth \textwidth\else\width\fi}{!}{%
\begin{tikzpicture}[
   >={Stealth[length=2.2mm]},
   bl/.style={line width=1.0pt, blue!70!black},
   chord/.style={line width=0.9pt, dashed, black},
   ext/.style={line width=0.6pt, dotted, gray!70!black},
   dot/.style={circle, fill=blue!70!black, inner sep=1.3pt},
   edot/.style={circle, fill=black, inner sep=1.6pt},
   ang/.style={line width=0.6pt, red!70!black},
   scale=1.15]

\coordinate (A)  at (0,0);
\coordinate (P1) at (1.25,0.52);
\coordinate (P2) at (2.55,0.86);
\coordinate (P3) at (3.95,0.60);
\coordinate (P4) at (5.35,0.92);
\coordinate (P5) at (6.55,0.44);
\coordinate (B)  at (8,0);

\draw[chord] (A)--(B);
\node[below,font=\footnotesize] at (4,-0.06) {$|AB|=l-c$};

\draw[bl] (A)--(P1)--(P2)--(P3)--(P4)--(P5)--(B);

\node[above left,font=\footnotesize]  at ($(A)!0.5!(P1)$)  {$s_1$};
\node[above,font=\footnotesize]        at ($(P1)!0.5!(P2)$) {$s_2$};
\node[below,font=\footnotesize]        at ($(P4)!0.5!(P5)$) {$s_{N-1}$};
\node[above right,font=\footnotesize]  at ($(P5)!0.5!(B)$)  {$s_N$};

\draw[ext] (P2) -- ($(P2)+(14.65:1.25)$);
\draw[ang] (P2) ++(14.65:0.92) arc (14.65:-10.55:0.92);
\node[red!70!black,font=\footnotesize] at ($(P2)+(2:1.45)$) {$\theta_2$};

\draw[ext] (P3) -- ($(P3)+(-10.55:1.2)$);
\draw[ang] (P3) ++(-10.55:0.86) arc (-10.55:12.75:0.86);
\node[red!70!black,font=\footnotesize] at ($(P3)+(1:1.4)$) {$\theta_3$};

\draw[ang] (A) ++(0:0.95) arc (0:22.6:0.95);
\node[red!70!black,font=\footnotesize] at (1.18,0.19) {$\beta$};
\draw[ang] (B) ++(180:0.95) arc (180:159.7:0.95);
\node[red!70!black,font=\footnotesize] at (6.86,0.20) {$\gamma$};

\node[edot] at (A) {};
\node[edot] at (B) {};
\foreach \p in {(P1),(P2),(P3),(P4),(P5)} \node[dot] at \p {};
\node[below left]  at (A) {$A$};
\node[below right] at (B) {$B$};

\end{tikzpicture}
}
\caption{The setting of Lemma 6: a broken line over the chord $[AB]$ with segment lengths $s_i$ and signed turning angles $\theta_i=\pi-\alpha_i$.}
\label{fig:lemma6}
\end{figure}

\begin{proof}
We need to prove that 
$\Sigma_i^{N-1}\vert\theta_i\vert\geq\sqrt{2c\over l}$.
As an immediate corollary, for some $i$, $\vert \pi-\alpha_i\vert>{1\over N-1}\sqrt {2c\over l}$.
Note that $\beta+\gamma=\Sigma_{i=1}^{N-1}\theta_i$.
Projecting the broken line on the line $(AB)$
we see that $s_1\cos\beta+s_2\cos(\beta-\theta_1)+s_3\cos(\beta-\theta_1-\theta_2)+\ldots +s_N\cos(\beta-\theta_1-\ldots -\theta_{N-1})=|AB|=l-c$. Observe that all angles in this formula do not
exceed 
$\Sigma_i\vert\theta_i\vert$. 
Therefore, $c=2s_1\sin^2{\beta\over 2}+2s_2\sin^2{\beta-\theta_1\over 2}+\ldots +2s_N\sin^2{\beta-\theta_1-\ldots -\theta_{N-1}\over 2}\leq {1\over 2}(s_1\beta^2+s_2(\beta-\theta_1)^2+\ldots s_N(\beta-\theta_1-\ldots -\theta_{N-1})^2)\leq {l\over 2}(\Sigma_i\vert\theta_i\vert)^2$. 

\end{proof}

\begin{corollary}
Assume that in the previous lemma the lengths
of all segments of the broken line are equal to $1$ and, therefore, $N=l$. Then
$\Sigma_{i=1}^{l-1}|\pi-\alpha_i|\geq\sqrt{2c\over l}$,
and, therefore, $\max_i|\pi-\alpha_i|\geq \sqrt{2c\over l^3}$.
\end{corollary}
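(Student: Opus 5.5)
This is a direct specialization of Lemma 6, so I would apply Lemma 6 with all segment lengths set to $1$. The broken line is an $\varepsilon$-path from $A$ to $B$. It consists of unit edges, because every integer side was subdivided into unit edges when $G_n$ was formed. So $s_i=1$ for every $i$, and the total length is $l=\Sigma_{i=1}^N s_i=N$. The number of inner vertices is then $N-1=l-1$.

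Lemma 6 gives
$$\Sigma_{i=1}^{l-1}|\theta_i|=\Sigma_{i=1}^{l-1}|\pi-\alpha_i|\geq\sqrt{2c\over l}.$$
The only thing to check is that the hypotheses of Lemma 6 hold, in particular that the broken line is a graph over $[AB]$. The proof of Lemma 6 does not really use this: it only uses the projection identity and the bound $|\beta-\theta_1-\ldots-\theta_k|\le\Sigma_i|\theta_i|$, where $\beta+\gamma=\Sigma_i\theta_i$. I would note that this bound still holds without the graph assumption. Alternatively, for $\varepsilon$-paths with $\varepsilon\le{1\over p}$, the total turning is less than $p\varepsilon\le 1$, so the path is monotone in the direction of $(AB)$.

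For the second assertion I would use averaging. There are $l-1$ terms, so some $i$ satisfies
$$|\pi-\alpha_i|\geq{1\over l-1}\sqrt{2c\over l}>{1\over l}\sqrt{2c\over l}=\sqrt{2c\over l^3}.$$
If $l=1$ there are no inner vertices. In that case $c=0$, because a single segment has length exactly $|AB|$, and the statement is vacuous.

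I do not expect a real obstacle here. The only point needing care is the justification of the graph hypothesis, or the remark that it is not needed.
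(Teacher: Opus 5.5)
Your proposal is correct and is essentially the paper's argument, which the paper leaves implicit: set $s_i=1$ and $N=l$ in Lemma 6, then average over the $l-1$ inner vertices using $\frac{1}{l-1}>\frac{1}{l}$. The discussion of the graph hypothesis is unnecessary, since the corollary explicitly places you in the setting of Lemma 6; also, the middle strict inequality in your averaging chain should be $\geq$ so that it covers the case $c=0$.
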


It is easy to see that the previous lemma
holds for polyhedral surfaces where the curvature at all inner vertices is non-positive.

\begin{corollary}

Let $\Sigma$ be a polyhedral surface homeomorphic to the disc such that the curvature at all
inner vertices of $\Sigma$ is non positive.
Consider a simplicial path $\gamma$ in
$\Sigma$ connecting boundary vertices $A$ and $B$.
Denote the number of segments by $N$, their lengths by $s_1,\ldots, s_N$, the length of the curve
(equal to $\Sigma_{i=1}^Ns_i$) by $l$, and $l-dist_\Sigma(A,B)$ by $c$. Further, denote the angles
between the $i$th and the $(i+1)$st segments by $\alpha_i$, $i=1,\ldots, N-1$.
Assume that for all $i$ $|\pi-\alpha_i|\leq {1\over N}$,
the total absolute curvature of $\Sigma$ does not exceed $0.1$.
Then there exists $i$ such that $\alpha_i$
satisfies the inequality $\vert\alpha_i-\pi\vert>{1\over N-1}{\sqrt {2c\over l}}$. Denote the outer angles $\pi-\alpha_i$ at $N-1$ inner vertices of 
the broken line by $\theta_i$, $i=1, \ldots , N-1$. (Note that $\theta_i$ can be negative, if $\alpha_i>\pi$).
Then $\Sigma_{i=1}^{N-1}\vert \theta_i\vert\geq \sqrt{2c\over l}$.

\end{corollary}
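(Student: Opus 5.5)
The plan is to reduce to Lemma 6 by developing a neighbourhood of $\gamma$ into the Euclidean plane. The hypothesis that all inner vertices of $\Sigma$ have non-positive curvature makes $\Sigma$ a locally CAT(0) polyhedral disc. By the Cartan--Hadamard/Alexandrov comparison theory for such discs, this gives two facts: geodesics in $\Sigma$ between any two points are unique, and distances in $\Sigma$ dominate distances in comparison configurations in the plane.

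First, cut $\Sigma$ along the minimal geodesic $\sigma$ from $A$ to $B$, and consider the region $D\subset\Sigma$ bounded by $\gamma$ and $\sigma$. If $\gamma$ crosses $\sigma$, we split it into sub-arcs and treat each bigon separately; the lengths and excesses $c$ add, and the quantity $\sqrt{2c/l}$ is subadditive in the needed direction, because $\sum\sqrt{c_j}\ge\sqrt{\sum c_j}$ while each $l_j\le l$. The assumption $|\pi-\alpha_i|\le 1/N$ keeps $\gamma$ almost straight. Together with total absolute curvature at most $0.1$, this ensures that the angles $\beta,\gamma$ at $A$ and $B$ are small and that $D$ is a genuine disc.

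Second, apply the Gauss--Bonnet formula to $D$. Since $\sigma$ is a geodesic, the turning of $\partial D$ along $\sigma$ comes only from the vertices of $\Sigma$ lying on $\sigma$, and those contributions have the favourable sign. This yields $\beta+\gamma=\sum\theta_i+\kappa(D)$ with $\kappa(D)\le 0$, where $\kappa(D)$ is the total curvature of the interior vertices of $D$.

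Then I would repeat the projection computation of Lemma 6 intrinsically. Parametrize $\gamma$ and compare the length of $\gamma$ with $\mathrm{dist}_\Sigma(A,B)$. To do this, develop the strip $D$ into the plane along $\gamma$, i.e. unfold the polygons of $D$ adjacent to $\gamma$. Non-positive curvature means the development of $\sigma$ is a curve from $A'$ to $B'$ of length $\mathrm{dist}_\Sigma(A,B)$ that is at least the chord $|A'B'|$. So $c_{\mathrm{planar}}:=l-|A'B'|\ge l-\mathrm{dist}_\Sigma(A,B)=c$. The developed image of $\gamma$ is a planar broken line with the same segment lengths $s_i$ and the same turning angles $\theta_i$, since development preserves angles along $\gamma$.

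Finally, apply Lemma 6 to the developed broken line. This gives $\sum|\theta_i|\ge\sqrt{2c_{\mathrm{planar}}/l}\ge\sqrt{2c/l}$, and the pigeonhole consequence for $\max|\pi-\alpha_i|$ follows as in Lemma 6.

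The main obstacle is the graph hypothesis of Lemma 6: the developed broken line must be a graph over $[A'B']$ and must not self-overlap. Here I would use the smallness assumptions: $|\theta_i|\le 1/N$ gives total turning at most $1$, and the curvature bound is $0.1$. Together these keep every segment direction within angle less than $\pi/2$ of the chord, so the projection argument in Lemma 6 goes through verbatim. In fact Lemma 6's proof only needs those projections, not embeddedness.
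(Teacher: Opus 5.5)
There is a genuine gap, and it sits in the step that carries all the content: the inequality $|A'B'|\le \mathrm{dist}_\Sigma(A,B)$ for your planar development of $\gamma$.

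You unfold only the faces of $D$ adjacent to $\gamma$. So $\sigma$ in general does not lie in the developed region, and ``the development of $\sigma$'' is undefined. It cannot be defined by unfolding all of $D$ either. Around an interior vertex $v$ of $D$ with cone angle $2\pi+\omega_v>2\pi$, the faces cannot be laid flat consistently. Any development therefore has to tear $D$ along a cut from $v$ (to $\sigma$, if you want to keep your development of $\gamma$). The image of $\sigma$ then becomes a disconnected chain whose pieces are displaced by rotations about the images of such vertices. The triangle inequality then gives only $|A'B'|\le \mathrm{dist}_\Sigma(A,B)$ plus the sizes of these jumps, which is useless.

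Your argument works when $D$ contains no interior singular vertex. That is exactly the case in which the curvature hypothesis plays no role. The sentence ``non-positive curvature means\dots'' asserts the comparison theorem you need rather than proving it. The Gauss--Bonnet identity of your second step controls only the total turning, so it does not supply the comparison either. (The graph condition of Lemma 6, which you single out as the main obstacle, is indeed harmless, for the reason you give.)

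The paper obtains the missing comparison from Reshetnyak's majorization theorem. It first glues flat pieces to $\partial\Sigma$ to get a CAT(0) disc $\tilde\Sigma$; this can only decrease $\mathrm{dist}(A,B)$, hence only increase $c$. It then majorizes the closed curve formed by $\gamma$ and a shortest path from $B$ to $A$. This yields a convex planar broken line with the same segment lengths $s_i$, angles $\pi-|\pi-\alpha_i|$ (so turning angles $|\theta_i|$), and endpoint distance at most $\mathrm{dist}_{\tilde\Sigma}(A,B)\le \mathrm{dist}_\Sigma(A,B)$. Since Lemma 6 depends only on $\sum|\theta_i|$, applying it to this convex curve gives the corollary.

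Your signed-development inequality is a stronger statement than what is needed. If you want to keep it, you must prove it. One possible route is to excise, at each negative-curvature vertex of $D$, an isosceles wedge of angle $\omega_v$ with base on $\sigma$. This leaves $\gamma$ unchanged, shortens $\sigma$, keeps $\sigma$ locally concave as seen from $D$, and eventually makes $D$ flat so that it develops. You would then also have to handle wedges that contain other vertices or reach $\gamma$. Majorization avoids all of this.
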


\par\noindent{\bf Remark.} 
If $\Sigma$ is the Euclidean plane, the constraint on angles $\alpha_i$
implies that each perpendicular to the straight line segment $[AB]$ intersects
$\gamma$ at one point. 

\begin{proof}
First, we will glue flat polygons to the boundary of $\Sigma$ to obtain a larger
domain $\tilde\Sigma$ such that all vertices
on its boundary have angles $\leq \pi$.
Thus, $\tilde\Sigma$ will be a CAT(0) space.
The length $d$ of the minimal geodesic $\tau$ between $A$ and $B$ in $\tilde\Sigma$ will be less than or equal to $dist_\Sigma(A,B)$. Therefore $\tilde c$ calculated for $\tilde \Sigma$ in the same way $c$ was calculated for $\Sigma$ will be greater than or equal to $c$. Reshetnyak's
majorization theorem implies that there
exists a convex curve $\tilde\gamma$ in the Euclidean plane
with the same number $N$ of straight line segments as in $\gamma$, the same lengths of segments as in $\gamma$, the angles
equal to $\pi-|\pi-\alpha_i|$ for all $i$,
and the distance between the endpoints
of $\gamma$ in the plane is less than or equal to
the distance between $A$ in $B$ in $\tilde\Sigma$,
and, therefore, the distance between $A$ and $B$ in $\Sigma$. The assumption about the angles in the corollary implies that the angles
$\alpha_i\in ({\pi\over 2}, {3\pi\over 2})$.
Therefore, $\tilde\gamma$ does not ``backtrack" and is the graph of a function on the segments connecting its endpoints $\tilde A$, $\tilde B$.
Now we are going to apply Lemma 6 to
the plane curve $\tilde\gamma$ that has the same
absolute values of the deviations of its angles from $\pi$. Denoting the endpoints of $\tilde\gamma$ by $\tilde A$, $\tilde B$, we see that $\Sigma_i|\pi-\alpha_i|\geq \sqrt{2(length(\tilde\gamma)-dist_{R^2}(\tilde A,\tilde B))\over length(\tilde\gamma)}\geq \sqrt{2\tilde c\over l}$.
As $\tilde c\geq c$, the corollary follows.

\end{proof}

In the next corollary we already do not assume that $\Sigma$ is non-positively curved,
but we will need to subtract an upper bound
for the total positive curvature from our estimates.

\begin{corollary}

Let $\Sigma$ be a polyhedral surface homeomorphic to the disc. Let $\mu$ be the total positive curvature, that is, the sum of angular defects $2\pi - \beta_v$ over all
inner vertices $v$ of $\Sigma$, where the angular defect (curvature) is positive.
Consider a simplicial path $\gamma$ in
$\Sigma$ connecting boundary vertices $A$ and $B$. 
Denote the number of segments by $N$, their lengths by $s_1,\ldots, s_N$, the length of the curve
(equal to $\Sigma_{i=1}^Ns_i$) by $l$, and $l-dist_\Sigma(A,B)$ by $c$. Further, denote the angles
between the $i$th and the $(i+1)$st segments by $\alpha_i$, $i=1,\ldots, N-1$.
Assume that for all $i$ $|\pi-\alpha_i|\leq {1\over N}$,
and the total absolute curvature of $\Sigma$ does not exceed $0.1$. 
Then there exists $i$ such that $\alpha_i$
satisfies the inequality $\vert\alpha_i-\pi\vert>{1\over N-1}({\sqrt {2c\over l}}-\mu)$. Denote the outer angles $\pi-\alpha_i$ at $N-1$ inner vertices of 
the broken line by $\theta_i$, $i=1, \ldots , N-1$. (Note that $\theta_i$ can be negative, if $\alpha_i>\pi$.) 
Then $\Sigma_{i=1}^{N-1}\vert \theta_i\vert\geq \sqrt{2c\over l}-\mu$.

\end{corollary}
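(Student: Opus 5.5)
We reduce Corollary 8 to the planar argument of Lemma 6, as in the proof of Corollary 7. The difference is that positive curvature can no longer be absorbed by Reshetnyak majorization, so we account for it in the turning of the tangent direction.

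First develop $\gamma$ into the plane. Its development is a broken line $\hat\gamma$ with segment lengths $s_1,\dots,s_N$ and turning angles $\theta_i$. We compare $|\hat A\hat B|$ with $dist_\Sigma(A,B)$. Let $\tau$ be a shortest path from $A$ to $B$ in $\Sigma$, of length $d=dist_\Sigma(A,B)$. Since $|\theta_i|\le 1/N$ and $\mu\le 0.1$, and since we may assume the sides of the development are embedded (otherwise we cut $\Sigma$ along $\tau$ and argue in the disk bounded by $\gamma\cup\tau$), we apply Gauss--Bonnet to that disk. The direction of $\tau$ relative to $\gamma$ is then controlled by $\sum\theta_i$ plus the enclosed curvature. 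Project each edge $e_j$ of $\gamma$ onto the direction of $\tau$, transported along the disk. The angle between $e_j$ and $\tau$ is $\beta-\theta_1-\dots-\theta_{j-1}+\delta_j$, where $\beta$ is the initial angle at $A$ and $\delta_j$ is the curvature enclosed in the corresponding sub-polygon. Hence $|\delta_j|\le$ (total curvature inside) $\le\mu$ once negative curvature is handled.

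Negative curvature only helps. As in Corollary 7, we glue flat pieces or flatten negatively curved vertices: the negative part can be majorized away, because it only makes $\tau$ shorter relative to a comparison configuration, which increases the effective $c$. We therefore expect that only the positive part $\mu$ enters, and this is where the hypothesis that the total absolute curvature is at most $0.1$ guarantees that all angles stay in $(\pi/2,3\pi/2)$, so there is no backtracking.

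We then run the computation of Lemma 6:
\[
l-c=d\ \ge\ \sum_j s_j\cos\Bigl(\beta-\sum_{i<j}\theta_i+\delta_j\Bigr).
\]
The inequality goes in this direction because a curve of given turning projects onto a geodesic of length at most $d$. Gauss--Bonnet for the region bounded by $\gamma\cup\tau$ also gives $\beta+\gamma_B=\sum\theta_i+\kappa$, with $\kappa$ the enclosed curvature, $\kappa\le\mu$. Consequently every angle appearing in the cosines is bounded in absolute value by $\sum|\theta_i|+\mu$. Using $1-\cos x\le x^2/2$, we get
\[
c\le \tfrac{l}{2}\Bigl(\sum|\theta_i|+\mu\Bigr)^2,
\]
so $\sum|\theta_i|\ge\sqrt{2c/l}-\mu$. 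The pointwise bound $|\alpha_i-\pi|>\frac{1}{N-1}(\sqrt{2c/l}-\mu)$ follows by averaging.

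The main obstacle is rigorously justifying the sign conventions. First, the defect between $d$ and the projected length of $\gamma$ must involve only the positive curvature $\mu$, with the negative curvature entering only favorably. Second, $\tau$ may touch $\gamma$ or the boundary, so the Gauss--Bonnet regions need not be disks. To handle this, I would first try splitting at the intersection points of $\tau$ and $\gamma$ into sub-disks, proving the estimate on each piece, and summing. Since $\sqrt{\cdot}$ is subadditive in the right direction after Cauchy--Schwarz, summing $c_k\le\frac{l_k}{2}(\Theta_k+\mu_k)^2$ gives $c\le\frac{l}{2}(\Theta+\mu)^2$.
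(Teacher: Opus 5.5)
There is a genuine gap, and it is the step you flag as the main obstacle. Everything rests on $d\ge\sum_j s_j\cos\bigl(\beta-\sum_{i<j}\theta_i+\delta_j\bigr)$ with $|\delta_j|\le\mu$, and this is asserted, not proved.

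In a flat disc it is the projection identity of Lemma~6. With cone points between $\gamma$ and $\tau$, however, there is no ``direction of $\tau$ transported along the disk'' for which it survives. The geodesics orthogonal to $\tau$ overlap behind a vertex of positive defect and leave an uncovered wedge behind a vertex of negative defect. So the angle of $\gamma$ to this family jumps inside segments, and the foot point on $\tau$ does not advance by $s_j\cos(\cdot)$ along the $j$th segment. For example, it stalls while $\gamma$ crosses such a wedge, which works against your inequality. Your justification, that the projection of $\gamma$ lands in a geodesic of length $d$, only controls the image of the projection. It bounds the unsigned advance of the foot point from \emph{below} by $d$, which is the opposite of what you need.

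Second, the two signs of curvature are handled by incompatible tools. Gauss--Bonnet in sub-polygons puts the net enclosed curvature of both signs into $\delta_j$, so at best it subtracts the total absolute curvature, whereas the statement subtracts only the positive part $\mu$. The tool that makes negative curvature free is Reshetnyak majorization, as in Corollary~7. It needs a disc without positive curvature, so it cannot be invoked while positive cone points are still inside. Nor can negatively curved vertices be ``flattened'' without changing $\gamma$ or $\tau$.

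The missing idea, which is how the paper argues, is to remove the positive curvature first. This is done by a surgery whose cost is charged to the turning of $\gamma$ and whose effect on $c$ is favorable.
\begin{itemize}
\item Split the region between $\gamma$ and a minimal geodesic $g$ from $A$ to $B$ into discs $D$ bounded by arcs $\gamma_D\subset\gamma$ and $g_D\subset g$.
\item For each interior vertex $v$ of $D$ of positive defect, take two minimizing geodesics from a point $x_v\in\gamma_D$ to a point $y_v\in g_D$ bounding a digon that contains $v$. Excise the digon and glue its two sides by distance from $x_v$.
\item Then $\gamma_D$ keeps its length and gains turning at the points $x_v$ of total size at most $\mu_D$.
\item Meanwhile $g_D$ gains corners of angle $<\pi$, so the distance between the endpoints can only decrease, and $c(D')\ge c(D)$.
\item The new disc $D'$ has only non-positive curvature, so Corollary~7 gives $\sum_{i\in D}|\theta_i|+\mu_D\ge\sqrt{2c(D)/l(D)}$.
\end{itemize}
Summing over the discs yields $\sum_i|\theta_i|\ge\sqrt{2c/l}-\mu$; your splitting-and-summing step works for this. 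The bound on a single $|\alpha_i-\pi|$ then follows by averaging.
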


\begin{proof}
We will be proving the lower bound for the sum of the angles $\theta_i$. The lower bound for $\max_i|\pi-\alpha_i|$ will be its immediate corollary.

Consider a minimal geodesic $g$ between $A$ and $B$. The curve $\gamma$ can intersect the geodesic $g$ at several points. The domain
of $\Sigma$ between $g$ and $\Sigma$ splits into several discs each bounded by the union of an arc of $g$ and and arc of $\gamma$. The interiors of these discs do not intersect, but the discs are pairwise joined at the points of intersection of $g$ and $\gamma$.
It is clear that the desired lower bound for the sum of absolute values of outer angles of $\gamma$ would follow the analogous statement for each of the individual discs. Let $D$ be one of these discs and $\mu_D$ the total positive curvature in all inner vertices of $D$.
The boundary of $D$ consists of an arc $\gamma_D$ of $\gamma$ and an arc $g_D$ of $g$. Parametrize both $g_D$ and $\gamma_D$
proportionally to the arclength and connect
the corresponding points by minimizing geodesics. If there are inner vertices of positive curvature, none of these geodesics can pass through such vertices. As an almost immediate corollary for each vertex $v$ of positive curvature in ${\rm Int}(D)$, there exist a pair of minimizing geodesics in $D$ between a point $x_v\in \gamma_D$ and a point $y_v\in g_D$. We are going to remove the interior of each such geodesic digon, and identify the points on both geodesics forming the boundary of the digon that are at the same distance from $x$. 

As the result, we will get rid of all inner vertices with positive curvature in $D$. The resulting domain, $D'$ will be homeomorphic to the disc. The arc $\gamma_{D'}$
will have the same length, but might acquire
new angles at vertices $x_v$. The sum of the outer angles at these vectors can be majorized by $\mu_D$, and this estimate, $\mu_D$, needs to be subtracted from the lower bound $\sqrt{2c(D')\over l(D')}$ for
$\Sigma_i|\theta_i|$. As we observed, $l(D')=l(D)$ (as the length if $\gamma_D$ did not change). However, $g_{D'}$ acquired angles $<\pi$ at points $y_v$ , and the length of the shortest geodesic between the common endpoints of $g_{D'}$ and $\gamma_{D'}$ can be less than the length of $g_D$. As the result, $c(D')\ge c(D)$, and $\sqrt{2c(D)\over l(D)}-\mu_D$ will be the desired lower bound. Summing these estimates
over all disks $D$, we obtain the lower bound in the corollary.

\end{proof}

\subsection{Lemma 7 and its generalizations for polyhedral surfaces}

\begin{lemma}
Consider a path $\gamma$ of length $l<p$ in the plane between $A=\gamma(0)$ and $B=\gamma(1)$ (Figure~\ref{fig:lemma7}).
Denote $l-|AB|$ by $c$.
Then the distance between each point of $\gamma$ and the straight line $(AB)$ does not exceed $\sqrt{2pc}$.
\end{lemma}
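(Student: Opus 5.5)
Take an arbitrary point $D$ of $\gamma$ and let $h$ denote its distance to the line $(AB)$. The plan is to use the triangle inequality along $\gamma$: the portion of $\gamma$ from $A$ to $D$ has length at least $|AD|$, and the portion from $D$ to $B$ has length at least $|DB|$. Hence
$$|AD|+|DB|\le l=|AB|+c.$$
So the whole question reduces to an elementary planar one. Among all points $D$ with $|AD|+|DB|\le |AB|+c$, how far can $D$ be from $(AB)$? The admissible points form the solid ellipse with foci $A,B$ and major axis $a=(|AB|+c)/2$. Its semi-minor axis is
$$b=\sqrt{a^2-\left(\frac{|AB|}{2}\right)^2}=\frac12\sqrt{(|AB|+c)^2-|AB|^2}=\frac12\sqrt{2|AB|c+c^2}.$$
Every point of the ellipse lies within distance $b$ of the major axis $(AB)$, so $h\le b$.

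If one prefers to avoid the ellipse, drop the perpendicular from $D$ to $(AB)$ with foot $H$, and write $x=|AH|$ and $y=|HB|$. Note that $x+y\ge |AB|$ whether or not $H$ lies on $[AB]$. Then use $\sqrt{x^2+h^2}\ge x+\frac{h^2}{2\sqrt{x^2+h^2}}$ and the analogous bound for $y$. Since both hypotenuses are at most $l<p$, this gives
$$c\ge |AD|+|DB|-|AB|\ge \frac{h^2}{2p}\cdot 2=\frac{h^2}{p},$$
so $h\le\sqrt{pc}\le\sqrt{2pc}$. This route is cleaner and already gives the stated constant with room to spare.

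It remains to check that the ellipse bound is also within $\sqrt{2pc}$. We have $2|AB|c+c^2\le 2lc\le 2pc$, because $c^2\le lc-|AB|c$ is equivalent to $c\le l-|AB|$, which holds with equality. Therefore $b\le \frac12\sqrt{2pc}\le\sqrt{2pc}$.

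There is no real obstacle in this argument. The only points that need care are the cases where $D$ projects outside the segment $[AB]$ and where $D$ is an endpoint. In both cases the inequality $x+y\ge|AB|$, or equivalently the ellipse description, still applies unchanged.
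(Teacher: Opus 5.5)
Your proposal is correct and is essentially the paper's argument: the triangle inequality along $\gamma$ gives $|AD|+|DB|\le |AB|+c$, and Pythagoras at the foot of the perpendicular then bounds $h$. Your uniform treatment, using $x+y\ge |AB|$ together with $|AD|,|DB|\le l$, replaces the paper's split into ``foot inside/outside $[AB]$'' and its cruder denominator bound $\sqrt{|AB|^2+p^2}+|AB|<3p$; the ellipse computation even gives the sharper bound $h\le \tfrac12\sqrt{2lc}$, where the step $2|AB|c+c^2=c(|AB|+l)\le 2lc$ uses $|AB|\le l$.
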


\begin{figure}[ht]
\centering
\resizebox{\ifdim\width>\textwidth \textwidth\else\width\fi}{!}{%
\begin{tikzpicture}[
   >={Stealth[length=2.2mm]},
   pth/.style={line width=1.0pt, blue!70!black},
   chord/.style={line width=1.0pt, black},
   aux/.style={line width=0.7pt, dashed, red!70!black},
   ell/.style={line width=0.7pt, dotted, gray!75!black},
   dot/.style={circle, fill=blue!70!black, inner sep=1.1pt},
   edot/.style={circle, fill=black, inner sep=1.7pt},
   scale=1.0]

\coordinate (A) at (0,0);
\coordinate (B) at (7,0);
\coordinate (X) at (3.5,1.02);
\coordinate (Y) at (3.5,0);

\draw[ell] (3.5,0) ellipse (3.75 and 1.35);

\draw[chord] (A)--(B);
\draw[pth] (0,0)--(1.0,0.46)--(1.9,0.34)--(2.7,0.72)--(3.5,1.02)
           --(4.4,0.66)--(5.3,0.80)--(6.1,0.34)--(7,0);
\node[blue!70!black,font=\footnotesize] at (1.55,0.66) {$\gamma$};

\draw[aux] (X)--(Y);
\draw[aux] (A)--(X);
\draw[aux] (X)--(B);
\node[red!70!black,font=\footnotesize,right=1pt] at ($(X)!0.5!(Y)$) {$h$};
\node[red!70!black,font=\footnotesize,fill=white,inner sep=1pt] at (1.55,0.42) {$|AX|$};
\node[red!70!black,font=\footnotesize,fill=white,inner sep=1pt] at (5.45,0.42) {$|XB|$};
\draw[line width=0.6pt,red!70!black] (3.5,0.16) -- (3.66,0.16) -- (3.66,0);
\draw[<->,line width=0.6pt] (0,-0.72)--(3.5,-0.72);
\node[font=\footnotesize,below] at (1.75,-0.70) {$a=|AY|$};
\draw[<->,line width=0.6pt] (3.5,-0.72)--(7,-0.72);
\node[font=\footnotesize,below] at (5.25,-0.70) {$b=|YB|$};

\node[dot] at (X) {};
\node[edot] at (A) {};
\node[edot] at (B) {};
\node[above] at (X) {$X$};
\node[below left] at (A) {$A$};
\node[below right] at (B) {$B$};
\node[font=\footnotesize,anchor=north east] at (3.44,-0.04) {$Y$};

\end{tikzpicture}
}
\caption{The setting of Lemma 7: a point $X$ of $\gamma$, its orthogonal projection $Y$ onto $(AB)$, and the ellipse with foci $A,B$ that confines $\gamma$.}
\label{fig:lemma7}
\end{figure}

\begin{proof} Let $X=\gamma(t)$ for some $t\in [0,1]$. Denote the projection of $X$ to $(AB)$ by $Y$, and the distance from $X$ to $(AB)$ by $h$.

First, assume that the orthogonal
projection of $X$ 
to the line $(AB)$ is between $A$ and $B$.
Denote $|AY|$ by $a$, $|YB|$ by $b$, and $|XY|$ by $h$. It is obvious that $h\leq{l\over 2}\leq {p\over 2}<p$. Note that $|AX|+|XB|\leq l=|AB|+c=a+b+c$.
Using the Pythagorean theorem, $\sqrt{a^2+h^2}-a+\sqrt{b^2+h^2}-b\leq c$. This inequality can be rewritten as 
$$h^2({1\over \sqrt{h^2+a^2}+a}+{1\over \sqrt{b^2+h^2}+b})\leq c.\ \ \ \ (1)$$
We can replace the left-hand side of inequality (1) with a smaller value ${2h^2\over \sqrt{|AB|^2+p^2}+|AB|}> {2h^2\over 3p}>{h^2\over 2p}$. Now we see that $h< \sqrt{2pc}$.

Now assume that the projection $Y$ from $X$ to $(AB)$ is outside the segment $[AB]$. The Pythagorean theorem implies that the sum $|AX|+|XB|$ is at least $h+\sqrt{
h^2+|AB|^2}$ and $c\geq |AX|+|XB|-|AB|\geq h+\sqrt{h^2+|AB|^2}-|AB|=h+{h^2\over \sqrt{h^2+|AB|^2}+h}\geq {2h^2\over \sqrt{|AB|^2+p^2}+p}>{2h^2\over 3p}>{h^2\over 2p}$. Again, $h<\sqrt{2pc}$.

\end{proof}

\par\noindent

Note that Lemma 7 holds for Alexandrov spaces
of non-positive curvature (instead of the plane).

\begin{lemma}
Consider a path $\gamma$ of length $\leq p$ in an Alexandrov space of curvature $\leq 0$ between $A=\gamma(0)$ and $B=\gamma(1)$.
Denote $l-dist(A,B)$ by $c$.
Then the distance between each point of $\gamma$ and the geodesic $[AB]$ does not exceed $\sqrt{2pc}$.
\end{lemma}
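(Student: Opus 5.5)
The plan is to reduce the statement to the planar Lemma 7 by a comparison argument. The natural tool is the CAT(0) comparison for geodesic triangles. Let $X=\gamma(t)$ be any point of $\gamma$, and let $h=dist(X,[AB])$.

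First, I will use $|AX|+|XB|\le l$, since the two subarcs of $\gamma$ from $A$ to $X$ and from $X$ to $B$ have total length $l$. Hence $|AX|+|XB|-|AB|\le c$, where all distances are taken in the space.

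Next, I consider the geodesic triangle $AXB$ with sides $[AX]$, $[XB]$, $[AB]$. I form its Euclidean comparison triangle $\bar A\bar X\bar B$, which has the same three side lengths. The planar Lemma 7, applied to the two-segment path $\bar A\bar X\bar B$ of length $|AX|+|XB|\le l\le p$ with excess at most $c$, gives $dist(\bar X,(\bar A\bar B))\le\sqrt{2pc}$. The proof of Lemma 7 in fact handles both cases, so it also bounds the distance from $\bar X$ to the segment $[\bar A\bar B]$: in the case where the projection falls outside the segment, the bound on $h$ came from $|AX|+|XB|$, and that argument also controls $|\bar X\bar A|$ or $|\bar X\bar B|$.

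Let $\bar Y$ be the point of $[\bar A\bar B]$ nearest to $\bar X$, so $|\bar X\bar Y|\le\sqrt{2pc}$. Let $Y$ be the corresponding point on $[AB]$, at the same distance from $A$. The CAT(0) inequality for the distance from a vertex to a point on the opposite side gives $|XY|\le|\bar X\bar Y|$. Hence $h\le |XY|\le\sqrt{2pc}$.

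The step that needs the most care is making sure that the planar estimate really bounds the distance to the segment, not just to the line, including the case where the projection falls outside the segment. If Lemma 7's second case does not give $|\bar X\bar A|\le\sqrt{2pc}$ directly, I would redo it: there, $c\ge |\bar X\bar A|+|\bar X\bar B|-|\bar A\bar B|$, and an obtuse angle at $\bar A$ gives $|\bar X\bar B|^2\ge |\bar X\bar A|^2+|\bar A\bar B|^2$. The same computation as in Lemma 7 then yields $|\bar X\bar A|^2/(2p)< c$. Besides this, I only need that a geodesic $[AB]$ exists and that geodesic triangles in the space satisfy the CAT(0) comparison. Both hold in the geodesic Alexandrov spaces of curvature $\le 0$ considered here, for example the space $\tilde\Sigma$ from Corollary 7's proof.
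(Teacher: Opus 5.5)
Your proof is correct, but it takes a different route from the paper's.

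\textbf{The paper's route.} The paper works in the space itself. It takes the nearest point $Y\in[AB]$ to $X$ and uses a first-variation argument to get $\angle XYA,\angle XYB\ge\pi/2$ (or $\angle XAB\ge\pi/2$ if $Y=A$). It then applies the hinge form of the Alexandrov--Toponogov comparison to the subtriangles $AXY$ and $BXY$, obtaining $|AX|\ge\sqrt{h^2+|AY|^2}$ and $|XB|\ge\sqrt{h^2+|YB|^2}$, and redoes the planar computation of Lemma 7.

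\textbf{Your route.} You form a single comparison triangle $\bar A\bar X\bar B$, do all the case analysis in the plane, and pull the bound back through the defining CAT(0) inequality $|XY|\le|\bar X\bar Y|$.

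Lemma 7's second case, as written, bounds only the distance to the line $(\bar A\bar B)$, so your supplement is genuinely needed. It is correct: an obtuse angle at $\bar A$ gives $c\ge|\bar X\bar A|\ge|\bar X\bar A|^2/p$, because $|\bar X\bar A|=|AX|\le l\le p$. Write $\le c$ rather than $<c$ so that the case $c=0$ is covered.

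\textbf{Comparison.} Your version needs no angle or first-variation facts at the foot point, and it uses Lemma 7 essentially as a black box. Both versions require the comparison to hold for the whole triangle, i.e.\ global CAT(0) and not merely locally nonpositive curvature; this is true for $\tilde\Sigma$.

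What the paper's hinge-at-$Y$ formulation buys is that it carries over to Lemma 9. There, after the discs containing positive curvature are cut out, the image of $[AB]$ is only a broken geodesic. So $AXB$ is no longer a geodesic triangle, and your comparison-triangle step does not transfer directly. By contrast, the angle $\ge\pi/2$ at $Y$ and the lengths along $[AB]$ are preserved, and $\mathrm{dist}_S(A,Y)$ can still be bounded from below via Reshetnyak majorization.
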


\begin{proof}

Let $X$ be $\gamma(t)$ for some $t$. Denote the closest to $X$ point of the geodesic $[AB]$ by $Y$, and the distance from $X$ to $Y$ by $h$. Note that
$dist(A,X)+dist(X,B)\leq l$. Therefore, $(dist(A,X)-dist(A,Y))+(dist(X,B)-dist(Y,B))\leq l-|AB|=c$. 
Consider two cases:
\par\noindent
{\bf Case 1.} $Y$ is in $(AB)$ (that is, not one of the endpoints). Note that the angles $XYA$ and $XYB$
are greater than or equal to ${\pi\over 2}$ (or, one can find a point on $AB$ closer to $X$ than $Y$).
Consider the plane triangles $X'Y'A'$ and $X'Y'B'$, with $|Y'A'|=|YA|$, $|X'Y'|=|XY|$,
$|Y'B'|=|YB|$,
$\angle A'Y'X'=\angle AYX$, $\angle B'Y'X'=\angle BYX$.

The Alexandrov-Toponogov comparison theorem implies
that $|AX|\geq |A'X'|$, $|BX|\geq |B'X'|$. On the other hand, as $\angle A'Y'X'\geq {\pi\over 2}$,
$|A'X'|\geq\sqrt{h^2+|AY|^2}$, and, similarly, 
$|B'X'|\geq \sqrt{h^2+|BY|^2}$.

Now $$c=l-dist(A,B)\geq |AX|+|XB|-|AY|-|YB|\geq {h^2\over |YB|+\sqrt{|YB|^2+h^2}}+{h^2\over |YA|+\sqrt{|AY|^2+h^2}}.\ \ \ \ (2)$$

As $h\leq {l\over 2}\leq {p\over 2}$, we can replace
the right hand side in $(2)$ by a smaller value
${2h^2\over |AB|+\sqrt{p^2+|AB|^2}}>{2h^2\over 3p}\geq {h^2\over 2p}$. So, $2pc\geq h^2$. 
\par\noindent
{Case 2.} Assume that $Y$ coincides with 
$A$ or $B$. Assume that $A$ is the closest to $X$
point of $[AB]$. Therefore, $\angle XAB\geq{\pi\over 2}$. Comparing the triangle $XAB$ with the plane triangle $X'A'B'$ with $|X'A'|=|XA|$, $|A'B'|=|AB|$
and $\angle XAB=\angle X'A'B'$, we see that $|XB|\geq |X'B'|\geq \sqrt{|AB|^2+|XA|^2}$. Denote $dist(X,[AB])=|XA|$ by $h$. Now $c\geq |XA|+|XB|-|AB|\geq h+\sqrt{h^2+|AB|^2}-|AB|=h+
{h^2\over \sqrt{h^2+|AB|^2}+h}\geq
{2h^2\over \sqrt{h^2+|AB|^2}+h}\geq {2h^2\over 3p}\geq {h^2\over 2p}$, and the lemma follows.

\end{proof}

In the next lemma we prove a similar inequality for all polyhedral surfaces glued from flat polygons, so that the curvature lives at vertices. For each vertex $v$ let $A(v)$ denote the sum of the all incident angles of polygons that have $v$ as one of the vertices.
We call $\vert A(v)-2\pi\vert$ the {\it absolute angular defect} of the surface at $v$, and the sum of the absolute angular defects over all $v$ is called {\it the total absolute curvature}, or the {\it total absolute defect}.

\begin{lemma}
Let $\Sigma$ be a polyhedral surface (that is, a surface glued from flat polygons). 
Consider a path $\gamma$ of length $l\leq p$ in $\Sigma$ between $A=\gamma(0)$ and $B=\gamma(1)$. Assume that the total absolute
curvature of the geodesic triangle $ABC$ on $\Sigma$, where $C=\gamma(t)$ is
an arbitrary point of $\gamma$, does not exceed $\phi\le {\pi\over 2}$.
Denote $l-dist(A,B)$ by $c$.
Then the distance between each point of $\gamma$ and the geodesic $[AB]$ does not exceed $x=\max\{\sqrt{{3\over 2}pc+{p^2\phi^2\over 2}}, p\phi\}$.

\end{lemma}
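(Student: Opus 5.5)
We follow the proof of Lemma 8, replacing the Alexandrov–Toponogov comparison by an explicit accounting of the curvature inside the geodesic triangle. Fix $C=\gamma(t)$ and let $Y$ be a point of the geodesic $[AB]$ closest to $C$; put $h=dist(C,Y)$. As before, $dist(A,C)+dist(C,B)\le l=dist(A,B)+c$, and $h\le l/2\le p/2$. If $h\le p\phi$ we are done, so we assume $h>p\phi$ and aim to show $h^2\le {3\over 2}pc+{p^2\phi^2\over 2}$.

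\textbf{Case 1: $Y$ is an interior point of $[AB]$.} Draw the minimal geodesic $[CY]$. It splits the triangle $ABC$ into the two geodesic triangles $AYC$ and $BYC$, each of total absolute curvature at most $\phi$. By first variation, the angles at $Y$ are $\ge \pi/2$ up to the curvature concentrated at $Y$, which is itself at most $\phi$. We then develop each triangle into the plane. For a polyhedral triangle with total absolute curvature $\le\phi$, the developed image of the side $[AC]$ has its direction rotated by at most $\phi$ relative to the flat model (this is Gauss–Bonnet applied to subtriangles; it is the same mechanism as in the polyhedral version of Lemma 4 in Remark 2). Consequently, projecting $[AC]$ onto the direction orthogonal to $[CY]$ gives
$$dist(A,C)\ge \sqrt{|AY|^2+h^2}-(\text{error}),$$
where the error is controlled by $\phi$. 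The cleanest way to arrange this is to work with angles: the angle at $Y$ in the comparison triangle is at least $\pi/2-\phi$. The law of cosines then yields
$$dist(A,C)^2\ge |AY|^2+h^2-2|AY|h\sin\phi.$$
Taking square roots and summing with the analogous bound for $B$, we obtain
$$c\ge {2h^2-2ph\phi\over 3p}.$$
We use $\sin\phi\le\phi$ together with denominators bounded by $3p/2$, exactly as in Lemma 7.

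\textbf{Case 2: $Y$ is an endpoint, say $A$.} The angle $\angle CAB$ is then $\ge\pi/2-\phi$. The same comparison gives
$$dist(C,B)\ge\sqrt{|AB|^2+h^2-2|AB|h\phi},$$
and hence an estimate of the same shape.

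It remains to solve the quadratic inequality $2h^2-2p\phi h\le 3pc$. This gives
$$h\le {p\phi+\sqrt{p^2\phi^2+6pc}\over 2},$$
and $(a+b)^2\le 2a^2+2b^2$ turns this into $h^2\le {p^2\phi^2\over 2}+{3\over 2}pc+\dots$. If this route loses a constant, we fall back on the case split $h\le p\phi$ versus $h>p\phi$. In the latter case $2ph\phi<2h^2$ is absorbed as $h^2-p^2\phi^2/\ldots$, which is presumably where the $\max$ with $p\phi$ in the statement comes from. The final constants $3/2$ and $1/2$ should come out of this bookkeeping; we would adjust the intermediate bound until they match.

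The main obstacle is the comparison step: making rigorous that a geodesic triangle of total absolute curvature $\le\phi$ (possibly with defects of both signs, so neither CAT(0) nor Alexandrov $\ge0$ applies) has side lengths controlled by the flat triangle with the same two sides and an angle perturbed by at most $\phi$. We would prove this first by cutting the triangle along geodesics from the vertex into flat pieces and developing it piece by piece. Each cone point crossed rotates the developed direction by at most its defect, and the condition $\phi\le\pi/2$ keeps the development from folding over.
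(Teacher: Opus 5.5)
\textbf{The proposal has a genuine gap: it does not reach the stated constants, and the suggested fix cannot work.}

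The proof breaks at the final step. From $c\ge\frac{2h^2-2p\phi h}{3p}$ you get $h\le h_0:=\frac12\bigl(p\phi+\sqrt{p^2\phi^2+6pc}\bigr)$, and
\[
h_0^2=\tfrac32pc+\tfrac12p^2\phi^2+\tfrac12p\phi\sqrt{p^2\phi^2+6pc}.
\]
This exceeds both $\frac32pc+\frac12p^2\phi^2$ and $p^2\phi^2$ as soon as $\phi>0$ and $c>0$. So $h\le h_0$ never gives $h\le x$.

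Splitting at $h=p\phi$ does not help. To pass from $2h^2-2p\phi h\le 3pc$ to $2h^2-p^2\phi^2\le 3pc$ you would need $2p\phi h\le p^2\phi^2$, i.e.\ $h\le p\phi/2$. That is the opposite of the branch $h>p\phi$ you are in.

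There is also a sign issue. Replacing the denominators $\sqrt{\cdot}+|AY|$ by $3p$ is legitimate only if $h^2-2|AY|h\sin\phi\ge 0$, i.e.\ $h\ge 2|AY|\sin\phi$. The assumption $h>p\phi$ does not give this.

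So the constants $\frac32$, $\frac12$ and the threshold $p\phi$ are left unproved. ``Adjusting the intermediate bound until they match'' is exactly the missing argument. Your comparison inequality is the right kind of estimate (an Alexandrov-type angle comparison for surfaces of bounded curvature). It is only sketched, but that is not where the proof fails.

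\textbf{How the paper proceeds.} The paper avoids your cross term by perturbing a length rather than an angle:
\begin{itemize}
\item It excises the digons between pairs of minimizing geodesics from $X$ to points of $[AB]$; these digons contain the positive curvature.
\item This gives a nonpositively curved space $S$ in which $[AB]$ becomes a broken geodesic with total turning $\le\phi$.
\item Reshetnyak majorization then gives $\mathrm{dist}_S(A,Y)\ge |AY|\cos\phi$.
\item A CAT(0) hinge at $Y$ with angle $\ge\pi/2$ is used to claim $\mathrm{dist}(A,X)^2\ge h^2+|AY|^2(1-\phi^2)$.
\end{itemize}
The error $|AY|^2\phi^2\le p^2\phi^2$ is independent of $h$. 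That is exactly what produces $(**)$, and the branch $h\le p\phi$ comes from the positivity condition $h\ge |AY|\phi$.

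\textbf{Do not simply import that step.} The length form fails in general, and your linear cross term is genuinely present. Take a flat cone of defect $\phi=0.2$, developed with apex $O=(0.095,0.06)$ and the removed wedge of angle $\phi$ centred on the direction $(1,1)$ from $O$. Put $Y=(0,0)$, $A=(1,0)$, $B=(-1,0)$, $X=(0,1)$. Then:
\begin{itemize}
\item $[AB]$ is minimizing, $Y$ is the unique nearest point of $[AB]$ to $X$, and $\angle AYX=\pi/2$.
\item The minimizing geodesic from $X$ to $A$ passes on the far side of $O$ and has length $\approx 1.288<1.4=\sqrt{h^2+|AY|^2(1-\phi^2)}$.
\item This is consistent with your bound, since $1.288^2\ge 2-2\sin 0.2$.
\end{itemize}
In the paper's argument, the hinge in $S$ is taken with the angle between $q([YX])$ and the broken curve $q([YA])$. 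In this example, the $S$-geodesic from $Y$ to $A$ makes an angle of only $\approx\pi/2-0.17$ with $[YX]$.

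So the stated constants will not come from a better comparison for the two triangles $AYX$, $BYX$ separately. They would need a finer global estimate, which your proposal does not supply. For instance, you could keep the true denominators (of order $2|AY|$ and $2|BY|$ rather than $3p$) and split the curvature of the two triangles as $\phi_A+\phi_B\le\phi$.
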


\begin{proof}
We start as in the proof of the previous lemma. We consider
$X=\gamma(t)$, its closest point $Y$ on $[AB]$, and
two cases, when $Y\in (AB)$, and when $Y=A$ or $B$.
In the first case we consider triangles $AXY$ and
$BXY$ but this time we cannot use the Alexandrov-Toponogov comparison theorem  for $K\leq 0$ as these triangles can contain vertices $v$ with positive curvature, ($A(v)<2\pi$). Consider, for example,
the triangle $AXY$. Our goal is to find a good lower bound
for the length of $[AX]$ in terms of $|AY|$ and $\vert XY\vert$. (Recall that $\angle AYX\geq{\pi\over 2}$.)

Consider geodesics between $X$ and a variable point
$D\in [AB]$. At some points $D$ these geodesics change continuously
with $D$. However, at each point of discontinuity $X$ can be connected with $D$ by more than one geodesics.
For each such point consider the disc $\Omega_D$ formed by the two outer geodesics from $X$ to $D$.
For each $D$ with more than one minimizing geodesic from $X$ to $D$ we glue one of two outer geodesics from
$X$ to $D$ to the other by identifying pairs of points at the same distance from $X$. Thus, we will completely
eliminate the interior of the disc $\Omega_D$. It is easy to see that each such disc should contain one or more points of positive curvature, and each point of
positive curvature is contained in one of these discs.
Thus, the result will be a space $S_{AXB}$ of non-positive curvature. Removal of the disc $\Omega_D$ decreases the angle at $D$ between $AD$ and $DB$ by
the angle $\alpha_D$ of the digon $\Omega_D$ at $D$. This angle
does not exceed the sum of angular defects at all
vertices inside $\Omega_D$. So the image of the geodesic $(AB)$ under the quotient map $q$ to $S_{AXB}$ will be not a geodesic but a broken geodesic $q((AB))$ with angles $\pi-\alpha_{D_i}$ at the vertices. Yet all its segments will have the same length as the corresponding segments of $(AB)$.
It is easy to see that the distances between $q(X)\in S$ and each point on $q((AB))$ will be the same as in the original geodesic triangle $AXB$. Finally, the angle $AYX$
in $S_{AXB}$ remain the same as
in the original triangle. In particular, the angle $AYX$ in $S_{AXB}$ is greater than or equal to ${\pi\over 2}$.

To find a lower bound for $|AX|=|q((AX))|$ we first minorize $|q((AX))|$ by
the side length in the hinge with side lengths $dist_S(A,Y)$, $dist_S(X,Y)$, and the angle $\angle_S AYX\geq {\pi\over 2}$ in the plane using the Alexandrov-Toponogov theorem. So, $dist_S(A,X)\geq
\sqrt{dist_S^2(X,Y)+dist_S^2(A,Y)}$.

Now we need a lower bound for $dist_S(A,Y)$. We can use Reshetnyak's majorization theorem (or several times the hinge version of Alexandrov-Toponogov theorem) to conclude that $dist_S(A,Y)$ can be estimated below by the distance
in the plane between the endpoints of the plane broken
line with the segments of the same length as the corresponding segments of $q((AY))\subset q((AB))$ and the angles between two consecutive segments at $D_i$
equal to $\pi-\alpha_{D_i}$. Note that $\Sigma_i\alpha_{D_i}\leq \phi_+(AXY)$, where $\phi_+(AXY)$ denotes the sum of all
positive angular defects at the vertices in the triangle
$AXY$. Now
elementary trigonometry yields the lower bound $\vert AY\vert \cos\phi_+(AXY)$. (See a very similar argument at the end of the proof of Lemma 6.) As it is the square of $dist_S(A,Y)$ that enters the hinge estimate, we square this bound:
$$dist_S^2(A,Y)\geq \vert AY\vert^2\cos^2\phi_+(AXY)=\vert AY\vert^2(1-\sin^2\phi_+(AXY))\geq \vert AY\vert^2(1-\phi^2).$$
We can get a similar lower bound for $dist_S(B,Y)$, and denoting $\vert XY\vert $ by $h$, we would like to proceed similarly to the inequality (2), and write
$$c=l-|AB|\geq {h^2-|AY|^2\phi^2\over 3p}+{h^2-|BY|^2\phi^2\over 3p}\geq {2h^2\over 3p}-{p^2\phi^2\over 3p}.\ \ \ (**)$$
However, as in (2), here we replaced smaller denominators by $3p$ which requires the validity
of the inequalities $h^2\geq |AY|^2\phi^2$ and $h^2\geq |BY|^2\phi^2$.
If both these inequalities hold, then inequality (**) is valid, and it immediately implies that
$h\leq \sqrt{{3\over 2}pc+{p^2\phi^2\over 2}}$. If either of these two inequalities is not valid, then $h^2\leq p^2\phi^2$ (as $\vert AY\vert, \vert BY\vert\leq dist(A,B)\leq p$), and $h\leq p\phi$.

It remains to consider the case when $Y$ coincides with $A$ or $B$; say $Y=A$ (the case $Y=B$ is symmetric). Then $h=|XY|=dist(X,A)$ is the distance from $X$ to $[AB]$, and, since $A$ is the closest point of $[AB]$ to $X$, we have $\angle XAB\geq{\pi\over 2}$. If $h\leq p\phi$, then $h\leq x$ and we are done; so assume $h> p\phi$.

As in the first case, we cut out the discs containing all points of positive curvature in the triangle $AXB$ to obtain a space $S=S_{AXB}$ of non-positive curvature. The quotient map $q$ preserves the distances from $q(X)$ to all points of $q((AB))$; in particular $dist_S(A,X)=|AX|=h$ and $dist_S(X,B)=|XB|$, and the angle at $A$ is preserved, so $\angle_S XAB\geq{\pi\over 2}$. Applying the hinge version of the Alexandrov--Toponogov theorem in $S$ at $A$, and bounding $dist_S(A,B)$ from below by Reshetnyak's majorization theorem exactly as above (so that $dist_S^2(A,B)\geq |AB|^2\cos^2\phi_+(AXB)\geq |AB|^2(1-\phi^2)$), we obtain
$$|XB|=dist_S(X,B)\geq\sqrt{dist_S^2(A,X)+dist_S^2(A,B)}\geq\sqrt{h^2+|AB|^2(1-\phi^2)}.$$
Since $h> p\phi\geq |AB|\phi$, the numerator below is positive, and, replacing the denominator by $3p$ as in $(**)$,
$$|XB|-|AB|\geq{h^2-|AB|^2\phi^2\over\sqrt{h^2+|AB|^2}+|AB|}\geq{h^2-|AB|^2\phi^2\over 3p}\geq{h^2-p^2\phi^2\over 3p}.$$
Therefore
$$c=l-|AB|\geq |AX|+|XB|-|AB|\geq h+{h^2-p^2\phi^2\over 3p},$$
so that $3pc\geq 3ph+h^2-p^2\phi^2\geq 2h^2-p^2\phi^2$ (as $h\leq p$), that is, $h\leq \sqrt{{3\over 2}pc+{p^2\phi^2\over 2}}\leq x$. This completes the proof.
\end{proof}

    

\begin{corollary}
Let $\Sigma$ be a polyhedral surface homeomorphic to the $2$-disk. Let $A$, $B$ be two vertices on the boundary of $\Sigma$ connected by a polyhedral path $\gamma$ of length $l\leq p$ in $\Sigma$, where $p\geq 1$. Let $\theta_i$ denote (positive or negative) turning (outer) angles at the inner vertices of $\gamma$. Assume that all $\theta_i$ satisfy $\vert\theta_i\vert\leq {1\over p}$, and, hence, $\Sigma_i\vert\theta_i\vert\leq 1$ (radian).

Assume that the total absolute curvature $\mu$ of $\Sigma$ (that is, the sum of the absolute
values of all angular defects) is less than $0.1$ radian. Further, denote $l-dist_\Sigma(A,B)$ by $c$. Let $\alpha$ be a shortest
geodesic between $A$ and $B$ in $\Sigma$. 
Then the area of the domain $D$ of $\Sigma$ between $\alpha$ and 
$\gamma$ does not exceed
$\max\{7.26p^{3\over 2}\sqrt{c}+1.61\mu p^2, 2.38\mu p^2\}$.


\end{corollary}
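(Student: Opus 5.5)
The plan is to combine Lemma 9 (the distance bound) with an area estimate for a thin neighbourhood of the shortest geodesic $\alpha$. The result should be the area of a strip of length $\le p$ and half-width $x$ from Lemma 9, corrected by a curvature term.

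First I would apply Lemma 9 with $\phi=\mu$. Any geodesic triangle $ABC$ with $C=\gamma(t)$ lies in $\Sigma$, so its total absolute curvature is at most $\mu<0.1\le\pi/2$. Hence every point of $\gamma$ lies within
\[
x=\max\Bigl\{\sqrt{\tfrac32 pc+\tfrac{p^2\mu^2}{2}},\,p\mu\Bigr\}
\]
of $\alpha$. Points of the domain $D$ between $\alpha$ and $\gamma$ need the same bound. For $z\in D$, I would take the geodesic from the point of $\alpha$ closest to $z$ and extend it until it hits $\gamma$. The condition $|\theta_i|\le 1/p$ with $\sum|\theta_i|\le1$, together with small curvature, shows that $\gamma$ is almost straight and cannot turn back over $\alpha$. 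So $D$ is contained in the $x$-neighbourhood of $\alpha$ on one side.

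Next I would bound the area of the one-sided $x$-neighbourhood of $\alpha$ in $D$. I would use Fermi-type coordinates: normal geodesic segments of length $\le x$ issuing from $\alpha$. In the flat case this gives area at most $|\alpha|x\le px$. With cone points, the normals can spread or focus, so the Jacobian is no longer $1$. The fix is to cut out the positive-curvature digons as in the proofs of Corollary 8 and Lemma 9, and to note that negative curvature adds wedges. A sector of angle $\le\mu$ and radius $\le x$ contributes area at most $\mu x^2/2$, and, more crudely, the normal field's angle deviates by at most $\mu$, so the lengths of the level curves of the distance function are at most $|\alpha|(1+O(\mu))+\mu\cdot(\text{distance})$. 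Integrating over distances up to $x$ gives
\[
\mathrm{Area}(D)\le px(1+O(\mu))+\tfrac{\mu x^2}{2}.
\]

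Finally I would substitute $x$. In the case $x=p\mu$ the bound is of order $\mu p^2$, which should give the constant $2.38$ once the $O(\mu)$ and $\mu^3$ terms are absorbed using $\mu<0.1$. In the other case I would use $\sqrt{a+b}\le\sqrt a+\sqrt b$:
\[
x\le\sqrt{1.5}\,\sqrt{pc}+\frac{p\mu}{\sqrt2}.
\]
The first summand gives $p^{3/2}\sqrt c$ with a constant like $1.23$. The paper's $7.26$ suggests a lossier route, for instance a two-sided strip, a $3p$-type length bound, and cross terms from $\mu x^2$ with $c\le p$. The second summand gives $\mu p^2/\sqrt2$, inflated to about $1.61$. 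Since the statement only claims an upper bound, I expect that even if my constants come out smaller, they stay below the stated ones.

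The main obstacle is the area bound for the tubular neighbourhood on a polyhedral surface with cone points of both signs. The surgery removing positive-curvature digons changes area and distances, so I must make sure it only decreases the area being counted, or that it controls the added area by $\mu x^2$-type terms. I would first try a comparison based on Gauss–Bonnet: the domain swept by normals from $\alpha$ up to distance $t$ has boundary-length derivative at most the turning, which is at most $\pi+\mu$ at the ends plus $\mu$ in the interior.
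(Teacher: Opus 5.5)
You have the paper's skeleton. Lemma 9 with $\phi=\mu$ puts $\gamma$ within $x=\max\{\sqrt{\tfrac32pc+\tfrac{p^2\mu^2}{2}},\,p\mu\}$ of $\alpha$. One then confines $D$ to the $x$-neighbourhood of $\alpha$, bounds the area of that neighbourhood with a curvature correction, and substitutes $x$. Your closing arithmetic is fine and has slack: any one-sided bound $\mathrm{Area}(D)\le|\alpha|x+Cx^2$ with $C\le4$ gives the stated constants. The paper uses the cruder two-sided bound $2|\alpha|x+\pi x^2+0.6\mu x^2$.

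\textbf{The gap.} The step that carries the content, the area bound on a polyhedral disc with cone points of both signs, is announced but not proved, and the mechanisms you list do not fit together. Your displayed estimate $px(1+O(\mu))+\tfrac{\mu}{2}x^2$ has an unexplained $O(\mu)$ and no end terms. Your own Gauss--Bonnet fallback (``$\pi+\mu$ at the ends'') produces a $\tfrac{\pi+\mu}{2}x^2$ term instead. More seriously, excising the positive-curvature digons of Corollary 8 and Lemma 9 removes area from $D$. A bound for the surgered surface then says nothing about $\mathrm{Area}(D)$ unless you estimate and add back what was cut out; ``make sure it only decreases the area being counted'' is exactly the wrong direction. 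The containment step is also not established. Prolonging the shortest path from $z$ to its foot $y\in\alpha$ until it meets $\gamma$ at $w$ helps only if the prolongation is still a shortest path to $\alpha$. That fails once it crosses the cut locus of $\alpha$ (e.g.\ behind a positive cone point), and then $d(w,\alpha)\le x$ says nothing about $d(z,\alpha)$. The paper asserts containment only for the flattened disc, where it is essentially a planar fact.

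\textbf{How the paper closes it.} Its surgery runs in the direction that can only overestimate area. At each positive-curvature vertex $a$ of $D_i$ it cuts along a shortest geodesic from $a$ to $\gamma_i$ and glues in an isosceles triangle whose apex angle equals the defect; area grows, and distances from $\gamma_i$ to $\alpha_i$ are unchanged. At each negative-curvature vertex, including vertices of $\alpha$, it excises a wedge of angle $\mu_a$ bisected by the shortest geodesic to $\gamma_i$. The wedge has radius $\le1.09x$, hence area $\le0.6\mu_ax^2$, which is added back. The flat result lies in the $x$-neighbourhood of a segment of length $\le p$.

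\textbf{Finishing your route.} Your coarea idea is a genuinely different route and can be finished without surgery. Use the intrinsic metric of $D_i$ and $f=d(\cdot,\alpha_i)$, so that $\mathrm{Area}(D_i)=\int_0^x\mathcal{H}^1(\{f=t\})\,dt$. Then:
\begin{itemize}
\item Shortest paths to $\alpha_i$ avoid positive cone points, so these add no level-set length.
\item Points whose shortest path leaves a vertex $a$ with angle excess along its final straight segment lie, at level $t$, on an arc of length at most (excess at $a$)$\cdot(t-f(a))$. Here $a$ is a negative cone point, possibly on $\alpha_i$, or a reflex vertex of $\gamma_i$, and these excesses total at most $1+2\mu$.
\item All remaining points lie on straight perpendiculars from distinct points of $\alpha_i$ and contribute at most $|\alpha_i|$.
\item Gauss--Bonnet on $D_i$ gives $\angle A_i+\angle B_i\le1+2\mu<\pi/2$, so no point has its foot at $A_i$ or $B_i$.
\end{itemize}
This gives $C\le\tfrac{1+2\mu}{2}$, which is sharper than the paper's bound.

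Containment follows from a maximum principle. Apply Gauss--Bonnet to the region cut off by two consecutive shortest paths from an interior point $z$ to $\alpha_i$. It shows they meet at an angle at most the enclosed positive curvature. So all such paths leave $z$ within an arc of size $\le\mu<\pi$, and $z$ cannot be a local maximum of $f$.
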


\begin{proof}




\newcommand{\dist}{\text{dist}}
\newcommand{\area}{\text{Area}}




Note that $D$ consists of one or several domains $D_i$ homeomorphic to the $2$-disk
each bounded by an arc $\alpha_i$ of $\alpha$ and an arc $\gamma_i$ of $\gamma$.
The boundary of $D_i$
is the union of two arcs, $\gamma$ and $\alpha$, meeting only at their common endpoints $A_i$ and $B_i$. Moreover, $\alpha_i$ is the minimizing geodesic between $A_i$ and $B_i$ in $D_i$. 
Applying
Lemma 9 to $D_i$, we see that the distances from all points of $\gamma_i$ to
$\alpha_i$ do not exceed $x=\max\{\sqrt{{3\over 2}pc+{p^2\mu^2\over 2}}, \mu p\}$.



We are going to modify $D=\cup_i D_i$
so that all singular inner vertices of $\Sigma$ in the modified $D$ will have negative curvature. For each $i=1,2,\ldots$ we are going to proceed as follows. First, order all an inner vertices $a$ of $\Sigma$ in $D_i$ of positive curvature in the increasing order with respect to the distance from $a$ to $\gamma_i$. Then, proceeding in the order just established, connect each $a$ with the nearest point $y$ on $\gamma_i$ by a geodesic segment $g$, cut along $g$, and glue in an isosceles triangle with vertex $a$, the angle at $a$ equal to the angular defect of $a$, and the side length equal to the length of $g$. The point $a$ will now become flat, the area will become only larger, and the maximum distance from a point of $\gamma_i^{\rm new}$ (with the insertions of the third side of the inserted triangle) to $\alpha_i$ 
will remain the same. So, this distance still will not exceed $x=\max\{\sqrt{{3\over 2}pc+{p^2\mu^2\over 2}}, \mu p\}$.

After performing this procedure we can assume that the angular defects at all
vertices of $\Sigma$ inside $D_i$ are non-negative. 

Now consider all inner vertices $a$ of $\Sigma$ in $D$ with negative curvatures. This set also includes the inner vertices of $\Sigma$ that are vertices of $\alpha$. For each $i$ we similarly order the set of these vertices $a$ in increasing order with respect to the distance from $a$ to $\gamma$ (starting from the nearest points to $\gamma$). Then, proceeding in this order, for each of them we perform the following operation.
Since the minimal geodesics
are unique, the minimal geodesics from points $z$ in $\alpha_i$ to $\gamma_i$ depend continuously on $z$, and one of them passes through $a$. Therefore, the distance from $a$ to $\gamma_i$ does not exceed $x$. Let $\mu_a$ denote the absolute value of the angular defect at $a$. Consider a shortest geodesic $g$ from $a$ to $\gamma_i$. 
Remove the triangle $A$ bounded by two geodesics incident to $a$ forming angle $\mu_a$ with each other, such that $g$ is the bisector of this angle. The third side of triangle $A$ is an arc
of $\gamma_i$. Note that the third side
of the triangle might be not a straight line segment but a broken line.
The distances from $a$ to
other points on the arc of $\gamma$ in
the boundary of $A$ (=the third side of the triangle) do not exceed $1.09x$, and, therefore, the area of the removed triangle does not exceed $1.2{\mu_ax^2\over 2}$. To see this, let $z$ be a point on the third side of the triangle. Consider the triangle with one side being the bisector $b$ of the angle of the triangle $A$ at $a$, and the other side being the straight line segment $az$. The angle between these sides in $A$ does not exceed half of the angle at $a$, that is ${\mu\over 2}\leq 0.05$ radian. The triangle is flat, and its
angle at the vertex $w$ of $b$ that is different from $a$ does not exceed ${\pi\over 2}+1$, where $1$ is the upper bound for the turning angle of $\gamma$, and ${\pi\over 2}$ is the value of the angle in the case, if 
$\gamma$ were a straight line (without
vertices) on an open arc including  $w$ and $z$. Now the law of sines implies the upper bound $1.09 x$ for $|az|$.

Glue points on one side of (already removed) $A$ incident to $a$ to the points on the other side incident to $a$ that are at the same distances to $a$. Of course, the areas of the triangles $A$ that we just removed will need to be later added to our upper bound for the area of the resulting flat surface. Our calculation implies that the total area of these triangles is
at most $1.2{\mu x^2\over 2}=0.6\mu x^2\leq\max\{0.9\mu pc+0.3\mu^2p^2, 0.6\mu^2p^2\}$.

Now we finally find ourselves in the situation when the resulting surface, $D$, is flat. Note that it is contained in the $x$-neighborhood of
$\alpha$. Therefore, the area of $D$ does not exceed the area of the $x$-neighborhood of $\alpha$ in the plane that
does not exceed $2x{\rm length}(\alpha)+\pi x^2$. 

If $x=\sqrt{{3\over 2}pc+{p^2\mu^2\over 2}}$,
then $2x{\rm length}(\alpha)+\pi x^2
\leq 2p\sqrt{{3\over 2}pc+{p^2\mu^2\over 2}}+\pi({3\over 2}pc+{p^2\mu^2\over 2})\leq \sqrt{6}p^{3\over 2}\sqrt{c}+\sqrt{2}\mu p^2+{3\over 2}\pi pc+{\pi\over 2}\mu^2p^2$. Now we add back $0.9\mu pc+0.3\mu^2p^2$.
Taking into account that $\mu\leq 0.1$ and, therefore $\mu^2\leq 0.1\mu$, and also that $c<p$ (and, therefore, $pc<p^{3\over 2}\sqrt{c}$), we can replace the resulting upper bound by the following simpler looking upper bound: ${\rm Area}(D)\leq 7.26p^{3\over 2}\sqrt{c}+1.61\mu p^2$.

If $x=p\mu$, then $2x{\rm length}(\alpha)+\pi x^2\leq 2p^2\mu+\pi\mu^2 p^2$. Adding back $0.6\mu^2p^2$, and using the inequality $\mu^2\leq 0.1\mu$ we see that ${\rm Area}(D)\leq 2.38\mu p^2$.
\end{proof} 

%




\section{Geodesic nets on surfaces}

Assume that a geodesic net in a small convex metric ball in $M$ was constructed from a closed geodesic net in $M$ by applying the coarea formula as we sketched in section 2.2. Consider
the dual complex of this geodesic net constructed as explained in section 3.3. This dual complex is a polyhedral surface.
Then we proceed as in section 4 and find a subset $D$ of this
polyhedral surface that is also a polyhedral surface homeomorphic to a disk, where all angles are ${1\over p}$-close to $0$ or to $\pi$, exactly two angles at each inner vertex are close to $\pi$, and exactly one
angle at each boundary vertex is close to $\pi$. Moreover, each face is a convex polygon with an even number of sides with exactly
two angles being acute, and the distance between the vertices with acute angles along the boundary of the polygon is equal to the half of the perimeter of the face. We assume that the boundary of $D$ consists of two broken lines of the same length connecting two vertices $A$ and $B$; $D$ is a subset of a larger polyhedral surface $D'$ that has the same properties as $D$; all vertices on the boundary of $D$ other than $A$ and $B$ are inner vertices of $D'$ (Figure~\ref{fig:D-in-Dprime}). Finally, we assume that at each vertex $v\in\partial D\setminus\{A, B\}$ we have a stronger condition. The angle of $D$ at $v$ is $\delta$-close to $\pi$; all angles of faces in $D$ and $D'\setminus D$ adjacent to $v$ are $\delta$-close to $0$ or $\pi$, and there is exactly one face adjacent to $v$
on each side of the boundary of $D$ such that the angle is $\delta$-close to $\pi$. It is very important here that $\delta$ can be an uncontrollably small positive number, much smaller than ${1\over p}$. The purpose of Lemma 10
is to first give an upper bound for the total absolute
curvature $\mu$ of $D$ that depends linearly on $\delta$, and then another linear upper bound for $\mu$ in terms of $\sqrt{c}$, where $c$ is the difference between the integer length of either of two halves of the boundary of $D$ connecting $A$ and $B$ and the distance between 
$A$ and $B$ in $D'$. Finally, one obtains an upper bound for the area of $D$ by plugging this upper bound for $\mu$ in terms of $c$ to Corollary 9.

\begin{figure}[ht]
\centering
\resizebox{\ifdim\width>\textwidth \textwidth\else\width\fi}{!}{%
\begin{tikzpicture}[
   >={Stealth[length=2.2mm]},
   pth/.style={line width=0.9pt, blue!70!black},
   bd/.style={line width=1.3pt, blue!75!black},
   bdp/.style={line width=1.3pt, blue!45!black},
   edot/.style={circle, fill=black, inner sep=1.7pt},
   scale=1.0, yscale=1.55]
\fill[blue!6] (0.000,0.000) -- (0.500,0.158) -- (1.000,0.095) -- (1.500,0.279) -- (2.000,0.150) -- (2.500,0.290) -- (3.000,0.150) -- (3.500,0.290) -- (4.000,0.150) -- (4.500,0.290) -- (5.000,0.150) -- (5.500,0.290) -- (6.000,0.150) -- (6.500,0.290) -- (7.000,0.150) -- (7.500,0.279) -- (8.000,0.095) -- (8.500,0.158) -- (9.000,0.000) -- (9.000,0.000) -- (8.500,0.638) -- (8.000,0.995) -- (7.500,1.419) -- (7.000,1.350) -- (6.500,1.490) -- (6.000,1.350) -- (5.500,1.490) -- (5.000,1.350) -- (4.500,1.490) -- (4.000,1.350) -- (3.500,1.490) -- (3.000,1.350) -- (2.500,1.490) -- (2.000,1.350) -- (1.500,1.419) -- (1.000,0.995) -- (0.500,0.638) -- (0.000,0.000) -- cycle;
\fill[blue!16] (0.000,0.000) -- (0.500,0.278) -- (1.000,0.320) -- (1.500,0.564) -- (2.000,0.450) -- (2.500,0.590) -- (3.000,0.450) -- (3.500,0.590) -- (4.000,0.450) -- (4.500,0.590) -- (5.000,0.450) -- (5.500,0.590) -- (6.000,0.450) -- (6.500,0.590) -- (7.000,0.450) -- (7.500,0.564) -- (8.000,0.320) -- (8.500,0.278) -- (9.000,0.000) -- (9.000,0.000) -- (8.500,0.518) -- (8.000,0.770) -- (7.500,1.134) -- (7.000,1.050) -- (6.500,1.190) -- (6.000,1.050) -- (5.500,1.190) -- (5.000,1.050) -- (4.500,1.190) -- (4.000,1.050) -- (3.500,1.190) -- (3.000,1.050) -- (2.500,1.190) -- (2.000,1.050) -- (1.500,1.134) -- (1.000,0.770) -- (0.500,0.518) -- (0.000,0.000) -- cycle;
\draw[bdp] (0.000,0.000) -- (0.500,0.158) -- (1.000,0.095) -- (1.500,0.279) -- (2.000,0.150) -- (2.500,0.290) -- (3.000,0.150) -- (3.500,0.290) -- (4.000,0.150) -- (4.500,0.290) -- (5.000,0.150) -- (5.500,0.290) -- (6.000,0.150) -- (6.500,0.290) -- (7.000,0.150) -- (7.500,0.279) -- (8.000,0.095) -- (8.500,0.158) -- (9.000,0.000);
\draw[bd] (0.000,0.000) -- (0.500,0.278) -- (1.000,0.320) -- (1.500,0.564) -- (2.000,0.450) -- (2.500,0.590) -- (3.000,0.450) -- (3.500,0.590) -- (4.000,0.450) -- (4.500,0.590) -- (5.000,0.450) -- (5.500,0.590) -- (6.000,0.450) -- (6.500,0.590) -- (7.000,0.450) -- (7.500,0.564) -- (8.000,0.320) -- (8.500,0.278) -- (9.000,0.000);
\draw[pth] (0.000,0.000) -- (0.500,0.398) -- (1.000,0.545) -- (1.500,0.849) -- (2.000,0.750) -- (2.500,0.890) -- (3.000,0.750) -- (3.500,0.890) -- (4.000,0.750) -- (4.500,0.890) -- (5.000,0.750) -- (5.500,0.890) -- (6.000,0.750) -- (6.500,0.890) -- (7.000,0.750) -- (7.500,0.849) -- (8.000,0.545) -- (8.500,0.398) -- (9.000,0.000);
\draw[bd] (0.000,0.000) -- (0.500,0.518) -- (1.000,0.770) -- (1.500,1.134) -- (2.000,1.050) -- (2.500,1.190) -- (3.000,1.050) -- (3.500,1.190) -- (4.000,1.050) -- (4.500,1.190) -- (5.000,1.050) -- (5.500,1.190) -- (6.000,1.050) -- (6.500,1.190) -- (7.000,1.050) -- (7.500,1.134) -- (8.000,0.770) -- (8.500,0.518) -- (9.000,0.000);
\draw[bdp] (0.000,0.000) -- (0.500,0.638) -- (1.000,0.995) -- (1.500,1.419) -- (2.000,1.350) -- (2.500,1.490) -- (3.000,1.350) -- (3.500,1.490) -- (4.000,1.350) -- (4.500,1.490) -- (5.000,1.350) -- (5.500,1.490) -- (6.000,1.350) -- (6.500,1.490) -- (7.000,1.350) -- (7.500,1.419) -- (8.000,0.995) -- (8.500,0.638) -- (9.000,0.000);
\node[edot] at (0,0) {};
\node[edot] at (9.000,0) {};
\node[left]  at (0,0) {$A$};
\node[right] at (9.000,0) {$B$};
\node[blue!45!black,font=\footnotesize,above,fill=white,inner sep=0.5pt] at (4.500,1.560) {$\gamma_n$};
\node[blue!75!black,font=\footnotesize,fill=white,inner sep=0.5pt] at (2.500,1.335) {$\gamma_{n-1}$};
\node[blue!75!black,font=\footnotesize,fill=white,inner sep=0.5pt] at (6.500,0.470) {$\gamma_2$};
\node[blue!45!black,font=\footnotesize,fill=white,inner sep=0.5pt] at (4.500,0.150) {$\gamma_1$};
\node[font=\footnotesize,fill=white,inner sep=0.5pt] at (6.000,0.900) {$D$};
\node[font=\footnotesize,fill=white,inner sep=0.5pt] at (7.250,1.220) {$D'$};
\end{tikzpicture}
}
\caption{The nested thin surfaces $D\subset D'$, bounded by $\gamma_2,\gamma_{n-1}$ and by $\gamma_1,\gamma_n$; all edges are unit segments and only the vertical scale is exaggerated.}
\label{fig:D-in-Dprime}
\end{figure}

\begin{lemma}
Assume that the polyhedral surface $\Sigma$
with all sides of length $1$ is obtained as above starting from a closed Riemannian surface $M$ with $\vert K\vert\leq 1$ and a subnet $T$ of a closed geodesic net $Net$ of length $\leq l$ in $M$ that is contained in a metric ball $B_a(r_*)$ in $M$ of radius $r_*$ centered at a point $a\in M$ so that $T$ has at most $p={2 l\over r}$ unbalanced points, and all these points are on $\partial B_a(r_*)$.
Here $r_*\in ({r\over 2}, r)$, where $r={1\over 2000}\min\{1, conv(M), {1\over l}\}$, and $conv(M)$ denotes the convexity radius of $M$.


Consider a polyhedral subset $D$ of $\Sigma$ that consists of closed faces, edges, and vertices of $\Sigma$ and is homeomorphic to the $2$-disc. Assume that the boundary of $D$ consists
of two vertices $A$ and $B$ connected by two vertex-disjoint edge paths $P_1$ and $P_2$ with $k\leq p$ edges of length $1$ each. 

Assume that the angles
$\alpha_i$ in all faces of $\Sigma$ at all boundary vertices of $D$ other than $A$ or $B$ are $\delta$-close to either $0$ or $\pi$ , $(i=1,\ldots , k)$, and exactly one of these angles in $D$ as well as exactly one of the faces outside of $D$ are $\delta$-close to $\pi$  for each of the considered vertices. Further, assume that for each vertex $v$ of $\partial D$ other than $A$ and $B$ the angle $\beta_v$ between edges of the boundary of $D$ incident to $v$ is $\delta$-close to $\pi$. (This angle is equal to the sum
of all angles in the faces of $D$ that are incident to $v$.)
Assume that $\delta\leq {1\over 8p}$.
Finally, assume that the angles at $A$ and $B$ are acute.

Let $\mu$ denote the total
absolute angular defect (that is, the total absolute curvature) of the domain $D$. By definition, it is the sum over all inner vertices $v$ of $D$ of the absolute values of the difference of the angle at $v$ and $2\pi$.

(1) Then $\mu$ satisfies
the following inequality:
$\mu\leq  960r_*^2p^2\delta < 1000r^2p^2\delta=4000l^2\delta$.



Assume, in addition, that all faces of the polyhedral surface are even-sided polygons with two opposite
vertices with acute angles $\leq {1\over p}$ connected with two arcs of the same integer length $\leq k$. Assume that all but two angles in each face are in the interval $[\pi-{1\over p},\pi]$. Finally, assume that
for each interior vertex exactly two angles incident to this vertex are ${1\over p}$-close to $\pi$. 
Then:

(2) For each pair of vertices $v_1, v_2$
of $D$ and each pair of paths $p_1, p_2$ in the $1$-skeleton of $D$ from $v_1$ to $v_2$, the lengths of $p_1$ and $p_2$ are equal, providing that the distance to $v_2$ monotonously decreases along both $p_1$ and $p_2$.

(3) Let $D'\supset D$ be a polyhedral subset of $\Sigma$ satisfying exactly the same conditions as $D$ for the same vertices $A$, $B$ and, in addition, containing all faces of $\Sigma$ incident to all vertices $v\in\partial D\setminus\{A, B\}$ outside of $D$. 
Assume that $c$ denotes the difference between
$p$ and the length of a minimal geodesic between $A$ and $B$ in $D'$. Then $2\sqrt{2}\sqrt{c}\geq\delta$.
This estimate does not require the upper bound  for $\delta\leq {1\over 8p}$ in the text of the theorem.


(4) Let $\tilde l=\max\left\{l, 1, {1\over {\rm conv}(M)}\right\}$.
Then the area of  $D$ is less than 
$(100\tilde l)^6 c^{1\over 2}$.


\end{lemma}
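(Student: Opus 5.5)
The plan is to read everything off the geodesic net through the duality of section 3.3. A face $P_v$ of $\Sigma$ is dual to a balanced vertex $v$ of $T$, an edge of $\Sigma$ to an edge of $T$ (rotated by $\pi/2$), and an inner vertex $w_F$ of $\Sigma$ to a complementary region $F$ of $T$ in $B_a(r_*)$. The angular defect at $w_F$ equals $\int_F K$, so $|\mathrm{defect}(w_F)|\le \mathrm{Area}(F)$.

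\textbf{Part (1).} $\mu$ is bounded by the total area of the regions $F$ dual to inner vertices of $D$.
\begin{itemize}
\item The faces of $D$ are dual to a set $V_D$ of vertices of $T$. The two boundary paths $P_1,P_2$ are dual to two chains of edges of $T$.
\item The hypothesis that all angles at boundary vertices of $D$ are $\delta$-close to $0$ or $\pi$, with $\beta_v$ $\delta$-close to $\pi$, translates to: at each net vertex along these chains, the outgoing edges are $\delta$-almost parallel to one direction.
\item Hence each such chain is a broken geodesic in $M$ with turning angles $\le\delta$ (cumulatively $\le k\delta\le p\delta$). It has at most $k\le p$ segments, each of length $<2r_*$ since $B_a(r_*)$ is convex.
\item The regions $F$ dual to inner vertices of $D$ lie in the strip of $B_a(r_*)$ between these two almost-geodesic chains. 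Through a vertex of $D$ these chains are joined by net geodesics that are themselves $\delta$-almost parallel to them.
\end{itemize}
I would bound the width of this strip by $O(r_* p\delta)$, integrating the turning angles along a length $\le 2r_*$ and correcting for curvature using $|K|\le1$ and $r_*\le 1/2000$. The length is $O(r_*)$, and the number of chain segments is $\le p$. Multiplying gives an area, hence a bound on $\mu$, of order $C r_*^2 p^2\delta$ with $C\le 960$. The last equality follows from $p=2l/r$ and $r_*<r$.

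I expect this geometric step to be the main obstacle. It requires showing that two broken geodesics with small turning angles, starting near the same point, stay close. The projection lemma of section 7 handles this: the orthogonal projection of a broken geodesic onto a geodesic shortens length by only a controlled amount.

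\textbf{Part (2).} This is combinatorial. Inside each face, the two arcs between the acute vertices have equal length. A monotone edge path between $v_1,v_2$ can be deformed into another one by face flips, each replacing one arc of a face by the other. Monotonicity of the distance to $v_2$ ensures that only such flips are needed, since no face is entered from an acute side and left backward. The proof would go by induction on the number of faces enclosed between $p_1$ and $p_2$.

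\textbf{Part (3).} Choose the boundary vertex $v$ of $D$ where $\beta_v$ deviates from $\pi$ most, by at least $\delta$; otherwise reduce $\delta$ to that maximum.
\begin{itemize}
\item In $D'$, the faces outside $D$ adjacent to $v$ let us form a path from $A$ to $B$ in $D'$ of length $k$. By (2), $k$ is the common length of all monotone paths.
\item This path has a turning angle $\ge\delta$ at $v$, or at a neighbouring vertex through the adjacent thin face.
\item Apply Corollary 8, or rather its contrapositive in the direction that a definite turn forces excess length. With the curvature of $D'$ small by part (1), and $k\le p$, a turn of $\delta$ at one vertex forces $c\ge\delta^2/8$, giving $2\sqrt2\sqrt c\ge\delta$.
\end{itemize}
Concretely, I would cut at $v$ and compare the path with a straightened version using elementary trigonometry, as in Lemma 7, with the factor $8$ absorbing $p\le$ path length and the curvature term.

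\textbf{Part (4).} Combine (1) and (3) to get $\mu\le 4000l^2\cdot 2\sqrt2\sqrt c$, then plug this into Corollary 9 with $\mu$ of this size. Both terms $7.26p^{3/2}\sqrt c$ and $\mu p^2$ are then $\le C\,l^2p^2\sqrt c$. Finally substitute $p=2l/r\le 4000\,\tilde l^2$ and check that the constants fit under $(100\tilde l)^6$. If $\mu\ge0.1$, so that Corollary 9 does not apply, then $\sqrt c$ is bounded below by roughly $1/l^2$. In that case the trivial area bound $p^2/(4(\pi-\mu/2))$ from Remark 2 already lies below $(100\tilde l)^6\sqrt c$.
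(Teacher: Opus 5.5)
\textbf{Part (1) has a genuine gap.} The edges $e_1,\dots,e_k$ of $T$ dual to $P_1$ do not form a chain. Each pair $e_i,e_{i+1}$ lies on $\partial F_v$, where $v$ is the common vertex of $e_i^*,e_{i+1}^*$. There they are separated by an arc $\alpha_v$ containing an angle $\le\delta$ of $F_v$.

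\begin{itemize}
\item \emph{Wrong geometric picture.} Nothing dual to $P_1$ or $P_2$ is a broken geodesic with turning $\le\delta$, and $O=\bigcup_{v\in{\rm int}\,D}F_v$ is not a strip between two such curves. Its boundary consists of $l_A$ and $l_B$ (dual to the edges of $D$ at $A$ and $B$) and of the zig-zag arcs $\alpha_v$, which turn by almost $\pi$. What is small is the extent of $O$ \emph{transverse} to the common direction of the net edges.
\item \emph{How the paper gets the width.} It takes $\gamma$ through $a$ orthogonal to $E_1$ and bounds the width $W$ of the projection of $\partial O$ to $\gamma$. It sums Lemma 11 over the $O(p)$ broken-geodesic pieces of $\partial O$, all almost perpendicular to $\gamma$. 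Lemma 11 says such projections are \emph{short}, $P\le 20r_*(x+\phi)$; it does not say projection nearly preserves length, as you describe it. The initial deviations accumulate to at most $p\delta+{\rm Area}(S)$ over the $k$ faces, so $W\le 160r_*p(p\delta+{\rm Area}(S))$. Then ${\rm Area}(O)\le {\rm Area}(S)\le 3Wr_*$. Your extra factor of $p$ (number of segments) multiplying width times length has no justification.
\item \emph{Curvature is not handled.} The Gauss--Bonnet corrections to these angles are total curvatures. If you bound them by a fixed quantity such as ${\rm Area}(B_a(r_*))\approx\pi r_*^2$, you only get $\mu\lesssim r_*^2p^2\delta+r_*^4p$. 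The second term does not vanish with $\delta$. That breaks (4) and the use of (1) and (3) in Proposition 2, where $\mu$ must tend to $0$ with $c$. The paper bounds the corrections by ${\rm Area}(S)$ of the very strip being estimated. It then absorbs them, using $480r_*^2p<\frac12$, to get ${\rm Area}(S)\le 960r_*^2p^2\delta$.
\end{itemize}

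\textbf{Parts (2), (3), (4).} Parts (2) and (4) are essentially as in the paper. In (4), Corollary 9 is applied in $D'$ to each of $P_1,P_2$, which gives a factor $2$. Your case $\mu\ge 0.1$ is vacuous, since $\mu\le{\rm Area}(B_a(r_*))$.

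In (3), Corollary 8 is of no help. It bounds the turning from below by $c$, and taking its contrapositive does not reverse that direction. Appealing to (1) would also need $\delta\le\frac1{8p}$, which (3) is not supposed to use. What works is the local straightening you mention, but carried out inside a single convex, hence flat, face $f$ at $v$, lying in $D$ or in $D'\setminus D$ (this is why $D'$ is introduced):
\begin{itemize}
\item If $f$ has angle $\pi-\delta$ at $v$, reroute the path along the two edges of $f$ at $v$; its length is unchanged by (2). Cutting the corner by the diagonal then gives $c\ge 2(1-\cos\frac\delta2)$.
\item If $f$ has angle $\delta$ at $v$, shortcut from $v$ to the opposite vertex of $f$, gaining at least $1-\cos\frac\delta2$.
\end{itemize}
Also, $\delta$ must dominate every face angle at the boundary vertices, not just $|\beta_v-\pi|$. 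So you cannot ``reduce $\delta$ to the maximal deviation of $\beta_v$''; the cases where $\delta$ is attained by a face angle are exactly the ones that must be treated.
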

\begin{proof}
(1) The absolute value of the curvature of the original Riemannian surface $M$ is bounded by $1$. By construction, each vertex $v$ in the interior of the polyhedral surface $D$ corresponds to a face $F_v$
of the original surface, and the angular defect is the integral of the curvature over $F_v\subset M$. Therefore, its absolute value
does not exceed the area of $F_v$. The total absolute curvature of $D$ does not exceed the total area of all faces of the original geodesic net in $M$ corresponding to all the vertices inside $D$. Denote the union of all faces $F_v$ , $v\in\ int(D)$, by $O$. We have
$\mu\leq Area(O)$. Our goal is to find an upper bound for $Area(O)$.

Recall, that the original net is contained in
the ball $B_a(r_*)$ centered at a point $a\in M$ of radius $r_*\in ({r\over2}, r)$, for $r={1\over 2000}\min\{1, conv(M), {1\over l}\}$.

Observe that, as $pr=2l$, for
 $l\geq \max\{1, {1\over conv(M)}\}$, $r={1\over 2000l}$, and $p=4000l^2$.

Our strategy would be to choose certain geodesic $\gamma$, and consider Fermi coordinates (a.k.a. geodesic normal coordinates) adapted to $\gamma$.
We going to prove that
$O$ will be contained in the strip $S$ between
two geodesics orthogonal to $\gamma$ and passing through points on $\gamma$ at the distance $W\leq Const_1\ p^2r_*\delta+Const_2pr_* Q$ from each other, where a constant $Q$ is known to be greater than or equal to the area of the set of all points
in $B_a(r_*)$ with orthogonal projections to $\gamma$ in the same interval of length $W$. (The constants $Const_1$ and $Const_2$ will be chosen later.)
Easy comparison argument implies that the area of the strip in $B_a(r_*)$ with the projection on $\gamma$ in the interval of length $W$ is the maximal possible when the disk $B_a(r_*)$ is hyperbolic, $\gamma$ passes through its center, and the center of the disk is also the center of the interval of length $W$ on $\gamma$. For $r_*\leq 0.5$ the area of the strip does not exceed $2W\sinh(r_*)<3Wr_*=
3Const_1\ p^2\delta r_*^2+3Const_2r_*^2p Q$.
As long as $r_*^2\leq {1\over 6p\ Const_2}$, and
$Q=6Const_1 p^2\delta r_*^2$, the area of the strip $S$ does not exceed $Q$, and the assumption in the definition of $W$ is satisfied. So, we obtain the desired upper bound 
$6Const_1 p^2\delta r_*^2$ 
for the area of $S$ and, therefore, for the area of $O$. As we already observed, the same expression will automatically be an upper bound for the total absolute angular defect $\mu$ of $D$.

Consider the boundary of $D$ and its ``preimage" in $M$.
The boundary of $D$ has two vertices $A$ and $B$ with acute angles ($\leq \delta$). There could be many edges adjacent to $A$ (or $B$). These vertices
and edges inside $D$ correspond to broken geodesic lines $l_A$ and $l_B$ in $B_a(r_*)\subset M$. The sum of the outer
angles at all vertices of $l_A$ (correspondingly, $l_B$)
is equal to the angle $\alpha$ at $A$ (correspondingly,
the angle $\beta$ at $B$). The sum of $\alpha$ and $\beta$
does not exceed the sum of $\mu$ and all the outer angles at vertices of $\partial D$ other than $A$ and $B$. That is,
the sum of these angles at does not exceed $2(p-1)\delta+\mu$.
These lines are parts of the boundary of $O$ and are concave to the domain $O\subset M$. 

Points $A$ and $B$ are connected in $\partial D$ by two broken lines. The angles at all vertices of each of these lines are $\delta$-close to $\pi$. Each of these vertices $v$ corresponds to a geodesic polygon in $M$ (that we previously called $F_v$). However, these polygons will not be in $O$ as we consider now the vertices on the boundary of $D$. Yet,
each of them will be adjacent to $O$ and will share a part of its boundary with $\partial O$. This part is a connected arc that we will call $\alpha_v$. In fact, the union of these two arcs $\alpha_v$ will contain $\partial O\setminus (l_A\cup l_B)$. Each of these arcs shares exactly one segment with $l_A$, namely the first one, and exactly one with $l_B$, namely the last one. The angles $\alpha_{vi}$ between consecutive edges in $D$ incident to $v$ will be $\delta$-close to either $0$ or $\pi$. Exactly one of these angles, $\alpha_{vi_0}$, will be close to $\pi$. The angles between the corresponding edges in the boundary
of $F_v$ will be $\pi-\alpha_{vi}$; exactly one of them will be $\leq \delta$, the rest of them will be close to $\pi$.  The sum of
these angles $\alpha_{vi}$ that are $\delta$-close to $0$ and the only angle $\pi-\alpha_{vi_0}$ that is $\delta$ close to $0$
will be less than $\delta$.

\begin{figure}[ht]
\centering
\resizebox{\ifdim\width>\textwidth \textwidth\else\width\fi}{!}{%
\begin{tikzpicture}[
   netl/.style={line width=0.7pt, black!80},
   env/.style={line width=1.6pt, blue!55!black},
   arc/.style={line width=1.6pt, red!65!black},
   arcb/.style={line width=1.1pt, red!45!black},
   othr/.style={line width=1.1pt, black!40},
   dual/.style={line width=0.8pt, black},
   dashd/.style={line width=0.8pt, black!40, dash pattern=on 3pt off 2pt},
   stub/.style={line width=0.6pt, black!60},
   angm/.style={line width=0.6pt, red!70!black},
   dot/.style={circle, fill=black, inner sep=1.0pt},
   odot/.style={circle, draw=black!60, fill=white, line width=0.4pt, inner sep=0.7pt},
   scale=1.0]

\begin{scope}
\clip (-3.120,-5.120) rectangle (3.008,4.240);

\fill[blue!17] (-0.733,-1.372) -- (-0.959,-0.258) -- (-0.936,-1.717) -- cycle;
\fill[blue!17] (-0.544,-2.298) -- (-0.733,-1.372) -- (-0.936,-1.717) -- (-0.914,-3.166) -- cycle;
\fill[blue!17] (-0.389,-3.063) -- (-0.544,-2.298) -- (-0.914,-3.166) -- (-0.889,-4.772) -- cycle;
\fill[blue!17] (-0.969,0.417) -- (-1.249,1.170) -- (-0.959,-0.258) -- cycle;
\fill[blue!17] (-0.470,-0.927) -- (-0.969,0.417) -- (-0.959,-0.258) -- (-0.733,-1.372) -- cycle;
\fill[blue!17] (-0.233,-1.566) -- (-0.470,-0.927) -- (-0.733,-1.372) -- (-0.544,-2.298) -- cycle;
\fill[blue!17] (-0.075,-1.991) -- (-0.233,-1.566) -- (-0.544,-2.298) -- (-0.389,-3.063) -- cycle;
\fill[blue!17] (-1.420,2.011) -- (-1.796,2.641) -- (-1.249,1.170) -- cycle;
\fill[blue!17] (-0.983,1.278) -- (-1.420,2.011) -- (-1.249,1.170) -- (-0.969,0.417) -- cycle;
\fill[blue!17] (-0.071,-0.252) -- (-0.983,1.278) -- (-0.969,0.417) -- (-0.470,-0.927) -- cycle;
\fill[blue!17] (0.161,-0.641) -- (-0.071,-0.252) -- (-0.470,-0.927) -- (-0.233,-1.566) -- cycle;
\fill[blue!17] (0.268,-0.820) -- (0.161,-0.641) -- (-0.233,-1.566) -- (-0.075,-1.991) -- cycle;
\fill[blue!17] (-0.071,-0.252) -- (0.161,-0.641) -- (0.671,0.557) -- (0.931,1.442) -- cycle;
\fill[blue!17] (0.161,-0.641) -- (0.268,-0.820) -- (0.671,0.557) -- cycle;
\fill[blue!17] (0.931,1.442) -- (0.671,0.557) -- (1.352,2.155) -- cycle;

\fill[red!8] (-2.390,4.240) -- (-2.749,4.240) -- (-1.796,2.641) -- cycle;
\fill[red!8] (-1.873,4.240) -- (-2.390,4.240) -- (-1.796,2.641) -- (-1.420,2.011) -- cycle;
\fill[red!8] (-1.028,4.240) -- (-1.873,4.240) -- (-1.420,2.011) -- (-0.983,1.278) -- cycle;
\fill[red!18] (1.750,4.240) -- (-1.028,4.240) -- (-0.983,1.278) -- (-0.071,-0.252) -- (0.931,1.442) -- cycle;
\fill[red!8] (2.240,4.240) -- (1.750,4.240) -- (0.931,1.442) -- (1.352,2.155) -- cycle;
\fill[red!8] (2.585,4.240) -- (2.240,4.240) -- (1.352,2.155) -- cycle;
\fill[black!6] (-2.948,-5.120) -- (-1.746,-5.120) -- (-0.914,-3.166) -- (-0.936,-1.717) -- cycle;
\fill[black!6] (-1.746,-5.120) -- (-0.991,-5.120) -- (-0.889,-4.772) -- (-0.914,-3.166) -- cycle;
\fill[black!6] (-0.991,-5.120) -- (-0.884,-5.120) -- (-0.889,-4.772) -- cycle;
\fill[black!6] (-0.884,-5.120) -- (0.030,-5.120) -- (-0.389,-3.063) -- (-0.889,-4.772) -- cycle;
\fill[black!6] (0.030,-5.120) -- (1.088,-5.120) -- (-0.075,-1.991) -- (-0.389,-3.063) -- cycle;
\fill[black!6] (1.088,-5.120) -- (2.833,-5.120) -- (0.268,-0.820) -- (-0.075,-1.991) -- cycle;

\draw[netl] (2.833,-5.120) -- (-2.749,4.240);
\draw[netl] (1.088,-5.120) -- (-2.390,4.240);
\draw[netl] (0.030,-5.120) -- (-1.873,4.240);
\draw[netl] (-0.884,-5.120) -- (-1.028,4.240);
\draw[netl] (-0.991,-5.120) -- (1.750,4.240);
\draw[netl] (-1.746,-5.120) -- (2.240,4.240);
\draw[netl] (-2.948,-5.120) -- (2.585,4.240);

\draw[env] (-1.796,2.641) -- (-1.249,1.170) -- (-0.959,-0.258) -- (-0.936,-1.717);
\draw[env] (0.268,-0.820) -- (0.671,0.557) -- (1.352,2.155);
\draw[othr] (-0.936,-1.717) -- (-0.914,-3.166);
\draw[arcb] (-1.420,2.011) -- (-1.796,2.641);
\draw[othr] (-0.914,-3.166) -- (-0.889,-4.772);
\draw[arcb] (-0.983,1.278) -- (-1.420,2.011);
\draw[arc] (0.931,1.442) -- (-0.071,-0.252) -- (-0.983,1.278);
\draw[othr] (-0.889,-4.772) -- (-0.389,-3.063);
\draw[arcb] (1.352,2.155) -- (0.931,1.442);
\draw[othr] (-0.389,-3.063) -- (-0.075,-1.991);
\draw[othr] (-0.075,-1.991) -- (0.268,-0.820);

\draw[angm] (-0.071,-0.252) ++(59.4:0.55) arc (59.4:120.8:0.55);
\node[red!70!black,font=\footnotesize,above] at (-0.071,0.348) {$\leq\delta$};
\end{scope}

\node[font=\footnotesize] at (-2.749,4.460) {$e_{1}$};
\node[font=\footnotesize] at (-2.390,4.800) {$e_{2}$};
\node[font=\footnotesize] at (-1.873,4.460) {$e_{3}$};
\node[font=\footnotesize] at (-1.028,4.460) {$e_{4}$};
\node[font=\footnotesize] at (1.750,4.460) {$e_{5}$};
\node[font=\footnotesize] at (2.240,4.460) {$e_{6}$};
\node[font=\footnotesize] at (2.585,4.800) {$e_{7}$};
\node[font=\scriptsize] at (-2.570,5.160) {$F_{v_{1}}$};
\node[font=\scriptsize] at (-2.132,5.500) {$F_{v_{2}}$};
\node[font=\scriptsize] at (-1.451,5.160) {$F_{v_{3}}$};
\node[font=\scriptsize] at (0.361,5.160) {$F_{v_{4}}$};
\node[font=\scriptsize] at (1.995,5.160) {$F_{v_{5}}$};
\node[font=\scriptsize] at (2.412,5.500) {$F_{v_{6}}$};

\draw[line width=0.45pt,black!55] (0.950,0.595) -- (0.430,0.595);
\node[red!65!black,font=\footnotesize,right] at (1.040,0.595) {$\alpha_{v_{4}}$};
\draw[line width=0.45pt,black!55] (-1.785,1.906) -- (-1.523,1.906);
\node[blue!55!black,font=\footnotesize,left] at (-1.875,1.906) {$l_A$};
\draw[line width=0.45pt,black!55] (0.732,-0.131) -- (0.470,-0.131);
\node[blue!55!black,font=\footnotesize,right] at (0.822,-0.131) {$l_B$};
\draw[line width=0.45pt,black!55] (-2.149,-3.325) -- (-0.684,-3.325);
\node[black,font=\footnotesize,left] at (-2.239,-3.325) {$O$};
\node[black!45,font=\scriptsize] at (0,-5.360) {the other broken line};
\node[font=\footnotesize] at (0,-5.840) {(a)};

\draw[dashd] (4.678,-0.293) -- (5.866,-0.995) -- (7.136,-1.536) -- (8.460,-1.924) -- (9.840,-1.902) -- (11.192,-1.628) -- (12.486,-1.147) -- (13.671,-0.440);
\draw[dual] (4.678,-0.293) -- (5.863,0.414) -- (7.157,0.895) -- (8.509,1.170) -- (9.889,1.191) -- (11.213,0.803) -- (12.483,0.262) -- (13.671,-0.440);
\draw[stub] (4.678,-0.293) -- (6.058,-0.272);
\node[odot] at (6.058,-0.272) {};
\draw[stub] (4.678,-0.293) -- (6.030,-0.018);
\node[odot] at (6.030,-0.018) {};
\draw[stub] (4.678,-0.293) -- (5.972,0.188);
\node[odot] at (5.972,0.188) {};
\draw[stub] (7.157,0.895) -- (5.972,0.188);
\node[odot] at (5.972,0.188) {};
\draw[stub] (8.509,1.170) -- (7.324,0.463);
\node[odot] at (7.324,0.463) {};
\draw[stub] (9.889,1.191) -- (8.704,0.484);
\node[odot] at (8.704,0.484) {};
\draw[stub] (9.889,1.191) -- (11.077,0.489);
\node[odot] at (11.077,0.489) {};
\draw[stub] (11.213,0.803) -- (12.401,0.101);
\node[odot] at (12.401,0.101) {};
\draw[stub] (13.671,-0.440) -- (12.401,0.101);
\node[odot] at (12.401,0.101) {};
\draw[stub] (13.671,-0.440) -- (12.347,-0.052);
\node[odot] at (12.347,-0.052) {};
\node[dot] at (4.678,-0.293) {};
\node[dot] at (5.863,0.414) {};
\node[dot] at (7.157,0.895) {};
\node[dot] at (8.509,1.170) {};
\node[dot] at (9.889,1.191) {};
\node[dot] at (11.213,0.803) {};
\node[dot] at (12.483,0.262) {};
\node[dot] at (13.671,-0.440) {};
\draw[angm] (4.678,-0.293) ++(-30.6:0.72) arc (-30.6:30.8:0.72);
\node[red!70!black,font=\scriptsize,right] at (5.738,-0.293) {$\leq\delta$};
\draw[angm] (13.671,-0.440) ++(149.4:0.72) arc (149.4:210.8:0.72);
\node[red!70!black,font=\scriptsize,left] at (12.611,-0.440) {$\leq\delta$};
\draw[angm] (9.889,1.191) ++(-149.2:0.55) arc (-149.2:-30.6:0.55);
\node[red!70!black,font=\scriptsize,below] at (9.889,0.571) {$\alpha_{v_{4}i_0}$};

\node[font=\footnotesize,left] at (4.538,-0.293) {$A$};
\node[font=\footnotesize,right] at (13.811,-0.440) {$B$};
\node[font=\scriptsize,above] at (5.271,0.101) {$e_{1}^{*}$};
\node[font=\scriptsize,above] at (6.510,0.694) {$e_{2}^{*}$};
\node[font=\scriptsize,above] at (7.833,1.072) {$e_{3}^{*}$};
\node[font=\scriptsize,above] at (9.199,1.220) {$e_{4}^{*}$};
\node[font=\scriptsize,above] at (10.551,1.037) {$e_{5}^{*}$};
\node[font=\scriptsize,above] at (11.848,0.573) {$e_{6}^{*}$};
\node[font=\scriptsize,above] at (13.077,-0.049) {$e_{7}^{*}$};
\draw[line width=0.35pt,black!45] (5.863,0.544) -- (5.863,0.884);
\node[font=\scriptsize,above] at (5.863,0.844) {$v_{1}$};
\draw[line width=0.35pt,black!45] (7.157,1.025) -- (7.157,1.365);
\node[font=\scriptsize,above] at (7.157,1.325) {$v_{2}$};
\draw[line width=0.35pt,black!45] (8.509,1.300) -- (8.509,1.640);
\node[font=\scriptsize,above] at (8.509,1.600) {$v_{3}$};
\draw[line width=0.35pt,black!45] (9.889,1.321) -- (9.889,1.661);
\node[font=\scriptsize,above] at (9.889,1.621) {$v_{4}$};
\draw[line width=0.35pt,black!45] (11.213,0.933) -- (11.213,1.273);
\node[font=\scriptsize,above] at (11.213,1.233) {$v_{5}$};
\draw[line width=0.35pt,black!45] (12.483,0.392) -- (12.483,0.732);
\node[font=\scriptsize,above] at (12.483,0.692) {$v_{6}$};
\node[font=\footnotesize] at (9.174,-1.020) {$D$};
\node[black!45,font=\scriptsize,below] at (8.460,-2.024) {the other broken line};
\node[font=\footnotesize] at (9.174,-5.840) {(b)};
\end{tikzpicture}
}
\caption{The net along one of the two broken lines joining $A$ and $B$ in $\partial D$ (a), and the disc $D$ dual to it (b); everything belonging to the other broken line is grey, and angles are exaggerated.}
\label{fig:boundary-faces}
\end{figure}

Two edges incident to $v$ in $\partial D$
correspond to ``nearly parallel" geodesic edges in the boundary of $F_v$. Looking
at all edges $e_i^*$ in one of the broken lines
connecting $A$ and $B$ in $\partial D$
denote these geodesic edges by $e_1,\ldots, e_k$. Consider the extensions $E_1,\dots , E_k$ of $e_i$, $i=1,\ldots, k$, until the intersection (in both directions) with
boundary of $B_a(r_*)$. 

We choose the geodesic $\gamma$ as being orthogonal to $E_1$ and passing through the center $a$ (Figure~\ref{fig:strip}).

\begin{figure}[ht]
\centering
\resizebox{\ifdim\width>\textwidth \textwidth\else\width\fi}{!}{%
\begin{tikzpicture}[
   >={Stealth[length=2.2mm]},
   ball/.style={line width=1.0pt, blue!65!black},
   geo/.style={line width=1.1pt, black},
   ortho/.style={line width=0.9pt, red!75!black},
   netl/.style={line width=0.8pt, black!75},
   face/.style={fill=blue!11, draw=blue!45!black, line width=0.5pt},
   scale=1.0]

\begin{scope}
\clip (0,0) circle (2.9);
\fill[red!6] (-0.881,-3.1) rectangle (0.251,3.1);
\end{scope}
\fill[blue!11] (-0.212,-1.284) -- (-0.169,-0.641) -- (-0.228,0.252) -- (-0.321,-0.454) -- cycle;
\fill[blue!11] (-0.145,-1.791) -- (-0.145,-1.013) -- (-0.169,-0.641) -- (-0.212,-1.284) -- cycle;
\fill[blue!11] (-0.042,-2.569) -- (-0.145,-1.012) -- (-0.145,-1.791) -- cycle;
\fill[blue!11] (-0.169,-0.641) -- (-0.145,-0.269) -- (-0.145,0.884) -- (-0.228,0.252) -- cycle;
\fill[blue!11] (-0.145,-1.013) -- (-0.145,-0.269) -- (-0.169,-0.641) -- cycle;
\fill[blue!11] (-0.145,-0.269) -- (0.008,2.037) -- (-0.145,0.884) -- cycle;

\draw[ball] (0,0) circle (2.9);
\node[blue!65!black,font=\footnotesize] at (35:3.32) {$B_a(r_*)$};
\draw[ortho] (-0.881,-3.1) -- (-0.881,3.1);
\draw[ortho] (0.251,-3.1) -- (0.251,3.1);
\draw[geo] (-2.9,0) -- (2.9,0);
\node[font=\footnotesize,right] at (2.9,0) {$\gamma$};
\draw[netl] (-0.635,-2.830) -- (0.121,2.897);
\draw[netl] (-0.317,-2.883) -- (0.064,2.899);
\draw[netl] (-0.145,-2.896) -- (-0.145,2.896);
\draw[netl] (-0.020,-2.900) -- (-0.401,2.872);
\draw[netl] (0.002,-2.900) -- (-0.751,2.801);

\node[circle,fill=black,inner sep=1.4pt] at (0,0) {};
\node[font=\footnotesize,fill=white,inner sep=0.5pt] at (0.153,-0.26) {$a$};
\draw[line width=0.6pt,black] (-0.015,0) -- (-0.015,0.13) -- (-0.145,0.13);
\node[font=\footnotesize,below] at (-0.145,-2.956) {$E_1$};
\node[blue!45!black,font=\footnotesize,left] at (-0.381,-0.454) {$O$};
\node[red!75!black,font=\footnotesize] at (-0.315,3.060) {$S$};
\draw[<->, line width=0.7pt, red!75!black] (-0.881,-3.62) -- (0.251,-3.62);
\node[red!75!black,font=\footnotesize,below] at (-0.315,-3.60) {$W$};
\node[font=\footnotesize] at (0,-4.25) {(a)};

\fill[face] (8.062,0.388) -- (9.131,0.131) -- (10.224,0.000) -- (9.154,0.257) -- cycle;
\fill[face] (6.962,0.388) -- (8.031,0.131) -- (9.131,0.131) -- (8.062,0.388) -- cycle;
\fill[face] (5.870,0.257) -- (6.939,0.000) -- (8.031,0.131) -- (6.962,0.388) -- cycle;
\fill[face] (4.800,0.000) -- (5.870,-0.257) -- (6.939,0.000) -- (5.870,0.257) -- cycle;
\fill[face] (8.031,0.131) -- (9.124,0.000) -- (10.224,0.000) -- (9.131,0.131) -- cycle;
\fill[face] (6.939,0.000) -- (8.031,-0.131) -- (9.124,0.000) -- (8.031,0.131) -- cycle;
\fill[face] (5.870,-0.257) -- (6.962,-0.388) -- (8.031,-0.131) -- (6.939,0.000) -- cycle;
\fill[face] (8.031,-0.131) -- (9.131,-0.131) -- (10.224,0.000) -- (9.124,0.000) -- cycle;
\fill[face] (6.962,-0.388) -- (8.062,-0.388) -- (9.131,-0.131) -- (8.031,-0.131) -- cycle;
\fill[face] (8.062,-0.388) -- (9.154,-0.257) -- (10.224,0.000) -- (9.131,-0.131) -- cycle;
\draw[line width=1.0pt, blue!55!black] (4.800,0.000) -- (5.870,-0.257) -- (6.962,-0.388) -- (8.062,-0.388) -- (9.154,-0.257) -- (10.224,0.000);
\draw[line width=1.0pt, blue!55!black] (4.800,0.000) -- (5.870,0.257) -- (6.962,0.388) -- (8.062,0.388) -- (9.154,0.257) -- (10.224,0.000);
\node[circle,fill=black,inner sep=1.4pt] at (4.8, 0.0) {};
\node[circle,fill=black,inner sep=1.4pt] at (10.223581109028368, 0.0) {};
\node[font=\footnotesize,left] at (4.700,0) {$A$};
\node[font=\footnotesize,right] at (10.324,0) {$B$};
\node[blue!55!black,font=\footnotesize,above] at (6.962,0.488) {$P_1$};
\node[blue!55!black,font=\footnotesize,below] at (6.962,-0.488) {$P_2$};
\node[font=\footnotesize] at (5.870,0.000) {$D$};
\node[font=\footnotesize] at (7.512,-4.25) {(b)};
\end{tikzpicture}
}
\caption{The strip argument of Lemma 10: (a) the net in $B_a(r_*)$, together with the union $O$ of its faces, inside a strip $S$ of width $W$; (b) the dual disc $D\subset\Sigma$, a lune tiled by the narrow faces dual to the vertices of the net.}
\label{fig:strip}
\end{figure}

From now on we rely on the following lemma (Lemma 11) proven in the next section. Let $S$ be a reference geodesic and $L$ a piecewise geodesic curve with initial deviation $x$ and turning angle $\phi$, contained within a radius $r$ of $S$. For $r, x, \phi \le 0.5$, the length $P$ of the orthogonal projection of $L$ onto $S$ satisfies:
$P \le 20r(x + \phi)$. Here the initial deviation means the angle between the first segment and the geodesic perpendicular
to $S$ and passing through the initial vertex of (the first segment) of L.

We will apply it to $r=r_*$, $S=\gamma$ and to different curves $L$.
For each vertex $v$ the projection of the part of the boundary of $F_v$ in $\partial D$ can be regarded as the
projections of two broken geodesics: one
that starts from $e_i\subset E_i$ and goes
to the only vertex with an acute angle
in this arc of $F_v$, the second starts at this vertex, an connects
this vertex and $e_{j+1}$. We will need to
know initial deviations for all edges $e_i$.
The initial deviation of $e_1$ is zero,
and its change from $e_i$ to $e_{i+1}$ does not exceed $\delta+$ the area of the strip between geodesics orthogonal to $\gamma$ and passing through the ends 
of $e_i$ and $e_{i+1}$.
Thus, the initial deviation of $e_i$ for all $i$ does not exceed $(k-1)\delta+{\rm Area(S)}$. (The appearance of the area in our calculations of angles here and below
is, of course, due to an application of the Gauss-Bonnet theorem. As $|K|\leq 1$, we estimate above the total curvature in different domains first by the areas of these domains, and then by the area of $S$ that contains all these domains.)
We see that the projection of the relevant part of the boundary of the $i$th geodesic polygon $F_v$ to $\gamma$ will not exceed $20r_*(2p\delta+2{\rm Area}(S))$. The total projection of all $k\leq p$
polygons will not exceed $40r_*(p^2\delta +p{\rm Area}(S))$. The condition that initial deviations $x$ and turning angles $\phi$ do not exceed $0.5$ would follow from
$2({\rm Area}(S)+p\delta)\leq 0.5$ that can be replaced by (stronger) conditions $\delta\leq {1\over 8p}$ and $Area(S)\leq {1\over 8}$. The second condition will be automatically satisfied if $r_*\leq 0.1$.

Then we will need to project $l_A$ that has zero initial deviation and $l_B$ that has the same deviation as $e_k$. The turning angles of $l_A$ and $l_B$ are the angles at $A$ and $B$ in $D$. The sum of these angles can be majorized by the sum of outer angles at all
vertices of $\partial D$ other than $A$ and $B$ and the defect $\mu$. On the other hand, $\mu\leq {\rm Area}(O)\leq {\rm Area}(S)$.
Thus, $\phi$ for both $l_A$ and $l_B$
does not exceed $2p\delta+{\rm Area}(S))$. The initial deviation for $l_A$ is $0$, and for $l_B$ coincides
with the initial deviation for the last edge $e_k$.
The orthogonal projections of $l_A$ will not exceed $20r_*(2p\delta+{\rm Area}(S))$  and for $l_B$
will not exceed 
$20r_*(3p\delta+2{\rm Area}(S))$. It is plausible that the orthogonal
projection of the fourth arc in $\partial O$
will be covered by the projections of the remaining three. However, we prefer to simply add the possible contribution of the fourth side. It can
be calculated almost exactly as the contribution of
the first side (namely, the union of $k$ polygonal arcs between edges $e_i$ and $e_{i+1}$). The difference is 
in the first initial deviation (on the $A$ side),
and all the other initial deviations that are defined
inductively. Instead of $0$ (that was also the intial
deviation for the first edge of $l_A$) we need
the initial deviation for the last edge of $l_A$.
It does not exceed the sum of the turning angles of $l_A$ and the area of the domain in the disk that projects to the same interval of $\gamma$ as $l_A$. This domain is also in $S$. So, this initial deviation does not exceed 
$2p\delta+2{\rm Area}(S)$. Now proceeding as above, we
see that the projection of the fourth side will not exceed the sum of
$20r_*p(2p\delta+2{\rm Area}(S))$ plus the same
estimate that was made for the first side. This estimate was $40r_*(p^2\delta+p{\rm Area}(S)$, and our estimate for the projection of the fourth side will be
$80r_*p(p\delta+{\rm Area}(S))$.

Combining all these terms and rounding up
for simplification we obtain 
$W=160r_*p(p\delta+{\rm Area}(S))$. In other words,
$Const_1=Const_2=160$, and the estimate $Q=6Const_1r_*^2p^2\delta$ for $Area(S)$ becomes
$960r_*^2p^2\delta<960r^2p^2=3840l^2\delta$.



In order to prove (2) note that $D$ is tiled by elementary ``narrow" polygons. Each of them
has an even number of sides. A domain bounded  by two paths from $A$ to $B$ can be tiled by such polygons. We can remove them one by one without changing the number of sides in the boundary.

To prove (3) assume that $\delta$ is attained
at a vertex $v\in\partial D$. This means that 
one of the following is true: 
(a) one of the acute angles of a face $f$ in $D$ incident to $v$ in $D$ is $\delta$; (b)  there is a face $f$ in $D$ incident
to $v$ with the obtuse angle $\rho$ at $v$ equal to $\pi-\delta$; or (c), (d) the same as (a) (correspondingly, (b)) but for the face $f$ incident to $v$ but contained in the complement of $D$.

In the case (b) replace $D$ by its polyhedral subdomain $\tilde D$ with the same properties as the properties of $D$ stated in the lemma where the edges $d_1, d_2$ of $f$ forming the angle $\rho$ are now the part of the boundary of $\tilde D$. (To achieve this we will need to replace the polyhedral arc from $A$ to $B$ via $v$ by the new polyhedral arc formed by  shortest paths from $A$ to $v$ via $d_1$, and from $v$ to $B$ via $d_2$. Then we remove convex polygons of the partition of $D$ that are between
the old and the new boundaries.) 
Now we can shorten the path from $A$ to $B$ via $v$ by replacing $v_1v_2$ by the straight line segment connecting the endpoints of $v_1$
and $v_2$ by the diagonal in $f$. We will
shorten this path by at least $2(1-\cos{\delta\over 2})\geq {\delta^2\over 4}$.
So, in this case $c\geq {\delta^2\over 4}$.

In case (a), $\delta$ is the acute angle of a face $f$ in $D$. This face is an even-dimensional polygon. Denote the vertex opposite to $v$ in this polygon by $w$. We can shorten the path connecting $w$ with $v$
along sides of this polygon by the diagonal.
It is easy to see that the difference will
be at least $1-\cos {\delta\over 2}\geq {\delta^2\over 8}$. In this case, $c\geq {\delta^2\over 8}$.

The cases (c), (d) are similar to (a), (b) 
but we shorten $v_1v_2$ using the face $f$ in $D'\setminus D$.

(4) This inequality directly follows  from (3) and Corollary 9, and replacing the resulting upper bounds by larger but more nicely looking upper bounds. To apply
Corollary 9, note that the area of $D$ can be majorized
by the sum of areas of domains between a minimal geodesic connecting $A$ and $B$ in $D'$ and either of
the two broken lines connecting $A$ and $B$ in $\partial D$. This leads to an extra factor of $2$ in the formula.

{\bf Remark.} Assume that it is not known that $\delta\leq {1\over 8p}$. If $c\leq {1\over 512p^2}$, then (2) implies that $\delta\leq {1\over 8p}$, and the conclusions of (1) and, then (4) of the lemma are valid.
The upper bound for $c$ that guarantees $\delta\leq {1\over 8p}$ can be replaced by the upper bound $c\leq {1\over 10^{10}l^2\max\{l^2, 1, {1\over conv^2(M)}\}}$


\end{proof}

{\bf Proof of Theorem 2:}

Let $M$ be a closed surface with curvature between $-1$ and $1$. Consider a closed geodesic net of length $l$ in $M$.
Let $r={1\over 2000}\min\{1, conv(M), {1\over l}\}$,
where $conv(M)$ denotes the convexity radius of $M$.
Let $B_r(a)$ be a metric ball of radius $r$ centered at a point $a\in M$. Apply the coarea formula to find $r_*\in ({r\over 2}, r)$ such that the geodesic net
intersects the geodesic circle $C_{r^*}(a)$  of radius $r_*$ centered
at $a$ transversally and not at vertices, and at most $p={2l\over r}$ points
counted with the multiplicities equal to the weight of the corresponding edges intersecting $C_{r^*}(a)$ (Figure~\ref{fig:localization}).
Denote the intersection of the considered geodesic net 
with the metric ball $B_{r^*}(a)$ by $T_a$. This net is contained in $B_{r^*}(a)$, and is not closed. It has unbalanced points with imbalance vectors of integer length located on $C_{r^*}(a)$, and the total
imbalance of this net does not exceed $p$.

\begin{figure}[ht]
\centering
\resizebox{\ifdim\width>\textwidth \textwidth\else\width\fi}{!}{%
\begin{tikzpicture}[
   >={Stealth[length=2.2mm]},
   net/.style={line width=0.9pt, black},
   outnet/.style={line width=0.9pt, black!35},
   ball/.style={line width=1.0pt, blue!65!black},
   aux/.style={line width=0.7pt, dashed, gray!70},
   bdot/.style={circle, fill=black, inner sep=1.4pt},
   xdot/.style={circle, fill=red!80!black, inner sep=1.5pt},
   scale=1.0]

\def\rs{2.5}

\fill[blue!5] (0,0) circle (3.0);
\fill[white]  (0,0) circle (1.5);
\draw[aux] (0,0) circle (3.0);
\draw[aux] (0,0) circle (1.5);
\node[gray!70!black,font=\footnotesize] at (118:1.62) {$\tfrac r2$};
\node[gray!70!black,font=\footnotesize] at (118:3.12) {$r$};

\draw[ball] (0,0) circle (\rs);
\node[blue!65!black,font=\footnotesize] at (52:\rs+0.32) {$\partial B_a(r_*)$};

\node[bdot] at (0,0) {};
\node[below right=1pt] at (0,0) {$a$};

\draw[net] (200:3.6) -- (-0.9,0.35) -- (0.25,1.05) -- (0.95,0.15) -- (0.35,-1.0) -- (250:3.6);
\draw[outnet] (200:3.6) -- (205:4.3);
\draw[net] (0.25,1.05) -- (75:3.6);
\draw[outnet] (75:3.6) -- (72:4.3);
\draw[net] (0.95,0.15) -- (12:3.6);
\draw[outnet] (12:3.6) -- (8:4.3);
\draw[net] (-0.9,0.35) -- (140:3.6);
\draw[outnet] (140:3.6) -- (145:4.3);
\draw[net] (0.35,-1.0) -- (320:3.6);
\draw[outnet] (320:3.6) -- (325:4.3);

\foreach \p in {(-0.9,0.35),(0.25,1.05),(0.95,0.15),(0.35,-1.0)} \node[bdot] at \p {};

\node[xdot] at (-2.422,-0.619) {};
\node[xdot] at (-0.597,-2.428) {};
\node[xdot] at (0.634,2.418) {};
\node[xdot] at (2.450,0.499) {};
\node[xdot] at (-1.995,1.507) {};
\node[xdot] at (1.764,-1.772) {};

\end{tikzpicture}
}
\caption{Localizing a closed geodesic net: the geodesic circle $\partial B_a(r_*)$ crosses the net in at most $p$ points (red), yielding a subnet with all its imbalance on that circle.}
\label{fig:localization}
\end{figure}

Recall that $conv(M)$ denotes the radius of convexity of $M$.

Recall, that in section 2.3 we proved that the following proposition implies Theorem 2:

\begin{proposition} Let $c(M)=\max\left\{1,{1\over conv(M)}\right\}$. Then the geodesic net $T$ has at most $m$ branch points (e.g. vertices of degree $\geq 3$), for


$$m\leq (350c(M))^{(180c(M))^4},$$

if $l\leq c(M)$, and

$$m\leq (350l)^{(180l)^4},$$

if $l>c(M)$.

\end{proposition}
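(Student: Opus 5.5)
The plan is to mirror the proof of Theorem 3 (Section 5.1), with the flat estimates replaced by their polyhedral analogues and the uncontrolled curvature handled by Lemma 10. Set $\tilde l=\max\{l,c(M)\}$. Then $r=\frac1{2000\tilde l}$ and $p=\frac{2l}{r}\le 4000\tilde l^2$. By the construction of Section 3.3, the branch points of $T$ are in bijection with the convex cells of the dual polyhedral disc $\Sigma$. Its boundary has integer length $\le p$, and by Proposition 1 every cell has perimeter $\le p$. Each vertex of $\Sigma$ has angular defect equal to the integral of curvature over the corresponding face of $T$. Hence the total absolute curvature of $\Sigma$ is at most $\mathrm{Area}(B_a(r_*))\le 4r^2\le 10^{-6}$, which is well below the threshold $0.1$ used in Remark 2 of Section 4.

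The first step is to run Section 4 on $\Sigma$ with $\varepsilon=\frac1p$, which Remark 2 permits. Lemma 5 and Remark 1 then give two vertices $A,C$ joined by more than $R>0.15|E|^{1/p^2}$ vertex-disjoint $\varepsilon$-paths of equal integer length $\le p$ (Lemma 10(2) gives the equal length). If the number $m$ of cells is huge, then so is $|E|\ge m$. Order these paths $\gamma_1,\dots,\gamma_{\lceil R\rceil}$ across the strip they bound. Group consecutive paths into $K\approx 20000\,l^2\sqrt p$ blocks, each containing about $n'=\lfloor R/K\rfloor$ paths. The region $D_j$ between the outer paths of block $j$ sits inside a slightly larger $D_j'$ bounded by the neighbouring paths, so each $D_j$ satisfies the hypotheses of Lemma 10.

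The crucial step, which I expect to be the main obstacle, is controlling the curvature $\mu_j$ of some block against $c_j$, the excess of the common path length over $\mathrm{dist}_{D_j'}(A,C)$. Lemma 10(3) gives $\delta_j\le 2\sqrt2\sqrt{c_j}$. If $c_j\le\frac1{512p^2}$, the Remark after Lemma 10 makes (1) applicable, giving $\mu_j\le 4000l^2\delta_j\lesssim l^2\sqrt{c_j}$. That bound is much larger than the term $\sqrt{2c_j/p}$ in Corollary 8, which would make Corollary 8 useless. The way out is that the $\mu_j$ sum to at most the total curvature, which is tiny. After the grouping, pigeonhole gives a block with $\mu_j\le\sqrt{c_j/(2p)}$. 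Here I must be careful that the $c_j$ of different blocks are comparable; if not, I would choose the block that minimizes the ratio $\mu_j/\sqrt{c_j}$. For that block, Corollary 8 applied to each internal path gives a total turning of at least $\sqrt{c_j/(2p)}$. The disjoint triangles at the vertices of alternate paths then give $\mathrm{Area}(D_j)\gtrsim n'\sqrt{c_j}/\sqrt p$, as in Section 5.1. In the complementary case $c_j>\frac1{512p^2}$, the cells between alternate paths have area bounded below directly, and the upper bound $\mathrm{Area}\le p^2/12$ from Fiala--Huber caps $n'$ by a polynomial in $p$.

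The last step compares this lower bound with Lemma 10(4), which gives $\mathrm{Area}(D_j)<(100\tilde l)^6 c_j^{1/2}$. Cancelling $\sqrt{c_j}$ bounds $n'$, hence $R\lesssim K\sqrt p(100\tilde l)^6$, a polynomial of degree about $10$ in $\tilde l$. Plugging this into $|E|<(R/0.15)^{p^2}$ with $p\le 4000\tilde l^2$ gives $m\le(C\tilde l^{a})^{(4000\tilde l^2)^2}$. The remaining arithmetic is to absorb the constants into the stated form $(350\tilde l)^{(180\tilde l)^4}$, treating the cases $l\le c(M)$ and $l>c(M)$ through the definition of $\tilde l$. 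I expect the delicate point to be this bookkeeping of exponents (e.g.\ $4000^2\tilde l^4$ versus $180^4\tilde l^4$, where $180^4\approx 1.05\cdot10^9\ge 1.6\cdot 10^7$, so there is room to spare) together with justifying the block selection.
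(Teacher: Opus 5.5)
There is a genuine gap at the step you yourself flag as the main obstacle, in the regime $c\le\frac1{512p^2}$.

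\textbf{The small-$c$ case.} After applying Lemma 10 block by block, you get the good block from the fact that the $\mu_j$ sum to at most the total curvature of $\Sigma$, i.e.\ at most $4r^2$. That bound does not depend on $c$, and $c$ can be arbitrarily close to $0$. Pigeonhole therefore only gives $\min_j\mu_j\le 4r^2/K$, which is not $\le\sqrt{c_j/(2p)}$ once $c$ is small.

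The per-block estimates $\mu_j\le 4000l^2\cdot 2\sqrt2\sqrt{c_j}$ do not rescue this. Summed over the blocks they give $O(Kl^2\sqrt{\max_j c_j})$. Choosing the block that minimizes $\mu_j/\sqrt{c_j}$ only shows that this minimum is $O(l^2)$, not that it is at most $(2p)^{-1/2}$. What is missing is a bound on the \emph{sum} of the block curvatures that is proportional to $\sqrt c$ for one common $c$.

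The paper gets this by applying Lemma 10(1),(3) once, to the whole region $D$ between $\gamma_2$ and $\gamma_{n-1}$, inside $D'$ bounded by $\gamma_1,\gamma_n$. It uses the single $c$ shared by all $\gamma_i$, which have the same endpoints and the same length. This gives $\sum_j\mu_j\le\mu(D)<12000\,l^2\sqrt c$. Dividing by $K=\lceil 20000\,l^2\sqrt p\rceil$ yields a block with $\mu\le 0.6\sqrt{c/p}<\sqrt{c/(2p)}$; this quotient is where the form of $K$ comes from. Corollary 8 (total turning $\ge\sqrt{c/(2p)}$ per path) and Corollary 9 (area $<16.8p^{3/2}\sqrt c$) are then used with that same $c$.

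\textbf{The large-$c$ case.} Your complementary case $c>\frac1{512p^2}$ is also unsupported: there is no direct lower bound for the area between alternate paths. The trivial bound $\mu\le 4r^2=16l^2/p^2$ is in general larger than $\sqrt{2c/p}$, which can be as small as $\frac1{16}p^{-3/2}$. Their ratio is $256\,l^2/\sqrt p$, about $4l$ when $l\ge c(M)$. So Corollary 8 gives nothing without subdividing.

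This is exactly the place where your trivial-curvature pigeonhole is the right argument. The paper subdivides into $K\approx 128\sqrt2\sqrt l\,p$ blocks, so that some block has $\mu\le\frac1{32}p^{-3/2}\le\frac12\sqrt{2c/p}$, and then argues as in the small-$c$ case. Your Fiala--Huber area cap would be an acceptable substitute for Corollary 9 there.

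So the trivial-curvature pigeonhole belongs to the large-$c$ case, while the small-$c$ case needs Lemma 10 applied to the union of all blocks. With that repair, your final exponent bookkeeping goes through.
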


Let $\beta$ be the number of the branch points of $T$. As in the proof of Theorem 1, we use $T$
to construct a polyhedral surface $\Sigma$ homeomorphic to the disc with $\beta$ faces and with at most $p$ edges on boundary. 
If $E$ denotes the number of edges in $T$,
then $\beta\leq E$, and we need an upper bound for $E$.

Note that total absolute curvature $\mu$ of $\Sigma$ does not exceed the area of the disk of radius $r_*\le{1\over 2000}$ in the hyperbolic space, and, therefore, it is less than $0.1$. As it was mentioned in Remark 2 after the proof of Lemma 5, all results of section 4
including Lemma 5 remain valid for the considered here polyhedral surface $\Sigma$ if $\varepsilon\leq {1\over 3}$. (We need these results only for  $\varepsilon={1\over p}$.)
As above we construct a ``narrow" polyhedral surface $D'$ and $D\subset D'$ that satisfies the conditions of the previous lemma. The surface $D'$ contains $n> 0.15|E|^{1\over p^2}$ vertex-disjoint
paths $\gamma_1, \ldots , \gamma_n$ connecting the vertices $A$ and $B$ in
the conditions of the previous lemma, where
$\gamma_1$ and $\gamma_n$ form the boundary of $D'$, and for each $i$ $\gamma_i$ is between $\gamma_{i-1}$ and $\gamma_{i+1}$.
The surface $D$ is bounded by $\gamma_2$ and $\gamma_{n-1}$. All paths $\gamma_i$ connect the same points $A$ and $B$ and have
the same (integer) length. Therefore, they all have the same $c={\rm length}(\gamma_i)-|AB|$. 

As in the proof of Theorem 1, we would like
to combine the upper bound of the form $f(p)\sqrt{c}$ for the area of the domain 
$D$ and a lower bound of the form $g(p)\sqrt{c}n$ in order to obtain
an upper bound for $n$ of the form ${f(p)\over g(p)}$. In the flat case the relevant results are lemmata 6 and 7. In the case of polyhedral surfaces we have Corollary 8, Lemma 9 and Corollary 9. Both Lemma 9 and Corollary 9
involve $\mu$. However, Lemma 10 (1) and (3)
imply that $\mu< 12000l^2\sqrt{c}$. This inequality can be combined with Corollary 9 to
obtain an upper bound for the area provided 
by Lemma 10 (4).
The situation with using Corollary 8 is more complicated. The lower bound provided
by Corollary 8 is non-trivial only if $\mu<\sqrt{2c\over p}$, and is really useful when, for example,
$\mu<{1\over 2}\sqrt{2c\over p}$. However,
the upper bound for $\mu$ provided by Lemma 10 is much greater than $\sqrt{2c\over p}$.
To remedy this problem we would like to lower $\mu$ by the factor $20000l^2\sqrt{p}$. 
This can be achieved by partitioning the collection of $n-2$ paths $\gamma_2, \ldots \gamma_{n-2}$ into 
$K=\lceil 20000l^2\sqrt{p}\rceil$ 
sets of consecutive paths
with at least $[{n-2\over K}]> {n-2\over K}-1$ in each.
At least one of $K$ surfaces bounded by
paths $\gamma_{j+1}$ and $\gamma_{j+[{n-2\over K}]}$ for some $j$ (that are the first and the last paths in one of the $K$ considered sets of paths) will have the total absolute curvature $\mu$ at all inner vertices bounded above by ${\sqrt{c\over 2p}}$,
and we will apply Corollary 8 to this surface that we will denote $\tilde\Sigma$. It implies that the sum of the absolute values of outer angles at all inner vertices of each of the $[{n-2\over K}]$ paths between $A$ and $B$ satisfies $\Sigma_j|\theta_j|\geq \sqrt{c\over 2p}$. Take one of these paths. For each vertex $w_j$ in this path with adjacent edges $e_j$, $e_{j+1}$ of the path incident to these vertex. Consider the (disjoint) triangles with vertex at $w_j$ and sides directed along $e_j$ and $e_{j+1}$ but of length ${1\over 2}$. The total area of these triangles will be ${1\over 8}\Sigma_j\sin |\theta_j|>{1\over 16}\Sigma_j|\theta_j|\geq {1\over 16}\sqrt{c\over 2p}$. None of these triangles defined for one of these $[{n-2\over K}]$ paths can intersect a similarly defined triangle incident
to a vertex of another path. As the result, the area
of $\tilde\Sigma$ is at least ${1\over 16}\sqrt{c\over 2p}({n-2\over K}-1)$.

Applying Corollary 9 we see that the area of $\tilde\Sigma$ does not exceed $2\max\{7.26p^{3\over 2}\sqrt{c}+1.61\cdot{1\over 2}\sqrt{2c\over p}p^2,\ 2.38\cdot{1\over 2}\sqrt{2c\over p}p^2\}<16.8p^{3\over 2}\sqrt{c}$.
Therefore,
$16.8p^{3\over 2}\sqrt{c}\geq {1\over 16}\sqrt{c\over 2p}({n-2\over K}-1)$. Therefore,
$n\leq (381p^2+1)K+2\leq (381p^2+1)(20000l^2\sqrt{p}+1)+2<
(381p^2+1)(5p^{3\over 2}+1)+2< 1980p^{7\over 2}$. Therefore, applying Lemma 5, we see that the number of edges of the dual complex, and therefore, for the original net is not more than $({1980\over 0.15}p^{7\over 2})^{p^2}\leq
(246\max\{c(M),l\})^{7*16*10^6*(\max\{c(M),l\})^4}\leq
(246\max\{c(M),l\})^{(103\max\{c(M),l\})^4}$.


However, the upper bound for $\mu$ in terms of $c$ provided by Lemma 10 is available only under the assumption that $\sqrt{c}\leq {1\over 16\sqrt{2}p}$.
Therefore, so far we proved Proposition 2 only under this assumption, and need to consider the case, when $\sqrt{c}>{1\over 16\sqrt{2}p}$ separately.
In this case we use the trivial upper bound for $\mu$ equal to
the area of the ball of radius $r_*<r$ in the hyperbolic plane, which is less than $4r^2$. We need the total absolute curvature of our surface to be at most ${1\over\sqrt{2}}{\sqrt{c}\over\sqrt{p}}$. In view of our assumption about $c$
we can replace this bound by ${1\over 32p^{3\over 2}}={
r^3\over 64\sqrt{2}l^{3\over 2}}$. This time we would like to partition our polyhedral surface into at least
${4r^2\times (64\sqrt{2}l^{3\over 2})\over r^3}=128\sqrt{2}\sqrt{l}p<7.25*10^5(\max\{c(M),l\})^{5\over 2}$.
So, we can choose $K$ as $\lceil 7.25*10^5(\max\{l, c(M)\})^{5\over 2}\rceil$, which will similarly lead to the upper bound $n\leq (381p^2+1)K+2\leq (256\max\{c(M),l\})^{13\over 2}$. Now the number of the branch vertices in the original net is less than the number of edges in the original net and the number of edges in the dual complex. Now applying Lemma 5 we obtain an upper bound for the number of the edges of the dual complex (and, therefore, the number of the vertices
of the original net) of the form $|E|\leq (350\max\{c(M),l\})^{(6.5*16*10^6*\max\{c(M),l\}^4)}\leq (350\max\{c(M),l\})^{(180\max\{c(M),l\})^4}$.
This is greater than the upper bound $(246\max\{c(M),l\})^{(103\max\{c(M),l\})^4}$ bound obtained earlier using Corollary 8 under the assumption $c\geq {1\over 512p^2}$. Therefore, the larger estimate
$(350\max\{c(M), l\})^{(180\max\{c(M),l\})^4}$ will
work for both cases.

This completes the proof of Proposition 2 and, therefore,
Theorem 2.


%


\section{Bounding the orthogonal projection
of broken geodesics on surfaces} 

\begin{lemma}
Let $M$ be a Riemannian surface with Gaussian curvature $K$ satisfying $-1 \le K \le 1$. Let $S$ be a reference geodesic 
in a convex metric disc $D$ of radius
$r\leq {1\over 2000}$ in $M$ centered at a point of $S$. Finally, let $L$ be a piecewise geodesic 
in $D$ parametrized by the arclength.

Assume that $L$ has positive turning angles. Denote the total turning angle (=the sum of all turning angles at the break points) of $L$ by $\phi$. Denote
the initial angle between $L$
and
the geodesic perpendicular to $S$
through $L(0)$ by $x$.
Assume that both $x$ and $\phi$ are less than or equal to $0.5$ radians.
Then the length $P$ of the orthogonal projection of $L$ onto $S$ satisfies:
$$ P \le 20r(x + \phi). $$
\end{lemma}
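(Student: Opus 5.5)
We work in Fermi coordinates $(s,t)$ adapted to $S$. Here $s$ is arclength along $S$ and $t$ is signed distance to $S$, so the metric is $ds^2\, G(s,t)^2+dt^2$ with $G(s,0)=1$, $G_t(s,0)=0$ and $G_{tt}=-KG$. Since $|K|\le 1$ and $|t|\le 2r\le 10^{-3}$ inside $D$, Jacobi comparison gives $\cosh$-type bounds $1-t^2\le G\le 1+t^2$ and $|G_t|\le 2|t|$. The orthogonal projection of $L(\tau)$ onto $S$ is the point with coordinate $s(L(\tau))$. Hence
\[
P\le\int_0^{\mathrm{length}(L)}|\dot s|\,d\tau,
\]
and $|G\dot s|=|\sin\psi(\tau)|$, where $\psi(\tau)$ is the angle between $\dot L$ and the coordinate field $\partial_t$, i.e.\ the geodesic perpendicular to $S$ through $L(\tau)$. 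Therefore
\[
P\le (1+4r^2)\int_0^{\mathrm{length}(L)}|\sin\psi|\,d\tau .
\]
The plan is to bound the length of $L$ and, separately, $\psi$ along $L$.

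\textbf{Step 1 (evolution of $\psi$).} Along a geodesic segment, the angle to $\partial_t$ satisfies $\dot\psi=\pm\,(G_t/G)\sin\psi$. This is the geodesic curvature of the $s$-lines, i.e.\ a Clairaut-type formula. Its magnitude is at most $3|t|\le 6r$. At a break point $\psi$ jumps by the turning angle there, so
\[
|\psi(\tau)|\le x+\phi+6r\tau .
\]
Once we know $\mathrm{length}(L)=O(1)$, this gives $|\psi|\le x+\phi+O(r)\le 1.01$ throughout. In particular $\psi$ stays well inside $(-\pi/2,\pi/2)$, so $|\dot t|=|\cos\psi|\ge\cos(1.01)>0.5$.

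\textbf{Step 2 (length bound, the key step).} Because $t$ is strictly monotone along $L$ with speed at least $0.5$, and $|t|\le 2r$ (as $D$ has radius $r$ centered on $S$), we get $\mathrm{length}(L)\le 4r/0.5=8r$. This bound is self-consistent with Step 1, since we need only $6r\cdot 8r\ll 1$. To avoid circularity, I would run a continuity (bootstrap) argument: let $\tau_0$ be the first time $|\psi|$ reaches $1.05$. Up to $\tau_0$ the length is at most $8r$, so $|\psi|\le 0.5+0.5+48r^2<1.05$, a contradiction. Hence no such $\tau_0$ exists.

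\textbf{Step 3 (conclusion).} Sharpen Step 1 using the length bound: $|\psi|\le x+\phi+48r^2$. Then
\[
P\le(1+4r^2)\cdot 8r\cdot\bigl(x+\phi+48r^2\bigr).
\]

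\textbf{Main obstacle.} The additive $48r^2$ term is not proportional to $x+\phi$. If $x=\phi=0$, the stated bound forces $P=0$, so the curvature drift must genuinely vanish in that case. Indeed, when $\psi\equiv 0$, $L$ is itself a perpendicular $t$-line, which is a geodesic, and the ODE $\dot\psi=\pm(G_t/G)\sin\psi$ is linear in $\sin\psi$. So I would instead apply Grönwall:
\[
|\sin\psi(\tau)|\le (x+\phi)\,e^{6r\tau}\le (x+\phi)\,e^{48r^2}.
\]
This yields $P\le 8r(1+4r^2)e^{48r^2}(x+\phi)\le 20r(x+\phi)$.

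The delicate points are therefore the following. First, the Clairaut-type formula for $\dot\psi$ must hold with the $\sin\psi$ factor, which must be verified from the Fermi-coordinate Christoffel symbols. Second, the bootstrap must be handled carefully. The slack between $8$ and $20$ absorbs the Jacobi-field comparison constants.
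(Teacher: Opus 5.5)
Your argument is correct, and it reaches the bound by a genuinely different route from the paper's.

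\textbf{The paper's route.} The paper never bounds the length of $L$. It passes to a minimal sub-arc $L|_{[t_1,t_2]}$ that realizes the projection, and joins its endpoints by a minimizing chord $\rho$ of length $\lambda\le 2r$. By convexity of $D$, $\rho$ meets each perpendicular to $S$ at most once, so it has the same projection. The paper then writes $P\le \lambda\max_s\alpha(s)/\cos r$ and controls the chord's angle $\alpha$ to the perpendiculars in Lemma 12. This uses Gauss--Bonnet on regions bounded by $L$, $\rho$, $S$ and perpendicular geodesics. The curvature error there is the area of the strip $D_P$ over the projected interval, a term of order $rP$, which is absorbed into the left-hand side at the end. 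A separate Gauss--Bonnet argument is also needed to show that the $S$-coordinate has at most two monotonicity intervals along $L$.

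\textbf{Your route.} You work along $L$ itself. The Fermi-coordinate geodesic equation gives $\dot\psi=-(G_t/G)\sin\psi$. A bootstrap keeps $|\psi|$ well below $\pi/2$, so the distance-to-$S$ coordinate is monotone and $\mathrm{length}(L)=O(r)$. Grönwall, using the factor $\sin\psi$ and the fact that $|\psi|$ jumps by at most the turning angle at each break point, makes the error multiplicative in $x+\phi$ rather than additive.

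\textbf{Comparison.} Your route avoids Gauss--Bonnet, the convexity and uniqueness-of-intersection argument, and the monotonicity analysis of the projection. It does not even need the sign assumption on the turning angles, since only the total absolute turning enters. The paper's route gets a slightly better constant ($3r$ versus roughly $4r$ to $8r$), because the chord has length at most $2r$, and it only has to control the angle of a single geodesic.

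\textbf{Two cosmetic points.} First, since the centre of $D$ lies on $S$, you actually have $|t|\le r$, which halves your length bound. Second, with threshold $1.05$ the speed bound is $\cos 1.05\approx 0.4976$, so the length up to $\tau_0$ is about $8.04r$ rather than $8r$. This is harmless; alternatively, use the threshold $1.01$ throughout.
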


\begin{figure}[ht]
\centering
\resizebox{\ifdim\width>\textwidth \textwidth\else\width\fi}{!}{%
\begin{tikzpicture}[
   >={Stealth[length=2.2mm]},
   geo/.style={line width=1.1pt, black},
   nrm/.style={line width=0.5pt, dashed, gray!65},
   curve/.style={line width=1.1pt, blue!70!black},
   chord/.style={line width=0.9pt, black!62},
   proj/.style={line width=0.9pt, red!75!black},
   ang/.style={line width=0.6pt, red!70!black},
   dot/.style={circle, fill=blue!70!black, inner sep=1.2pt},
   scale=1.35]

\fill[blue!7] (1.900,1.600) -- (2.100,2.733) -- (2.568,3.783) -- (3.276,4.689) -- cycle;

\draw[geo] (0.75,0) -- (4.25,0);
\node[right] at (4.27,0) {$S$};
\draw[->,line width=0.7pt] (1.08,0.26) -- (1.63,0.26);
\node[font=\footnotesize,right] at (1.65,0.26) {$p$};

\draw[nrm] (1.000,-0.35) -- (1.000,4.30);
\draw[nrm] (3.850,-0.35) -- (3.850,4.30);
\draw[nrm] (2.510,-0.35) -- (2.510,4.30);
\draw[nrm,red!50] (1.900,-0.62) -- (1.900,1.600);
\draw[nrm,red!50] (3.276,-0.62) -- (3.276,4.689);

\draw[proj,|-|] (1.900,-0.62) -- (3.276,-0.62);
\node[red!75!black,font=\footnotesize,below] at (2.588,-0.64) {$P$};

\draw[curve] (2.510,0.452) -- (1.900,1.600) -- (2.100,2.733) -- (2.568,3.783) -- (3.276,4.689);
\foreach \p in {(2.510,0.452),(1.900,1.600),(2.100,2.733),(2.568,3.783),(3.276,4.689)}
   \node[dot] at \p {};
\node[blue!70!black,font=\footnotesize,left] at (2.28,3.32) {$L$};

\draw[chord] (1.900,1.600) -- (3.276,4.689);
\node[black!62,font=\footnotesize] at (2.30,2.02) {$\rho$};

\draw[ang] (2.510,0.452) ++(90:0.60) arc (90:118:0.60);
\node[red!70!black,font=\footnotesize] at (2.304,1.277) {$x$};

\draw[gray!70!black,line width=0.5pt,dotted] (1.900,1.600) -- (1.524,2.306);
\draw[ang] (1.900,1.600) ++(80:0.62) arc (80:118:0.62);
\node[red!70!black,font=\footnotesize] at (1.775,2.390) {$\phi_1$};

\node[circle,fill=black!62,inner sep=1.2pt] at (2.510,2.970) {};
\draw[ang] (2.510,2.970) ++(246:0.50) arc (246:270:0.50);
\node[red!70!black,font=\footnotesize,right] at (2.575,2.62) {$\alpha(s)$};

\node[font=\footnotesize,right] at (2.55,0.452) {$L(0)$};
\node[font=\footnotesize,left]  at (1.85,1.580) {$L(t_1)$};
\node[font=\footnotesize,above right] at (3.276,4.689) {$L(t_2)$};

\end{tikzpicture}
}
\caption{The setting of Lemma 11: the piecewise geodesic $L$, the minimizing chord $\rho$ joining $L(t_1)$ and $L(t_2)$, and their common orthogonal projection of length $P$ onto $S$ (angles exaggerated).}
\label{fig:lemma11}
\end{figure}
\begin{proof}


Observe, that in D we can use Fermi coordinates (p, y) (a.k.a the geodesic parallel coordinates), where p is the distance along S and y is the orthogonal distance from S. It is
well-known that Riemannian metric in these coordinates is given by
$$ ds^2=dy^2+J^2(p,y) dp^2.$$
where $J(p,y)=1$ and ${\partial J\over \partial y}(p,0)=0$. Observe that for each $p$ $J(p, y){\bf N}(p, y)$ is the Jacobi
field along the geodesic perpendicular to $S$ at $S(p)$. (Here ${\bf N}(p, y)$ denotes the unit vector
perpendicular to this geodesic at the point with coordinates $(p, y)$.) Therefore, $J(p, y)$ satisfies
the Jacobi equation $J_{yy}+ KJ = 0$ with initial conditions $J(p, 0) = 1$ and $J_y(p, 0) = 0$. It is easy to deduce from these equation and initial conditions that the curvature condition
$-1\leq K\leq 1$ implies that $\cosh y\geq J(p,y)\geq\cos y$,
which is also a corollary of the more general Rauch comparison theorem.


The projection of $L$ to $S$ is a projection of an arc of $L$ between $L(t_1)$ and $L(t_2)$ for some $t_1$ and $t_2$, and assume that no subarc of $L|_{[t_1, t_2]}$ has this property.
Connect $L(t_1)$ with $L(t_2)$ by a minimizing geodesic $\rho$ and parametrize $\rho$ by its arclength $s$.
Denote the length of $\rho$ by $\lambda$.
Obviously, $\lambda\leq 2r$.
Note that $\rho$ cannot intersect any geodesic perpendicular to $S$ at more than one point.
Therefore, $\rho$ has exactly
the same orthogonal projection to $S$ as $L$, and the length of this projection is equal to $P$.

Consider the domain $D_P\subset D$ that consists of all points $a$ in $D$ such that the orthogonal projection of $a$ to $S$ is a projection of some points in the image of $L$. In particular, $D_P$ project to an arc of length $P$ in $S$.
The inequality $J(p,y)\leq\cosh y$ implies that the area
of $D_P$ does not exceed $2P\sinh r$, which is less than $(10^{-3}+10^{-10})P$, when $r\leq {1\over 2000}$. Of course, this area also does not exceed the area of the whole disc $D$ that can be majorized by the area of the hyperbolic disc of radius $r\leq {1\over 2000}$ which is less than $10^{-6}$.


Now we are going to look into how the coordinate $p$ of $L(t)$ changes with $t$.  This coordinate can stop behaving monotonously only when the angle $\beta(t)$ between the tangent to $L(t)$ and the coordinate line $p=p(L(t))$ changes its sign. As these coordinate lines are geodesics, this can happen only at the vertices of $L$. We claim that this can happen at most once.
Indeed, if it happens twice, then at the moment when the sign of ${dp(L(t))\over dt}$ changes the second time, the total change of the angle $\beta$ needs to reach at least $\pi$, where 
the condition $\phi\leq 0.5$ implies that the turn at all vertices of $L$ does not exceed $0.5$ radian. Applying the Gauss-Bonnet theorem to each of two domains between two arcs of $L$ where $p(L(t))$ increases, $S$ , and vertical geodesics
we realize that there might be also contributions from the total curvature
terms in the Gauss-Bonnet formula. However, these contributions together cannot exceed $2Area(D)< 10^{-5}$. This contribution is not sufficient to cover
the missing $\pi-0.5$ radian.

This, $p(L(t))$ has at most two monotonicity intervals. This implies
that $p(L(t))$ is monotonous on the interval $t\in [t_1, t_2]$. It also
implies that $t_1=0$, or $t_2=$ length$(L)$, or both. If $L$ is not an arc of a vertical geodesic (in which case the lemma is trivial), $[t_1, t_2]$ and at most one of 
the intervals $[0, t_1)$, $(t_2, 1]$ have non-zero length. These (one or two) intervals are the intervals of monotonicity
of $p(L(t))$.

Denote the sum of turning angles of $L$ for vertices in $[0, t_1]$ interval by
$\phi'$ and in $(t_1, t_2]$ interval by $\phi''$. Obviously, $\phi'+\phi''\leq \phi\leq 0.5$. Now we are going to evaluate $\beta(t_1)$, that is the angle
between the tangent to $L$ at $t_1^+$ and the coordinate line $p=p(L(t_1))$.
If $t_1=0$, then this angle is equal to $x\leq 0.5$ radian. Otherwise, we can apply the Gauss-Bonnet theorem to a domain (or several domains) between $L|_{[0, t_1]}$, $S$, and vertical geodesic segments through $L(0)$ and $L(t_1)$ perpendicular to $S$. The interiors of this domain or domains are homeomorphic to discs as the coordinate $p$ is monotonous along this arc of $L$ and along $S$. As the result, we see that the
absolute value of the difference between $\beta(t_1)$ and $\beta(0)=x$ does not
exceed $\phi'$ plus the area of $D_P$, which is less than $\phi'+(10^{-3}+10^{-10})P$. Therefore, $\vert\beta(t_1)\vert\leq x+\phi'+(10^{-3}+10^{-10})P$.

Let $p(s)$ and $y(s)$ denote $p(\rho(s))$
and $y(\rho(s))$.  Note that
$|y'|=\cos\alpha$ and $J|p'|=\sin\alpha$,
where $\alpha=\alpha(s)$ is the angle 
between the tangent to $\rho$ at $\rho(s)$ and the coordinate line $p=p(s)$ which is a geodesic perpendicular to $S$. Choose the direction of coordinate lines so that
$p$ increases along $\rho$ (and, therefore, along $L|_{[t_1, t_2]}$), and $\alpha(0)$ and, therefore, $\alpha(s)$ for all $s$ is positive.

Note that
$P=\int_0^{\lambda} p'(s)ds\leq
\int_0^{\lambda}{\sin\alpha(s)\over J} ds\leq \max_s \alpha(s)\int_0^{\lambda}{ds\over J(p(s), y(s))}.$
Recall that $J\geq \cos(y(s))\geq \cos r$. Therefore, $P\leq {\lambda\over \cos r}\max_{s\in [0,\lambda]}\alpha(s)\leq 2r(1+10^{-10})\max_s\alpha(s)$.

\begin{lemma}
   $\max_s \alpha(s)\leq x+\phi+3(10^{-3}+10^{-10})P.$
\end{lemma}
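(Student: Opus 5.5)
We bound $\alpha(s)$ by comparing the direction of the chord $\rho$ at $\rho(s)$ with the direction of $L$, using Gauss--Bonnet on domains cut out by vertical geodesics. Recall that $\alpha(s)$ is the angle between $\rho'(s)$ and the vertical geodesic $p=p(s)$, and $\beta(t)$ is the analogous angle for $L$. The estimate $|\beta(t_1)|\le x+\phi'+(10^{-3}+10^{-10})P$ is already established. Both $\rho$ and $L|_{[t_1,t_2]}$ are monotone in $p$ and have the same projection onto $S$. Hence for each $s$ the vertical geodesic through $\rho(s)$ meets $L|_{[t_1,t_2]}$ at a unique point $L(t(s))$.

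\textbf{Step 1: angle along $L$.} Apply Gauss--Bonnet to the domain bounded by $L|_{[t_1,t]}$, the two vertical geodesics through $L(t_1)$ and $L(t)$, and $S$. Since $p$ is monotone, this domain is a union of discs. The vertical sides are geodesics and $S$ is a geodesic meeting them at right angles. Thus the change of $\beta$ from $t_1$ to $t$ is bounded in absolute value by the turning $\phi''$ plus the total curvature, which is at most $\mathrm{Area}(D_P)\le (10^{-3}+10^{-10})P$. Therefore
$$|\beta(t)|\le x+\phi'+\phi''+2(10^{-3}+10^{-10})P\le x+\phi+2(10^{-3}+10^{-10})P.$$
All these angles are at most about $0.5+10^{-5}$, so no sign issues arise. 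Since $\beta$ keeps one sign on $[t_1,t_2]$, we may work with signed angles.

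\textbf{Step 2: from $L$ to $\rho$.} Consider the geodesic polygon $Q_s$ bounded by $\rho|_{[0,s]}$, the vertical segment from $\rho(s)$ to $L(t(s))$, and $L|_{[t_1,t(s)]}$. The vertical segment may be degenerate, and if $\rho$ and $L$ cross, $Q_s$ splits into several discs, each handled the same way. The pieces of $\rho$ and $L$ are geodesic arcs, and the two vertical geodesics make angles $\alpha(s)$ and $\beta(t(s))$ with $\rho$ and $L$ respectively. Gauss--Bonnet on $Q_s$ expresses $\alpha(s)-\beta(t(s))$ through three quantities: the corner angle of $Q_s$ at the common point $L(t_1)=\rho(0)$, the turning angles of $L$ inside $[t_1,t(s)]$, and the curvature of $Q_s$, which is at most $\mathrm{Area}(D_P)$.

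The key point is to avoid double-counting. The corner at $\rho(0)$ is the angle between $\rho'(0)$ and $L'(t_1^+)$. Since $\rho$ is a chord of the locally convex (positively turning) arc $L|_{[t_1,t_2]}$, this angle is controlled by the total turning $\phi''$ and the curvature. Signs then make $\alpha(s)$ lie between the directions of $L$ at the ends, up to curvature: $\rho$ is a chord of a curve turning consistently in one direction, so its direction is sandwiched between the initial and final directions of that arc. Making this sandwich precise on a surface via Gauss--Bonnet, rather than adding the bounds naively, costs one further curvature term $(10^{-3}+10^{-10})P$.

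Combining the two steps gives
$$\alpha(s)\le \max_t|\beta(t)|+(10^{-3}+10^{-10})P\le x+\phi+3(10^{-3}+10^{-10})P.$$

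The main obstacle is Step 2. One must check the orientation and sign conventions in Gauss--Bonnet for $Q_s$ so that the turning of $L$ is not counted twice. This is where the positivity of the turning angles of $L$ is used: it forces the chord direction to lie between the extreme directions of $L$. One must also handle the case where $\rho$ and $L$ intersect, which is done by applying the argument to each lune separately; the curvature contributions still sum to at most $\mathrm{Area}(D_P)$.
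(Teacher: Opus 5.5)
Your Step 1 is correct. Step 2, however, is the only nontrivial step, and it is a heuristic rather than a proof; the region you propose does not deliver what you claim.

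Gauss--Bonnet on $Q_s$ gives an identity of the form $\alpha(s)=\beta(t(s))\mp\Theta_{(t_1,t(s))}\pm\gamma_0+\int_{Q_s}K$. Here $\Theta_{(t_1,t(s))}$ is the turning of $L$ on $(t_1,t(s))$ and $\gamma_0$ is the corner angle at $\rho(0)=L(t_1)$. This identity involves only the near corner, so it cannot by itself give a two-sided ``sandwich'' between the directions of $L$ at $t_1$ and $t_2$. For that you would also need the far corner at $L(t_2)$. You would also need sign information at both corners, namely that $\rho$ leaves on the side toward which $L$ turns. On a curved surface, where you yourself allow $\rho$ and $L$ to cross, this is not automatic.

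Suppose you close the argument the natural way, using $\gamma_0\le\phi''+\mathrm{Area}$ from the lune between $\rho$ and $L|_{[t_1,t_2]}$. Plugging in Step 1's bound on $\max_t|\beta(t)|$ counts part of the turning $\phi''$ twice. Tracking $\Theta_{(t_1,t(s))}$ so that it cancels instead leaves four curvature terms: under $L|_{[0,t_1]}$, under $L|_{[t_1,t(s)]}$, the lune, and $Q_s$. That gives $4(10^{-3}+10^{-10})P$ rather than $3(10^{-3}+10^{-10})P$. Getting back to three requires seeing that the signed integrals over $Q_s$ and under $L|_{[t_1,t(s)]}$ recombine into a single curvature integral over the region under $\rho|_{[0,s]}$. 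Your sentence ``making this sandwich precise \dots\ costs one further curvature term'' is exactly the claim that needs proof.

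The clean route avoids both the sandwich and the uniform bound on $\beta$. First, apply Gauss--Bonnet to the lune (or lunes) between $\rho$ and $L|_{[t_1,t_2]}$ and discard the nonnegative far corner angle. This gives $|\alpha(0)-\beta(t_1)|\le\phi''+\mathrm{Area}(D_P)$ with no sign analysis. Second, apply Gauss--Bonnet to the region bounded by $\rho|_{[0,s]}$, the vertical geodesics through $\rho(0)$ and $\rho(s)$, and $S$. All its sides are geodesic with right angles at $S$, so $|\alpha(s)-\alpha(0)|\le\mathrm{Area}(D_P)$. Together with $|\beta(t_1)|\le x+\phi'+(10^{-3}+10^{-10})P$, this gives exactly three area terms. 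This is the paper's proof.
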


Using this lemma we can finish the proof of Lemma 11 as follows. The last inequality before the lemma implies that
$P\leq 2(1+10^{-10})r(x+\phi)+6(1+10^{-10})(10^{-3}+10^{-10})rP$.
When $r\leq {1\over 2000}$, the last term in the right-hand side is less than
$10^{-5} P$.
Therefore, $(1-10^{-5})P\leq 2(1+10^{-10}) r(x+\phi)$. Dividing both sides by the coefficient at $P$ , we conclude that $P< 3r(x+\phi)< 20r(x+\phi)$.

This concludes the proof of Lemma 11 modulo Lemma 12.
\end{proof}

It remains to prove Lemma 12.

\begin{proof}
First, we are going to estimate $\alpha(0)$. As we already have an upper bound for $\beta(t_1)$, we would like to evaluate $|\alpha(0)-\beta(t_1)|$ which is the angle between the tangent to $\alpha$ at $\alpha(0)$ and the tangent to $L$ at $t_1^+$. Note that $p$ is strictly monotonous along $\alpha$, is monotonous along the arc of $L$ between
$t_1$ and $t_2$. Moreover, it is eather increasing on both these arcs or is decreasing along both this arcs. Therefore, the open domain between these curves is homeomorphic to an open disc or union of several open discs. Applying
the Gauss-Bonnet theorem we see that the
considered angle, that is, $|\alpha(0)-\beta(t_1)|$ does not exceed the sum of $\phi''$ and the area of $D_P$. Adding this estimate to our upper bound for $\beta(t_1)$ we see that $\alpha (0)$
doex not exceed $x+\phi''+\phi' + 2(10^{-3}+10^{-10})P\leq x+\phi +2(10^{-3}+10^{-10})P$.

Now we are going to evaluate $\vert\alpha(s)-\alpha(0)\vert$ for an arbitrary $s$. We are going to consider the open domain bounded by $\alpha$, $S$, and two vertical geodesics perpendicular to $S$ passing through the endpoints of $\alpha$. As $p$ is strictly monotonous along both $\alpha$
and $S$ this domain is homeomorphic to an open disc or the union of two open discs. (Two minimizing geodesics can intersect at at most one point.) Applying
the Gauss-Bonnet theorem, we see
that for each $s$ $|\alpha(s)-\alpha(0)|$
does not exceed the area of $D_P$, which together with the estimate $|\alpha(0)|\leq x+\phi+2(10^{-3}+10^{-10})P$ implies the assertion of the lemma.
\end{proof}

\forget
Note that the projection of $\alpha$ to
$S$ is the arc $A'B'$ of $S$, and it coincides with the projection of $[AB]$ to $S$. The length of the minimal geodesic $[AB]$ does not exceed the diameter of the disc $2r\leq 10^{-3}$.

We are going to evaluate the distance $d_{BA}$ form $B$ to $g_A$. Let $A''$ be the nearest point to $B$ on $g_A$. We will use Toponogov theorem for $K\geq -1$. The triangle $AA''B$ has side
$AB$ of length $\leq 2r$, the right angles at $A''$, and the angle $\alpha$ at $A'$ which is less than $x+\phi+(2+10^{-6})rP$. Consider the comparison triangle in the hyperbolic plate. The hyperbolic law of sines implies that length $d_{BA}$
of side $A'B$ satisfies the inequality
$\sinh(d_{BA})\leq \sinh (2r)\sin (x+\phi+(2+10^{-6})rP)$.
As $\sinh (d_{BA})\geq d_{BA}$, $\sin (x+\phi+(2+10^{-6})rP)\leq x+\phi+(2+10^{-6})rP$, and for $r\leq 5*10^{-4}$
$\sinh 2r\leq (2+10^{-6})r$, we obtain
$$d_{BA}\leq (2+10^{-6})r(x+\phi+(2+10^{-6})rP)\leq (2+10^{-6})r(x+\phi)+(4+10^{-5})r^2P.$$

In order to evaluate $P=dist(A', B')$ using this inequality for $d_{BA}$ consider first the geodesic triangle 
$AB'A''$. Denote the length of its side $aB'$ by $|AB'|$. 



{\bf Second proof:}
Observe, that in $D$ 
we can use Fermi coordinates $(p, y)$ (a.k.a the geodesic parallel coordinates), where $p$ is the distance along $S$ and $y$ is the orthogonal distance from $S$ (Figure~\ref{fig:fermi}). It is well-known that Riemannian metric in these coordinates is given by:
$$ ds^2 = J^2(p, y) dp^2 + dy^2, $$
where $J(p,y)=1$ and $J_y(p,0)=0$.
Observe that for each $y$ $J(p,y)N(p,y)$
is the Jacobi field along the geodesic
perpendicular to $S$ at $S(y)$. (Here
$N(p,y)$ denotes the unit vector perpendicular
to this geodesic at the points with coordinates $(p,y)$.)
Therefore, $J(p,y)$ satisfies the Jacobi equation $J_{yy} + K J = 0$ with initial conditions $J(p, 0) = 1$ and $J_y(p, 0) = 0$. By the Rauch comparison theorem, the curvature bounds $-1 \le K \le 1$ imply that for $y > 0$:
$$ \cos(y) \le J(p, y) \le \cosh(y). $$
Furthermore, the logarithmic derivative satisfies $|J_y/J| \le \tan(y)$.


\begin{figure}[ht]
\centering
\resizebox{\ifdim\width>\textwidth \textwidth\else\width\fi}{!}{%
\begin{tikzpicture}[
   >={Stealth[length=2.2mm]},
   geo/.style={line width=1.1pt, black},
   nrm/.style={line width=0.5pt, dashed, gray!65},
   curve/.style={line width=1.1pt, blue!70!black},
   proj/.style={line width=0.9pt, red!75!black},
   ang/.style={line width=0.6pt, red!70!black},
   dot/.style={circle, fill=blue!70!black, inner sep=1.2pt},
   scale=1.0]

\draw[geo] (-0.6,0) -- (9.2,0);
\node[right] at (9.2,0) {$S$};
\draw[->,line width=0.7pt] (-0.55,-0.40)--(0.35,-0.40);
\node[font=\footnotesize,right] at (0.40,-0.40) {$p$};

\foreach \x in {1,2,...,8} \draw[nrm] (\x,-0.35) -- (\x,3.1);

\coordinate (L0) at (1.55,0.95);
\coordinate (L1) at (2.7,1.85);
\coordinate (L2) at (4.4,2.35);
\coordinate (L3) at (6.2,2.05);
\coordinate (L4) at (7.35,1.25);
\draw[curve] (L0)--(L1)--(L2)--(L3)--(L4);
\node[blue!70!black] at (4.4,2.62) {$L$};
\foreach \p in {(L0),(L1),(L2),(L3),(L4)} \node[dot] at \p {};

\draw[nrm,gray!85!black] (L0) -- ($(L0)+(90:1.05)$);
\draw[ang] (L0) ++(90:0.72) arc (90:38:0.72);
\node[red!70!black,font=\footnotesize] at ($(L0)+(63:1.0)$) {$x$};

\draw[gray!70!black,line width=0.5pt,dotted] (L1) -- ($(L1)+(38:1.15)$);
\draw[ang] (L1) ++(38:0.85) arc (38:16:0.85);
\node[red!70!black,font=\footnotesize] at ($(L1)+(27:1.28)$) {$\phi_1$};

\draw[proj,|-|] (1.55,-0.62) -- (7.35,-0.62);
\node[red!75!black,font=\footnotesize,below] at (4.45,-0.60) {$P$};
\draw[nrm,red!45] (L0) -- (1.55,-0.62);
\draw[nrm,red!45] (L4) -- (7.35,-0.62);

\draw[<->,line width=0.6pt,gray!70!black] (8.65,0) -- (8.65,2.35);
\node[gray!70!black,font=\footnotesize,right] at (8.68,1.2) {$\le 2r$};

\end{tikzpicture}
}
\caption{Fermi coordinates $(p,y)$ along the reference geodesic $S$: the curve $L$, its initial deviation $x$, a turning angle $\phi_1$, and its projection $P$ onto $S$.}
\label{fig:fermi}
\end{figure}
Before proving Lemma 11 we will need the following two lemmas:



\begin{lemma}[Upper Bound on Arc Length]
The total arc length $\mathcal{L}$ of the curve $L$ is bounded by:
\[ \mathcal{L} \le \frac{2r \cosh(2r)}{\cos (\phi+2*10^{-6})} \]
\end{lemma}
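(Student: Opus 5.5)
The plan is to compare $L$ with the minimizing geodesic chord between its endpoints, and to bound the length of $L$ by its ``forward progress'' along that chord. Let $\rho$ be a minimizing geodesic from $L(0)$ to the endpoint of $L$, of length $\lambda\le 2r$ (both endpoints lie in $D$). Introduce Fermi coordinates $(q,y)$ along $\rho$ instead of along $S$, with metric $dy^2+J^2(q,y)\,dq^2$. The same Jacobi-equation/Rauch argument recalled just above gives $\cos y\le J\le\cosh y$. Every point of $L$ lies within distance $2r$ of $\rho$, because $L\subset D$ and $\rho\subset D$ by convexity. Hence $J\le\cosh(2r)$ along $L$.

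Let $\omega(t)$ be the angle between $\dot L(t)$ and the coordinate direction $\partial_q$, that is, the complement of the angle with the geodesic perpendicular to $\rho$ through $L(t)$. Then $J\,\frac{d q(L(t))}{dt}=\cos\omega(t)$. The key claim is
$$|\omega(t)|\le \phi+2\cdot 10^{-6}\qquad\text{for all } t.$$
Granting it, $q(L(t))$ is strictly monotone, since $\phi\le 0.5$. So $q$ runs exactly from $0$ to $\lambda$, and
$$\cos(\phi+2\cdot10^{-6})\,\mathcal L\le\int_0^{\mathcal L}J\,\frac{dq}{dt}\,dt\le\cosh(2r)\int_0^{\mathcal L}\frac{dq}{dt}\,dt=\lambda\cosh(2r)\le 2r\cosh(2r).$$
This is the desired bound.

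To prove the claim I will use Gauss--Bonnet twice, as in the proof of Lemma 12. First, I bound the angle between $\dot L(0^+)$ and $\dot\rho(0)$. Since all turning angles of $L$ are positive, $L$ together with $\rho$ bounds a domain, or a chain of discs meeting at the intersection points of $L$ and $\rho$. Gauss--Bonnet on such a disc gives the following relation. The sum of the exterior angles of $L$ inside it, plus the two corner angles at the common endpoints, equals $\pi$ minus the corner terms, up to $\int|K|\le\mathrm{Area}(D)$. Since $D$ has radius at most $\frac{1}{2000}$, this area is below $10^{-6}$. Hence the initial corner angle is at most $\phi+10^{-6}$.

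Second, I compare the direction of $\dot L(t)$ with $\partial_q$ at $L(t)$. For this I take the domain bounded by $L|_{[0,t]}$, $\rho$ and the perpendicular geodesic from $L(t)$ to $\rho$. Gauss--Bonnet again shows that $\omega(t)$ differs from the initial angle by at most the turning of $L$ on $[0,t]$ plus another area term $\le10^{-6}$. Together this gives $|\omega|\le\phi+2\cdot10^{-6}$.

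The main obstacle is the topology in this Gauss--Bonnet step. One must make sure the regions involved really are discs, so that no winding of $\pi$ sneaks in. This requires the monotonicity of $q$ along $L$, which is itself what we are proving. I would resolve the circularity by a continuity (bootstrap) argument. Let $t_0$ be the first time at which $|\omega|$ reaches $\pi/2$. On $[0,t_0]$ the coordinate $q$ is monotone, so the regions are discs and the estimate $|\omega|\le\phi+2\cdot10^{-6}<\pi/2$ holds there. This contradicts the choice of $t_0$, so no such time exists and monotonicity holds on the whole of $L$.
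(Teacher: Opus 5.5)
Your proposal is correct in outline, and its last step is exactly the paper's. You use Fermi coordinates along the chord, the bound $J\le\cosh(2r)$ on $L$, and integrate $J\,dq/dt=\cos\omega\ge\cos(\phi+2\cdot10^{-6})$; the paper writes this as $\lambda=\int_0^{\mathcal L}\cos\alpha/J\,ds$.

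You differ in how you get the angle bound. The paper uses a single Gauss--Bonnet. From $L(s)$ it follows the equidistant curve through $L(s)$ into the region between the chord and $L$ until it exits through $L$, and applies Gauss--Bonnet to the resulting lens. Both corner angles of the lens are nonnegative, so the angle at $L(s)$ is at most $\phi$ plus two error terms: the area ($<10^{-6}$), and the total geodesic curvature of the equidistant ($|\kappa_g|\le\sinh r$ on a curve of length about $2r$, so also $<10^{-6}$). You keep all sides geodesic (the chord and the perpendicular through $L(t)$), so both error terms are area terms and you never need the curvature of equidistants. The price is a separate bound on the corner angle at $L(0)$ and a continuity argument to keep the region a disc; this is essentially the mechanism of Lemma 12 in the main text. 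The paper's lens compares $\dot L$ with the equidistant direction in one step, and the turning of $L$ enters only once, with one sign.

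That last point matters for your write-up. As literally stated, your combination gives only $2\phi+2\cdot10^{-6}$ by the triangle inequality: the initial angle is at most $\phi+10^{-6}$, and $\omega(t)$ is within ``turning so far plus $10^{-6}$'' of it. You must keep signs. Let $\theta_A,\theta_B\ge0$ be the corner angles at the endpoints. Since all $\tau_i\ge 0$ turn toward $\rho$, Gauss--Bonnet gives the identities $\omega(t)=\theta_A-\sum_{t_i<t}\tau_i-\int_{\Omega_t}K$ and $\theta_A+\theta_B=\sum_i\tau_i+\int_\Omega K$. Hence $-\theta_B-10^{-6}\le\omega(t)\le\theta_A+10^{-6}$. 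With $\theta_A,\theta_B\le\phi+10^{-6}$, this is exactly $|\omega|\le\phi+2\cdot10^{-6}$.

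Also, your first Gauss--Bonnet involves all of $L$, so the bootstrap (which only controls $L|_{[0,t_0]}$) does not cover it. You need separately that $L$ is embedded and bounds a disc (or chain of discs) with $\rho$. This holds because any loop of $L$ would force total turning at least $\pi-10^{-6}$. The paper assumes this tacitly as well.
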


\begin{proof}
Let $\gamma$ be a minimizing geodesic chord in $M$ connecting the endpoints of $L$, with length $\lambda \le 2r$. In Fermi coordinates $(p', y')$ along $\gamma$, the metric is $ds^2 = J_\gamma^2 dp'^2 + dy'^2$. As above, the Rauch comparison theorem implies $J_\gamma(p', y') \le \cosh(y') \le \cosh(2r)$. Let $\alpha(s)$ denote the angle between the tangent vector of $L$ at $L(s)$ and the coordinate line $y'=const$ through $L(s)$. 

We would like to find a good upper bound for $\vert\alpha(s)\vert$. Assume that $L(s)$ is not one the vertices. If tangent
lines of $L$ at $L(s)$ and the line $y'=y'(L(s))$ do not coincide then an arc
of $y'=y'=y'(L(s))$ enters the bounded domain between $\gamma$ and $L$. It does not intersect $\gamma$, so it will exit
through $L$. Thus, we will obtain a domain bounded by the union of the coordinate line $y'=y'(L(s))$. The angle
of interest for us is one of two (acute) inner angles; the second angle will be at another points of the intersection.
We are going to apply the Gauss-Bonnet
theorem to this disc-shaped domain. Its area, and, therefore, the total curvature will not exceed the area of the disc, which is less than or equal to the area of the hyperbolic disc of radius $r\leq {1\over 2000}$, which is less than $10^{-6}$.
The sum of the turning angles at all other vertices does not exceed $\phi$.
Therefore, $|\alpha(S)|$ will be less that or equal $\phi+10^{-6}+$ the total absolute geodesic curvature of the coordinate line $y'=y'(L(s))$. The length of each coordinate line does not exceed the maximal length of a coordinate line in a hyperbolic ball of radius $2r$ which is trivially bounded by $2r\cosh r\leq 2*10^{-3}$. (In fact, $r$ is very small ($\leq 0.0005$), the length
of each coordinate line does not exceed
$2r\leq 10^{-3}$.) Now let us estimate
the geodesic curvature $\kappa_g$ of 
coordinate lines. Applying the formula for the geodesic curvature of the geodesic coordinate curves $\kappa_g=-{\partial J\over \partial y'}$.
Therefore, $\vert\kappa_g\vert\leq \sinh(r)$. So, the total absolute geodesic curvature does not exceed $2r\cosh r\sinh r=r\sinh (2r)\leq 10^{-3}\sinh(2*10^{-3})<10^{-6}$.
Summarizing, $|\alpha(s)|<\phi+2*10^{-6}$.


Thus:
$$2r\geq \lambda = \int_0^\mathcal{L} \frac{\cos \alpha(s)}{J_\gamma} ds \ge \int_0^\mathcal{L} \frac{\cos (\phi+2*10^{-6})}{\cosh(2r)} ds = \mathcal{L} \frac{\cos (\phi+2*10^{-6})}{\cosh(2r)}. $$
Rearranging yields the stated bound.
\end{proof}

The angle deviation, $\theta(s)$, is, by definition, the angle between the tangent
vector of $L$ at $L(s)$ and the normal to
$\gamma$ that passes through $L(s)$.

\begin{lemma}[Evolution of the Angle]
The angle deviation $\theta(s)$ is bounded by:
\[ |\theta(s)| \le (x + \phi) \exp \left( \frac{2.06r^2 \cosh^2(2r)}{\cos^2 (\phi+2*10^{-6})} \right) \]
\end{lemma}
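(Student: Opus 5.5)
We need to prove the "Evolution of the Angle" lemma, which bounds
\[ |\theta(s)|\le (x+\phi)\exp\Big({2.06r^2\cosh^2(2r)\over\cos^2(\phi+2\cdot 10^{-6})}\Big). \]
Here $\theta(s)$ is the angle between the tangent to $L$ at $L(s)$ and the normal to $\gamma$ through $L(s)$. The plan is to write down an ODE (in fact a differential inequality) for $\theta$ along $L$ in the Fermi coordinates $(p',y')$ adapted to the chord $\gamma$, and then close it with a Gronwall argument. The factor $\mathcal{L}$ from the previous lemma supplies the length over which we integrate.

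\emph{Step 1: the differential relation on each geodesic segment.} On a geodesic segment of $L$ the tangent has zero geodesic curvature. The angle it makes with the coordinate field $\partial_{y'}$ therefore changes only because the coordinate frame rotates. In the metric $ds^2=J_\gamma^2dp'^2+dy'^2$, the orthonormal frame is $e_1=J_\gamma^{-1}\partial_{p'}$, $e_2=\partial_{y'}$. Its connection form is $\omega=-(\partial_{y'}J_\gamma)\,dp'$ (Liouville's formula). Along a unit-speed curve this gives
\[ {d\theta\over ds}=-\partial_{y'}J_\gamma\cdot{dp'\over ds}=-{\partial_{y'}J_\gamma\over J_\gamma}\sin\theta, \]
up to sign conventions. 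Hence $|\theta'|\le |J_y/J|\,|\sin\theta|\le |J_y/J|\,|\theta|$.

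\emph{Step 2: bound the coefficient.} By Rauch comparison we have $|J_y/J|\le\tan y'$, and on the disc $|y'|\le 2r$, so $|J_y/J|\le\tan(2r)$. Alternatively, integrate the Jacobi equation $J_{yy}=-KJ$ with $|K|\le1$ to get $|J_y|\le\sinh y'$ together with $J\ge\cos y'$. In either case the coefficient is at most about $2.06\,r$ for $r\le 1/2000$; the constant $2.06$ absorbs $\tan(2r)/r$ and the $\cos$ denominators.

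\emph{Step 3: account for the break points.} At each vertex of $L$, $\theta$ jumps by at most the turning angle there, and these turning angles sum to at most $\phi$. The initial value satisfies $|\theta(0)|\le x$ plus a correction. That correction is the angle between the normal to $S$ and the normal to $\gamma$ at $L(0)$; it is $O(10^{-6})$ by the Gauss--Bonnet estimate already used in the proof of the previous lemma, and it should be absorbed or ignored as in the statement.

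\emph{Step 4: apply Gronwall.} Set $u(s)=|\theta(s)|$. Between jumps $u'\le c\,r\,u$, and the jumps add at most $\phi$ in total. A Gronwall argument for functions with jumps then gives
\[ u(s)\le (x+\phi)\exp\Big(c\,r\int_0^{\mathcal L}ds\Big)=(x+\phi)\exp(c\,r\,\mathcal{L}). \]
Now substitute $\mathcal L\le 2r\cosh(2r)/\cos(\phi+2\cdot10^{-6})$ from the previous lemma. This gives an exponent of order $r^2\cosh(2r)/\cos(\cdot)$, which is bounded by the stated (cruder, squared) expression. The squares in the statement most likely come from a Cauchy--Schwarz step, or from bounding $|J_y/J|$ itself by $r\cosh(2r)/\cos(\cdot)$; either way the target is an upper bound, so our version implies it.

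The main obstacle is Step 1 with correct signs and conventions. I would verify the formula $\theta'=-(J_y/J)\sin\theta$ on a model case, such as a geodesic in the round sphere crossing a parallel family. I would also make sure that $\theta$ is measured consistently across the jumps, so that the Gronwall argument applies to $|\theta|$ without sign cancellations being needed.
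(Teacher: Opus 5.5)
Your final step gets the comparison backwards. With the uniform bound $|J_y/J|\le\tan(2r)\approx 2r$, Gronwall gives the exponent $\tan(2r)\,\mathcal L$, and the arc-length lemma turns this into about $4r^2\cosh(2r)/\cos(\phi+2\cdot10^{-6})$. That is \emph{larger} than the stated $2.06\,r^2\cosh^2(2r)/\cos^2(\phi+2\cdot10^{-6})$, by a factor of roughly $1.94\cos(\phi+2\cdot10^{-6})$, which is at least about $1.7$ for $\phi\le0.5$. So the bound your argument produces is weaker than the lemma and does not imply it. The squared expression is not a cruder form of yours, and no Cauchy--Schwarz is involved. The paper uses that the coefficient starts at zero and grows at most linearly, instead of being of size $2r$ all along: $|J_y/J|\le\tan y<1.03\,y<1.03\,s$. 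This holds because the Fermi coordinate $y$ of the chord vanishes at $L(0)$ and is $1$-Lipschitz along $L$. Integrating gives $\int_0^{\mathcal L}1.03\,s\,ds=1.03\,\mathcal L^2/2$. Squaring $\mathcal L\le 2r\cosh(2r)/\cos(\phi+2\cdot10^{-6})$ then yields exactly $2.06\,r^2\cosh^2(2r)/\cos^2(\phi+2\cdot10^{-6})$.

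Step 3 also fails, and in the frame you chose it cannot be repaired. There $\theta$ is the angle with $\partial_{y'}$, the normals to $\gamma$. Since $L$ turns by at most $\phi$ between the endpoints of $\gamma$, its tangent stays within $\phi+2\cdot10^{-6}$ of the lines $y'=\mathrm{const}$; this is the estimate in the proof of the arc-length lemma. Hence $|\theta(0)|\ge\pi/2-\phi-2\cdot10^{-6}$. Concretely, if $L$ is a single geodesic segment leaving $S$ orthogonally, then $x=\phi=0$ and $\gamma=L$, so $\theta\equiv\pi/2$ while the right-hand side is $0$. Thus the normals to $S$ and to $\gamma$ at $L(0)$ are at least $\pi/2-x-\phi-2\cdot10^{-6}$ apart, not $O(10^{-6})$. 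In any case, no additive error could be absorbed into a bound of the form $(x+\phi)e^{(\cdot)}$. The inequality only makes sense with $\theta$ measured from the geodesics normal to $S$: the family that defines $x$, and the one used afterwards in $dp/ds=\sin\theta/J$.

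In Fermi coordinates along $S$ your argument does go through. First, $\theta(0)=x$ exactly, and your Gronwall-with-jumps handles the break points more carefully than the paper's ``divide by $|\theta|$ and integrate''. Second, $D$ is centered on $S$, so $|y|\le r$, hence $|J_y/J|\le\tan r\le1.03\,r$. The exponent is then at most $1.03\,r\,\mathcal L\le 2.06\,r^2\cosh(2r)/\cos(\phi+2\cdot10^{-6})$. This is dominated by the stated expression because $\cosh(2r)/\cos(\phi+2\cdot10^{-6})\ge1$. The bound $y\le s$ is unavailable in this frame, since $L(0)$ need not lie on $S$, but you do not need it.
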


\begin{proof}
The differential evolution of the angle deviation $\theta(s)$ relative to the normal is given
by the formula $\frac{d\theta(s)}{ds} = -\frac{J_y}{J} \sin(\theta)$.
Here is how this formula can be deduced.
Calculating the Christoffel symbols for the Riemannian metric, we discover that one of two geodesic equations is 
${d^2y\over ds^2}=JJ_y({dp\over ds})^2$.
Further, for a unit speed geodesic $1=({dy\over ds})^2+ J^2({dp\over ds})^2$, and
$\cos\theta(s)={dy\over ds}$. Therefore,
$\sin\theta(s)=J{dp\over ds}$. Now,
differentiating both sides of $\cos\theta(s)={dy\over ds}$, and using the mentioned geodesic equation, we obtain
$-\sin\theta(s){d\theta\over ds}=JJ_y({dp\over ds})^2={J_y\over J}\sin^2\theta(s)$. Dividing both sides by $-\sin\theta(s)$
we obtain the desired equality.

Using the comparison bound $|J_y/J| \le \tan(y)<1.03y<1.03s$ plus the inequality $\sin \theta\leq \theta$ for positive $\theta$, we obtain the following differential inequality: $\frac{d|\theta(s)|}{ds}\leq 1.03s|\theta|$. Dividing both sides by $|\theta|$, integrating and exponentiating, we obtain
$|\theta(s)|\leq (x+\phi)\exp(1.03{\mathcal{L}^2\over 2})$. Now
we substitute the bound for $\mathcal{L}$ from Lemma 12 to reach the desired result.
\end{proof}


Now we can prove Lemma 11.

\begin{proof}
The change of $p$-coordinate along $S$ is given by $\frac{dp}{ds} = \frac{\sin \theta}{J}$. Using $J \ge \cos(2r)$ and $\sin \theta \le \theta$:
\[ P = \int_0^\mathcal{L} \frac{\sin \theta(s)}{J} ds \le \int_0^\mathcal{L} \frac{|\theta(s)|}{\cos(2r)} ds \le \mathcal{L} \frac{|\theta_{max}|}{\cos(2r)} \]
The substitution of Lemmas 12 and 13 yields $P \le C \cdot r(x + \phi)$ where the constant $C$ is:
\[ C = \frac{2 \cosh(2r)}{\cos (\phi+2*10^{-6})\cos(2r)} \exp \left( \frac{2.06r^2 \cosh^2(2r)}{\cos^2 (\phi+2*10^{-6})} \right) \]
Evaluating at the threshold $r = 0.0005, \phi = 0.5$ gives $C<3$, which is strictly less than 20.
\end{proof}
\forgotten
\par\noindent
{\bf Acknowledgements:} The work of one of the authors (A.N.) was partially supported by his NSERC Discovery Grant. He worked on this project during his visits at the SLMath and Stanford University in the fall semester of 2024 and at the Hausdorff Institute of Mathematics in the winter semester of 2025. He would like to thank these institutions for their hospitality. D.K. was supported
by the European Research Council (ERC) Grant No. 101045750 (HodgeGeoComb). He is grateful to Sergey Avvakumov for communicating the planar version of the problem. 

While at Stanford, A.N. learned that Henry Bosch was also working on the same problem and in Summer 2024 announced a result that looks similar to Theorem 1 in the present paper but proved using a completely different method.
But Bosch's paper [B] appeared on arXiv only approximately a month and a half
after the first version of this paper. Henry's result
is about stationary geodesic nets
in (Euclidean) $\mathbb{R}^n$ for all $n$ and not only $n=2$ (as in the present paper). However, his result holds only for geodesic nets that are sufficiently close to a multiplicity k straight line.
(Given a sequence of stationary geodesic nets $N_i$ converging to a multiplicity $k$ straight line $l$, Henry's upper bound will hold only for all sufficiently large $i$.)
His result is an upper bound for the number of branch points of stationary geodesic networks sufficiently close to $kl$, and for $n=2$ this upper bound is $(10k)^{k^2}$, which is the same type of growth as in our Theorem 1 yet with better numerical constants.

Henry Bosch is a Ph.D. student at Stanford working under supervision of Otis Chodosh, and Chodosh observed that Henry's proof can be combined with a compactness argument that for $n=2$ extends Henry's result for geodesic nets in the Euclidean plane to closed geodesic nets on surfaces ([C]). However, this is a non-constructive argument, and the knowledge of a specific upper bound for the number of branch points for a class of geodesic nets in the Euclidean plane does not lead to a specific upper bound for the number of branch points for a class of closed geodesic nets on surfaces.


\vskip 0.5truecm
\par\noindent
I.F. IMPA, Estrada Dona Castorina, 110, Jardim Botânico, Rio de Janeiro, RJ, CEP 22460-320, Brazil.
\par\noindent
D.G. Department of Mathematics, University of Toronto, Bahen Center, Toronto, ON, M5S2E4, Canada.
\par\noindent
D.K. Sorbonne Université - IMJ-PRG
Tour 15-16 (Tower 15-16)
4, place Jussieu
75252 Paris Cedex 05, France.
\par\noindent
A.N. Department of Mathematics, University of Toronto, Bahen Center, Toronto, ON, M5S2E4, Canada.

\end{document}